\newcommand{\eS}{\Sigma}
\newcommand{\R}{\mathbb{R}}
\newcommand{\E}{\mathbb{E}}
\newcommand{\N}{\mathbb{N}}
\newcommand{\Max}{{\sf Max}}
\newcommand{\RalfSubsection}[1]{\subsection{\sc #1}}
\theoremstyle{plain}
\newtheorem{prop}{Proposition}[section]
\newtheorem{lemma}[prop]{Lemma}
\newtheorem{corollary}[prop]{Corollary}
\newtheorem{Thm}[prop]{Theorem}
\theoremstyle{definition}
\newtheorem{rem}[prop]{Remark}
\tikzstyle{point}=[circle,fill=black,scale=.4]
\begin{document}

\title
	[Affine Maps Between CAT(0) Spaces]
	{Affine Maps Between CAT(0) Spaces}

\author{Hanna Bennett$^\dagger,$ Christopher Mooney$^\ddagger$, and Ralf Spatzier$^\ast$}

\thanks{$^{ \dagger}$ Supported in part by an EAF grant for the  IBL Center at the University of Michigan}

\thanks{$^{\ddagger}$ Supported in part by the NSF EMSW21 grant RTG-0602191 and  a Caterpillar Fellowship}

\thanks{$^{\ast }$ Supported in part by the NSF grants DMS-0906085 and DMS-1307164}

\begin{abstract}
We study affine maps between CAT(0) spaces with geometric actions, and show that they essentially split
as products of  dilations and linear maps (on the Euclidean factor).
This extends known results from the Riemannian case.
Furthermore, we prove a splitting lemma for the Tits boundary of a CAT(0) space with geometric action,
a variant of a splitting lemma for geodesically complete CAT(1) spaces by Lytchak.
\end{abstract}

\maketitle

\section{Introduction}
Let $X$ and $Y$ be geodesic spaces, and $f\colon X \mapsto Y$ a map.
Recall that a \textit{geodesic space} is a metric space in which every pair of points is joined by a path
of shortest length, called a \textit{geodesic}.  
We will always parametrize geodesics by arc length.
We call $f$ {\em affine} if $f$ maps geodesics $\gamma$ in $X$ to geodesics in $Y$,
and $f$ rescales $\gamma$ with constant speed $\rho(\gamma)$, which a priori depends on the geodesic $\gamma$. We call $\rho$ the \textit{rescaling function}, and say that an affine map is a {\em dilation} if the rescaling function is constant. In this paper we classify affine maps between CAT(0) spaces.
We will not touch on the much more difficult question of the extent to which the set of geodesics determines the metric.
Matveev has obtained strong positive results, especially for closed Riemannian manifolds with negative curvature \cite{Matveev2003}.

If $X$ and $Y$ are Riemannian, the answer to our problem  is classical  \cite{Kobayashi1963I}:  any self-affine map of an irreducible, non-Euclidean Riemannian manifold is a dilation. Remarkably, Lytchak  \cite{Lytchak2012},
following work by Ohta \cite{Ohta2003}, classified affine maps from a Riemannian manifold $X$ to any metric space $Y$ as dilations, as long
as $X$ is not a product or a higher rank symmetric space.  In the latter cases, they produce counterexamples by endowing
these spaces with suitable Finsler metrics.

Lytchak and Schr\"{o}der \cite{Lytchak-Schroeder}, and later Hitzelberger and Lytchak \cite{Hitzelberger2007} further investigated the case of real-valued
functions on a CAT($\kappa$) metric space, and obtained severe restrictions.  
Understanding affine maps has been important in several applications, and has connections to superrigidity problems.  We refer the reader to Ohta \cite{Ohta2003} for a brief discussion.  

To state our main result, we recall a fundamental result about splittings of metric spaces. Foertsch and Lytchak \cite[Theorem 1.1]{Foertsch2008} prove that such splittings exist and are essentially unique when $X$ is a geodesic metric space of finite topological dimension (or, more generally, finite affine rank), i.e. under those conditions $X=X_1\times ...\times X_n\times\E^d$ such that each $X_i\neq\R$ and is irreducible. In particular their result applies to CAT(0) spaces with a geometric action, that is, the action is properly discontinuous by isometries with compact
quotient. This case was proved earlier by Caprace and Monod \cite{CapraceMonod2009}.  

Recall that a metric space is called \textit{proper} if all closed balls are compact.

\begin{Thm}[Main Theorem]
\label{main}
Let $X$ be a geodesically complete proper CAT(0) space admitting a geometric group action, and $X=X_1\times ...\times X_n\times\E^d$
be a factorization into irreducible factors such that no $X_i=\R$.  Let $Y$ also be a CAT(0) space,
and $f\colon X\to Y$ be a continuous affine map.  Then the image of $f$ is convex and hence CAT(0) and splits as $f(X_1)\times ... \times f(X_n)\times f(\E^d)$, where $f(\E^d)$ is the Euclidean factor of the image.  The restriction
of $f$ to every factor $X_i$ is a dilation (possibly with rescaling
constant zero) and the restriction to $\E^d$ is a standard affine map between Euclidean spaces. 
\end{Thm}

Combining the Main Theorem with work of Bosch\'{e} \cite{Bosche}, we get two applications
to self maps of CAT(0) spaces.

\begin{corollary}[Self-Affine Maps]
\label{thm-selfmaps}
Let $X$ be a proper geodesically complete CAT(0) space with geometric action and $f\colon X\to X$ be a strictly contracting continuous affine map.
Then $X$ is flat.
\end{corollary}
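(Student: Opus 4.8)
The plan is to run $f$ through the Main Theorem in order to see it as a product of dilations on the non-Euclidean factors together with a Euclidean affine map, and then to apply Bosch\'e's rigidity theorem for dilations of cocompact CAT(0) spaces to eliminate the non-Euclidean factors.

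First I would fix the irreducible decomposition $X=X_1\times\cdots\times X_n\times\E^d$ with no $X_i=\R$ and apply the Main Theorem to the continuous affine map $f\colon X\to X$. This yields $f(X)=f(X_1)\times\cdots\times f(X_n)\times f(\E^d)$, that $f$ restricted to each $X_i$ is a dilation with some rescaling constant $\rho_i$ (a priori possibly zero), and that $f$ restricted to $\E^d$ is a standard affine self-map of $\E^d$. Since $f$ is strictly contracting, each geodesic of $X$ is rescaled by a factor strictly between $0$ and $1$; applying this to a geodesic contained in a single factor $X_i$ gives $0<\rho_i<1$ for every $i$, and likewise the linear part of $f|_{\E^d}$ has operator norm less than one. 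As $X$ is proper, hence complete, $f$ also has a unique fixed point $p$ by the Banach contraction principle; I will base the discussion at $p$, though this is mainly for convenience.

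The next step is to promote each $f|_{X_i}$ to a genuine self-dilation of $X_i$. Here one observes that $f(X_i)$ is isometric to the rescaled space $\rho_iX_i$, hence is an irreducible non-Euclidean convex subspace of $X$, and that $f(X)$ --- a product of such pieces with a Euclidean factor --- is again a proper geodesically complete CAT(0) space admitting a geometric action. By essential uniqueness of the irreducible splitting (Foertsch--Lytchak \cite{Foertsch2008}; cf.\ the introduction), $f$ must permute the isometry types of the non-Euclidean factors, i.e.\ there is a permutation $\tau\in S_n$ so that, under the canonical identifications, $f$ carries the $X_i$-slice into the $X_{\tau(i)}$-slice. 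Replacing $f$ by the power $f^{m}$, with $m$ the order of $\tau$, I may assume $\tau=\id$, so that $f$ restricts to a self-map $f_i\colon X_i\to X_i$ which is a dilation with rescaling constant equal to the product of the $\rho_j$ over the $\tau$-cycle through $i$, still a number in $(0,1)$. The hard part will be this middle step: one must check with care that the image splitting $f(X)=\prod_if(X_i)\times f(\E^d)$ is compatible with the intrinsic splitting of $X$ well enough to make some power of $f$ genuinely self-map each irreducible factor, which is exactly where essential uniqueness of splittings and the cocompactness of the convex subspace $f(X)$ must be invoked; it is conceivable that a cleaner route passes instead through the fixed point $p$ and the induced map on the space of directions $\Sigma_pX$.

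Finally, each $X_i$ is itself proper and geodesically complete (both properties pass to direct factors), and by the splitting theorem for geometric actions on products (cf.\ the introduction) a finite-index subgroup of the given group acts geometrically on $X_i$. Thus $f_i$ is a non-isometric dilation of a proper geodesically complete CAT(0) space carrying a geometric action, so Bosch\'e's theorem \cite{Bosche} (rigidity of dilations of cocompact CAT(0) spaces) forces $X_i$ to be isometric to a Euclidean space. But an irreducible Euclidean CAT(0) space must be $\R$, contradicting $X_i\neq\R$. Hence $n=0$ and $X=\E^d$, which is flat.
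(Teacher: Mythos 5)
Your overall strategy (reduce to dilations on the irreducible factors via the Main Theorem, then quote a rigidity theorem of Bosch\'e) misses the actual content of the paper's argument, and the step you delegate to Bosch\'e is not available in the form you use it. The only statement of Bosch\'e that the paper provides is Theorem~\ref{bosche}: a geodesically complete proper CAT(0) space with a geometric action whose \emph{Tits boundary is compact} is flat. You instead invoke a ``rigidity of dilations of cocompact CAT(0) spaces'' asserting that a non-isometric dilation forces the space to be Euclidean. No such statement appears in the paper, and as you use it, it cannot be taken for granted: your factor maps $f_i$ are not surjective, and the paper's closing remark observes that a tree (proper, geodesically complete, with geometric action) admits a non-surjective dilation with constant $\alpha>1$ onto a subtree, so ``non-isometric dilation implies flat'' is false for non-surjective dilations in general. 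A version restricted to strict contractions is precisely Corollary~\ref{thm-selfmaps} for an irreducible factor, so citing it is circular. The missing bridge, which is the heart of the paper's proof, is the compactness of $\partial_{Tits}X$: a dilation induces an isometry of the Tits boundary while strictly shrinking distances in the interior; if $\partial_{Tits}X$ were non-compact, Lemma~\ref{le-wide_sequence} gives $M+1$ points pairwise at Tits distance $\ge\lambda$ (where $M$ bounds the cardinality of $(1,\lambda)$-wide sets), one realizes the angles at finitely many interior points, and applying a high power $f^k$ produces a $(1,\lambda)$-wide set of cardinality $M+1$, a contradiction; only then does Theorem~\ref{bosche} enter. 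None of this appears in your proposal.

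There are secondary gaps as well. Even granting a genuine dilation-rigidity statement for spaces with geometric actions, applying it factorwise requires a geometric action on $X_i$, and your claim that a finite-index subgroup acts geometrically on each factor is false in general: irreducible lattices in products act geometrically on the product while projecting non-properly to each factor (this is exactly why the paper proves Lemma~\ref{density in factors} about round spheres in factors rather than asserting geometric actions on them, and why its proof applies Theorem~\ref{bosche} to $X$ itself after establishing compactness of the boundary). The step you yourself flag as ``the hard part''---aligning the splitting of the convex image $f(X)$ with the intrinsic splitting of $X$ so that a power of $f$ maps each irreducible slice into a parallel slice---is left open. Finally, ``strictly contracting'' gives $\rho_i<1$ but not $\rho_i>0$ (the Main Theorem explicitly allows rescaling constant zero), so your permutation argument silently excludes the collapsed case without justification.
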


\begin{corollary}[Self-Affine Homeomorphisms]
\label{thm-selfhomeo}
Let $X$ be a proper geodesically complete CAT(0) space admitting a geometric group action and $f:X\to X$ be an
affine homeomorphism.  Assume further that $X$ has no Euclidean factor.
\begin{enumerate}
\item If $f$ preserves the factors of $X$ then $f$ is an isometry.
\item If $X$ is irreducible, then $f$ is an isometry.
\item Some power of $f$ is an isometry.
\item The group of isometries has finite index in the group of affine homeomorphisms.
\end{enumerate}
\end{corollary}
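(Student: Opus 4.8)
The plan is to read all four assertions off from the Main Theorem (Theorem~\ref{main}) and the Self-Affine Maps corollary (Corollary~\ref{thm-selfmaps}), using the essential uniqueness of the irreducible splitting for the bookkeeping. Write $\mathrm{Aff}(X)$ for the group of affine self-homeomorphisms of $X$, and fix the decomposition $X=X_1\times\cdots\times X_n$, each $X_i$ irreducible and $\neq\R$, with no Euclidean factor. First I would show that every $f\in\mathrm{Aff}(X)$ induces a permutation of the factors. Applying Theorem~\ref{main} to the continuous affine map $f$ gives $f(X)=f(X_1)\times\cdots\times f(X_n)$ with $f$ splitting as a product of dilations $f_i$, of ratios $\rho_i\geq 0$. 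Since $f$ is injective and each $X_i$ is nontrivial, no $\rho_i$ can vanish --- a constant $f_i$ would destroy injectivity --- so each $f_i$ is a surjective homothety of ratio $\rho_i>0$ onto $f(X_i)$, and hence $f(X_i)$ is irreducible, nontrivial and not isometric to $\R$. Thus the homeomorphism $f$ exhibits $X$ as a product of irreducible non-$\R$ factors with trivial Euclidean part, and by the essential uniqueness of such splittings (Foertsch--Lytchak \cite{Foertsch2008}, Caprace--Monod \cite{CapraceMonod2009}) this product is, after relabeling, the original one: there is $\sigma_f\in S_n$ with $f(X_i)=X_{\sigma_f(i)}$, the latter homothetic to $X_i$ with ratio $\rho_i$. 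From $f(g(X_i))=X_{\sigma_f(\sigma_g(i))}$ one sees that $f\mapsto\sigma_f$ is a homomorphism $\mathrm{Aff}(X)\to S_n$ whose kernel consists exactly of the factor-preserving affine homeomorphisms.

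Next I would dispose of the factor-preserving case, which already gives (1) and (2). If $\sigma_f=\id$ then $f=f_1\times\cdots\times f_n$ with each $f_i\colon X_i\to X_i$ a surjective homothety of ratio $\rho_i$. Each $X_i$ is proper, geodesically complete, CAT(0), and --- this being the crucial point --- again admits a geometric action. If some $\rho_i<1$, then $f_i$ is a strictly contracting continuous affine self-map of $X_i$, so Corollary~\ref{thm-selfmaps} forces $X_i$ to be flat; but an irreducible flat CAT(0) space is a point or $\R$, and $X_i$ is neither, a contradiction. The inverse $f^{-1}$ is again affine --- in a CAT(0) space the geodesic joining two points is unique, so $f$ carries geodesic segments onto geodesic segments and $f^{-1}$ inherits this property --- and it is factor-preserving, with restrictions $(f_i)^{-1}$ of ratio $1/\rho_i$; so if some $\rho_i>1$ the same argument applied to $f^{-1}$ again forces $X_i$ flat. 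Hence $\rho_i=1$ for all $i$, every $f_i$ is an isometry, and $f$ is an isometry. This is (1); and (2) is the special case $n=1$.

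The remaining two parts are then formal. For arbitrary $f\in\mathrm{Aff}(X)$, let $m$ be the order of $\sigma_f$ in $S_n$; then $\sigma_{f^m}=\id$, so $f^m$ is factor-preserving, hence an isometry by (1), proving (3). For (4), the homomorphism $f\mapsto\sigma_f$ has image inside the finite group $S_n$, so its kernel has finite index in $\mathrm{Aff}(X)$; and since the kernel is contained in $\Isom(X)$ by (1), a fortiori $\Isom(X)$ has finite index in $\mathrm{Aff}(X)$.

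The substantive content is already imported --- the splitting of affine maps (Theorem~\ref{main}) and the dichotomy that a strict contraction forces flatness (Corollary~\ref{thm-selfmaps}, via Bosch\'{e} \cite{Bosche}) --- so the only point in the argument above at which something genuinely must be supplied is that each de Rham factor $X_i$ again carries a geometric action, which for proper geodesically complete CAT(0) spaces can be extracted from the structure theory of Caprace--Monod \cite{CapraceMonod2009} (and is needed precisely so that Corollary~\ref{thm-selfmaps} applies to $X_i$). The remaining verifications --- that $f^{-1}$ is affine, and that $\sigma_f$ is well defined and multiplicative, the latter resting on the canonicity of the non-$\R$ irreducible factors inside $X$ --- are routine, and I would expect no real obstacle beyond the one just named.
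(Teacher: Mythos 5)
Your proposal is correct and takes essentially the same route as the paper: apply Theorem~\ref{main} to split $f$ into dilations of the irreducible factors, use uniqueness of the splitting to get a permutation of factors, rule out rescaling constants $\neq 1$ in the factor-preserving case via Corollary~\ref{thm-selfmaps}, and obtain (3) and (4) from the induced homomorphism to the symmetric group $S_n$. You simply spell out details the paper leaves implicit, notably applying Corollary~\ref{thm-selfmaps} to $f^{-1}$ when a rescaling constant exceeds $1$ and the point that each factor again carries the hypotheses (in particular a geometric action) needed to invoke that corollary.
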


One of the key tools in the proof of the Main Theorem is the following Splitting Lemma, which is
an analogue to Lytchak's result \cite[Proposition~4.2]{Lytchak2005} for splittings of geodesically complete CAT(1) spaces.
We will work with boundaries of CAT(0) spaces admitting geometric actions.  Endowed with the Tits metric,
such boundaries are always CAT(1) spaces but often not geodesically complete.  See, for instance,
the Croke-Kleiner examples \cite{Croke2002}.
A subset $P$ of a CAT(1) space $Y$ is called \textit{$\pi$-convex} if whenever $x,y\in P$ such that $d(x,y)<\pi$,
then the unique geodesic joining them is also in $P$.  Given $x,y\in Y$, $x$ and $y$ are called \textit{antipodal}
or \textit{antipodes} if $d(x,y)\ge\pi$.
A nonempty subset $P\subset Y$ is \textit{involutive} if it contains all of its antipodes.

\begin{Thm}[Splitting Lemma]
\label{spatziers lemma}
Let $X$ be a proper CAT(0) space admitting a geometric group action.  Suppose
$\partial X$ contains a proper subset $P$ that is $\pi$-convex and involutive and closed in the
cone topology.  Then $\partial X$ splits as the spherical join $\partial X=P\ast P^\perp$
where $P^\perp$ is the set of points that have Tits distance exactly $\pi/2$ from all points in $P$.
\end{Thm}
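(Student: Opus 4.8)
The plan is to follow the projection-and-prolongation strategy behind Lytchak's splitting lemma \cite[Prop.~4.2]{Lytchak2005}, but to replace his standing assumption that the ambient CAT(1) space is geodesically complete---which genuinely fails for $\partial X$, as the Croke--Kleiner examples \cite{Croke2002} show---by facts coming from the cocompact action on $X$. Throughout, write $Y:=\partial X$ with the Tits metric $d$, a complete CAT(1) space. Since the Tits topology refines the cone topology, $P$ is also Tits-closed, hence complete; being complete and $\pi$-convex, it admits, for each $\xi\in Y$ with $d(\xi,P)<\pi/2$, a unique nearest point $\pi_P(\xi)\in P$, and by the first-variation formula the segment $[\pi_P(\xi),\xi]$ makes an angle $\ge\pi/2$ at $\pi_P(\xi)$ with every geodesic from $\pi_P(\xi)$ into $P$. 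Put $Q:=P^{\perp}$ and $\theta(\xi):=d(\xi,P)$.

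It suffices to prove: (a) $\theta(\xi)\le\pi/2$ for every $\xi\in Y$, and $\theta(\xi)=\pi/2$ forces $\xi\in Q$; and (b) whenever $0<\theta(\xi)<\pi/2$, the segment $[\pi_P(\xi),\xi]$ prolongs inside $Y$ to a segment of length exactly $\pi/2$. Indeed, once (b) holds, the obtuse-angle property at $p:=\pi_P(\xi)$ together with CAT(1)-comparison (the comparison triangle on $S^2$ has the true side lengths, and its angle at $\bar p$ is $\ge\pi/2$, so the spherical law of cosines forces $d(q,p')\ge\pi/2$ for every $p'\in P$, where $q$ is the far endpoint of the prolongation); hence $\theta(q)=\pi/2$ and so $q\in Q$ by (a), while uniqueness of $\pi_P$ makes these orthogonal $P$-to-$Q$ segments pairwise disjoint off $P$. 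Then every $\xi\in Y$ lies on a geodesic from $P$ to $Q$ of length $\theta(\xi)\le\pi/2$; since $d\equiv\pi/2$ on $P\times Q$ by definition of $Q$, the standard characterization of spherical joins yields an isometry $Y=P\ast Q$, with $Q\ne\emptyset$ because $P\subsetneq Y$.

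The content is thus concentrated in (a) and the prolongation in (b), and this is exactly where the cocompact action replaces geodesic completeness of $\partial X$. Although $\partial X$ need not be geodesically complete, cocompactness forces every point of $\partial X$ to have a Tits-antipode and, more to the point, lets one prolong a Tits segment of length $<\pi$ that is orthogonal to $P$ up to length $\pi/2$: one realizes the segment by a configuration of rays in $X$, translates it by a group element so that a chosen interior point lies near a basepoint, and then extracts a limiting ray using properness of $X$. Involutivity enters through the fact that a point $p\in P$ has a Tits-antipode, which must then lie in $P$; combined with repeated CAT(1)-comparison along segments orthogonal to $P$ and the prolongation just described, this yields the bound $\theta\le\pi/2$ together with the equality clause of (a). I expect the prolongation statement---extracting, from cocompactness of the $\Gamma$-action alone, the continuation of an orthogonal segment up to length $\pi/2$ inside $\partial X$---to be the main obstacle: it forces one to translate carefully between Tits geometry on $\partial X$ and flat configurations in $X$ and to run a compactness argument, and it is the genuine new ingredient that distinguishes this Splitting Lemma from its geodesically complete CAT(1) antecedent. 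The CAT(1) projection theory and the comparison computations producing the join metric are, by contrast, routine.
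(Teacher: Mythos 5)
Your outline reproduces Lytchak's projection--prolongation scheme, but the two steps you label (a) and (b) are not side conditions: under the failure of geodesic completeness of $\partial_{Tits}X$ they \emph{are} the theorem, and your proposal gives no actual argument for them. The prolongation claim (b) --- that a Tits segment meeting $P$ orthogonally extends inside $\partial X$ to length exactly $\pi/2$ --- is a statement one only knows \emph{after} the join decomposition $\partial X=P\ast P^{\perp}$ is established; nothing about a geometric action gives prolongation of Tits geodesics in general (this is precisely what the Croke--Kleiner examples destroy). The mechanism you sketch does not produce it: elements of the group act on $\partial_{Tits}X$ by isometries, so translating a configuration ``near a basepoint'' changes nothing about whether the segment can be continued; and extracting limit rays in $X$ by properness only yields cone-topology limits, for which the Tits metric is merely lower semicontinuous, so a limit of rays does not give a Tits-geodesic continuation. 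The same problem infects (a): your justification (``antipodes plus repeated comparison plus the prolongation just described'') again leans on the unproved prolongation. So the proposal, as written, has a genuine gap exactly at the point you yourself identify as the main obstacle.

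For comparison, the paper does not prolong anything. It first proves a strengthened statement assuming only that $\partial X$ has a dense family of round spheres, which it gets from the geometric action via Kleiner's existence of a top-dimensional round sphere \cite{Kleiner1999}, the Balser--Lytchak antipode lemma \cite{Balser2005}, and Papasoglu--Swenson $\pi$-convergence \cite{Papasoglu2009}. Given $x\in\partial X$, one takes round spheres $K_n$ with points $x_n\to x$ in the cone topology; inside each $K_n$ the set $P\cap K_n$ is nonempty (antipodes plus involutivity), closed, involutive and $\pi$-convex, hence a subsphere, so $K_n=(P\cap K_n)\ast(P^{\perp}\cap K_n)$ by elementary spherical geometry, and $x_n$ lies on a segment $[y_n,z_n]$ of length $\pi/2$ with $y_n\in P$, $z_n\in P^{\perp}$. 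Compactness of $\partial X$ in the cone topology, closedness of $P$ and of $P^{\perp}$, and lower semicontinuity of $d_{Tits}$ then give $y\in P$, $z\in P^{\perp}$ with $d(y,x)+d(x,z)=\pi/2$, which yields the join. If you want to salvage your route, you would have to prove (a) and (b) by some such sphere-approximation argument anyway --- at which point the projection $\pi_P$ and the obtuse-angle comparison become unnecessary.
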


We note that our definition of the perpendicular set $P^\perp$ is different from Lytchak's $\textrm{Pol}(P)$, which
is the set of points that are at least $\pi/2$ from all points in $P$.

To prove the Splitting Lemma, we critically use the $\pi$-convergence theorem of
Papasoglu-Swenson \cite{Papasoglu2009} and the theorem of Kleiner \cite{Kleiner1999} that
the boundary $\partial X$ of $X$ contains isometrically embedded round
spheres of dimension equal to the geometric dimension
of the Tits boundary (cf. \S\ref{subsec:tools}).

We then apply the Splitting Lemma to prove the Main Theorem.
We first show that asymptotic geodesics are rescaled by the same constant.
Thus the rescaling function extends to the boundary $\partial X$.
If the rescaling function $\rho$ is not constant on $\partial X$, then let $P$ be the set of points
on which $\rho$ attains its maximum.
We show that $P$ is closed, $\pi$-convex, and involutive.
Therefore the Tits boundary splits off a factor.
We can then apply a theorem of Bridson-Haefliger \cite[Theorem~II.9.24]{Bridson1999}
to get that the underlying CAT(0) space splits as a product.

We do not know if our results extend to affine maps between CAT($\kappa$) spaces, or at least from CAT(0) to CAT($\kappa$) spaces.  
As we mentioned above, affine maps between irreducible metric spaces are not always dilations.
We also do not know if affine maps are always continuous.  This is certainly the case in many situations.

\subsection*{Acknowledgements}
We thank Dan Guralnik and Eric Swenson for their inspiring paper \cite{Guralnik2011}, and Russell Ricks and Ben Schmidt
for discussions of this paper.  The first two authors held postdoctoral fellowships at the University of Michigan, and are thankful for the excellent working environment provided.

\section{CAT(0) Spaces}

\RalfSubsection{Tools From CAT(0) Spaces} 
\label{subsec:tools}

In this section we review some definitions  and techniques in CAT(0) spaces.
For a more thorough treatment the reader should review \cite{Bridson1999}.
Let $X$ be a geodesic metric space. Given three distinct points $x,y,z\in X$, choose points $\overline x$, $\overline y$,
and $\overline z$ in the Euclidean plane $\E^2$ such that $d(\overline x,\overline y)=d(x,y)$,
$d(\overline x,\overline z)=d(x,z)$, and $d(\overline y,\overline z)=d(y,z)$.
We denote the resulting triangle in $\E^2$ by $\overline{\triangle}xyz$ and call it
a \textit{comparison triangle} for $\triangle xyz$ in $X$.
Choose any $p$ in the geodesic $[x,y]$ and $q$ in $[x,z]$ and
get corresponding points
$\overline p\in[\overline x,\overline y]$ and
$\overline q\in[\overline x,\overline z]$.
If for every choice of $p$ and $q$, we have 
$d(p,q)\le d(\overline p,\overline q)$, then $\triangle xyz$ is said to be
\textit{no fatter} than $\overline\triangle xyz$.  If every triangle in $X$
is no fatter than its comparison triangle in $\E^2$, then $X$ is called
\textit{CAT(0)}.

We will henceforth assume that our CAT(0) spaces are {\em geodesically complete} (i.e., that every geodesic segment extends to a geodesic line defined on $(-\infty, \infty)$) and that $X$ is proper (i.e., that closed balls are
compact).

Next we recall the definition of Alexandrov angles in CAT(0) spaces. If $x,y,z\in X$, then we denote by $\overline{\angle}_{x}(y,z)$ the corresponding
angle in the comparison triangle $\overline\triangle xyz$.  If
$\alpha$ and $\beta$ are the geodesics $[x,y]$ and $[x,z]$, then the
CAT(0) condition implies that
\begin{align}
\label{angle function}
	t\mapsto \overline\angle_x\bigl(\alpha(t),\beta(t)\bigr) \;\textrm{is a nondecreasing function}.
\end{align}
Its limit as $t\to 0$ is called the
\textit{Alexandrov angle} between $\alpha$ and $\beta$, denoted by
$\angle_x(y,z)$, or $\angle_x(\alpha,\beta)$.  The condition that
$\angle_x(y,z)\le\overline{\angle}_x(y,z)$ for every choice of $x,y,z \in X$
is equivalent to the CAT(0) property for $X$
\cite[Proposition~II.1.7(4)]{Bridson1999}.

Many geometric properties of nonpositively curved manifolds carry over to the
CAT(0) setting.  For instance, angle sums of triangles are bounded above by $\pi$. Furthermore we can define the  \textit{visual boundary} of $X$, denoted $\partial X$, as the set of equivalence
classes of geodesic rays in $X$. Two geodesic rays $\alpha$ and $\beta$
are equivalent, or asymptotic, if one lies in a tubular
neighborhood of the other.  Equivalently, if a basepoint $x_0\in X$ is fixed,
then $\partial X$ may be defined as the set of geodesic rays emanating from $x_0$ \cite[\S II.8]{Bridson1999}.
We think of $\partial X$  as ``attached to $X$ at infinity'' and it captures the
notion of infinity of $X$.

We endow $\overline{X}=X\cup\partial X$ with a topology by
identifying points in $X$ with geodesics emanating from a common basepoint $x_0$
and points in $\partial X$ with geodesic rays emanating from the same point.
Then a sequence of points $(x_n)\subset\overline{X}$ converges to a point
$y\in\overline{X}$ if the corresponding geodesics converge uniformly on compact sets.
The subspace topology on $\partial X$ is called the \textit{cone topology}.
When $X$ is proper, $\overline{X}$ is a compactification for $X$ and $\partial X$ is compact.
In this topology geodesic rays are close if they track a long time before diverging.

A second topology on $\partial X$ comes from a metric.
Given a pair of points
$\zeta,\eta\in\partial X$, the \textit{Tits angle} between them,
denoted $\angle_{Tits}(\zeta,\eta)$,  is defined as the supremum of
Alexandrov angles $\angle_x(\zeta,\eta)$ between the geodesic rays
$\alpha$ and $\beta$ emanating from $x$ going out to $\zeta$ and $\eta$ as $x$
ranges over $X$.  If $x$ is fixed, we know from \cite[Proposition~II.9.8(1)]{Bridson1999} that
\begin{align}
\label{titsangles}
	\lim_{t\to\infty}\overline{\angle}_x\bigl(\alpha(t),\beta(t)\bigr)
		=
	\angle_{Tits}(\zeta,\eta)
\end{align}
where $\overline{\angle}_x\bigl(\alpha(t),\beta(t)\bigr)$ is nondecreasing
by (\ref{angle function}).

The \textit{Tits metric} $d_{Tits}$ is the corresponding length metric (possibly taking the value infinity).
This metric induces the \textit{Tits topology}, which is finer than the cone topology.
The boundary with the Tits topology is called the \textit{Tits boundary} and denote it by $\partial_{Tits}X$.
It is well-known that $d_{Tits}$ is lower semicontinuous in the cone topology
\cite[Proposition~II.9.5(2)]{Bridson1999}.  Also, when $\angle_{Tits}(\zeta,\eta)<\pi$,
then $d_{Tits}(\zeta,\eta)=\angle_{Tits}(\zeta,\eta)$ \cite[Remark~II.9.19(2)]{Bridson1999}
and $d_{Tits}(\zeta,\eta)\ge\pi$ iff $\angle_{Tits}(\zeta,\eta)=\pi$.

Amazingly, the Tits boundary has an elegant geometry.  Given any real
$\kappa$, a geodesic space is called CAT($\kappa$) if triangles are
no fatter than comparison triangles in a simply connected Riemannian manifold
with constant curvature $\kappa$.
The following theorem is due to Gromov in the manifold setting \cite{Ballmann1985}, and to Bridson
and Haefliger in full generality.

\begin{Thm}\cite[Theorem~II.9.13]{Bridson1999}
Let $X$ be a complete CAT(0) space.  Then its Tits boundary is a complete CAT(1) space.
In particular, any two points of finite Tits distance are connected by a Tits geodesic.
\end{Thm}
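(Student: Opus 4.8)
The plan is to deduce the theorem from the Euclidean cone characterisation of CAT(1) spaces (Berestovskii; cf.\ \cite{Bridson1999}): a complete metric space $Z$ is CAT(1), and geodesic between points at distance $<\pi$, precisely when the Euclidean cone $\mathrm{Cone}_0(Z)$ is CAT(0). Here $\mathrm{Cone}_0(\partial_{Tits}X)$ carries the metric $\sqrt{r^2+s^2-2rs\cos\angle_{Tits}(\xi,\eta)}$, using that $\min(d_{Tits},\pi)=\angle_{Tits}$ by the facts recalled above; and $\partial_{Tits}X$ is complete because, by properness, a $d_{Tits}$-Cauchy sequence cone-subconverges and then $d_{Tits}$-converges to the same limit by lower semicontinuity of $d_{Tits}$ (the general complete case is treated in \cite{Bridson1999}). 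So everything reduces to proving that $\mathrm{Cone}_0(\partial_{Tits}X)$ is CAT(0).

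I would realise this cone inside an asymptotic cone of $X$. Fix $x_0\in X$, a nonprincipal ultrafilter $\omega$, and scales $t_n\to\infty$, and let $X_\omega$ be the ultralimit of $(X,\tfrac1{t_n}d,x_0)$; since rescaling preserves the CAT(0) condition and ultralimits of CAT(0) spaces are complete CAT(0) spaces, $X_\omega$ is a complete CAT(0) space. Writing $c_\xi$ for the ray from $x_0$ to $\xi$, define $\Psi\colon\mathrm{Cone}_0(\partial_{Tits}X)\to X_\omega$ by $\Psi(r,\xi)=[(c_\xi(rt_n))_n]$ and $\Psi(0,\cdot)=x_0$. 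Applying the Euclidean law of cosines in $\overline{\triangle}(x_0,c_\xi(rt_n),c_\eta(st_n))$ and using that $\overline{\angle}_{x_0}(c_\xi(rt_n),c_\eta(st_n))\to\angle_{Tits}(\xi,\eta)$ — which follows from (\ref{titsangles}) together with the monotonicity of comparison angles, by squeezing between the equal-radius comparison angles at radii $\min(r,s)\,t_n$ and $\max(r,s)\,t_n$ — one obtains $d_{X_\omega}(\Psi(r,\xi),\Psi(s,\eta))=\sqrt{r^2+s^2-2rs\cos\angle_{Tits}(\xi,\eta)}$, so that $\Psi$ is an isometric embedding; its image is closed since $\mathrm{Cone}_0(\partial_{Tits}X)$ is complete.

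It then suffices to show that $\im\Psi$ is convex in $X_\omega$: a complete convex subset of a CAT(0) space is CAT(0), so $\mathrm{Cone}_0(\partial_{Tits}X)$ would be CAT(0), and Berestovskii's theorem would conclude that $\partial_{Tits}X$ — its unit sphere with the induced length metric, which is exactly $d_{Tits}$ — is CAT(1). A Tits geodesic between $\xi,\eta$ at angle $<\pi$ is then obtained by radially projecting the (unique) $X_\omega$-geodesic from $\Psi(1,\xi)$ to $\Psi(1,\eta)$, which misses the cone point $x_0$ because its length $2\sin(\tfrac12\angle_{Tits}(\xi,\eta))$ is $<2$; a pair at finite Tits distance $\ge\pi$ is joined by concatenating two such geodesics through a common antipode.

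The real obstacle — essentially the whole content of the theorem — is the convexity of $\im\Psi$, equivalently the claim that the $X_\omega$-midpoint of $\Psi(1,\xi)$ and $\Psi(1,\eta)$ again lies in $\im\Psi$. That midpoint is $[(m_{t_n})_n]$, where $m_t$ is the midpoint of $[c_\xi(t),c_\eta(t)]$ in $X$; a CAT(0) computation comparing with the isosceles comparison triangle gives $d(x_0,m_t)=t\cos(\tfrac12\overline{\angle}_{x_0}(c_\xi(t),c_\eta(t)))$, so the radial coordinate has the expected limit, and what remains is to show that the directions of the $m_t$ at $x_0$ point, asymptotically, toward a genuine boundary point $\zeta$ with $\angle_{Tits}(\xi,\zeta)=\angle_{Tits}(\eta,\zeta)=\tfrac12\angle_{Tits}(\xi,\eta)$. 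Producing $\zeta\in\partial X$ uses the geodesic completeness of $X$ assumed throughout the paper; pinning down its Tits distances to $\xi$ and $\eta$ is the delicate point, since cone-topology convergence of the approximating directions only yields the required inequalities in one direction via lower semicontinuity and has to be supplemented by the monotone comparison-angle estimates. Once this identification is available, the law-of-cosines identity, the radial-projection lemma for cones, and the Berestovskii equivalence are all routine.
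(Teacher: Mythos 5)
This statement is quoted background (Bridson--Haefliger, Theorem II.9.13); the paper gives no proof of it, so your proposal stands or falls on its own. As written it is not a proof: you yourself flag that the convexity of $\im\Psi$ in the asymptotic cone --- equivalently, that the ultralimit midpoint $[(m_{t_n})_n]$ comes from a genuine boundary point $\zeta$ with $\angle_{Tits}(\xi,\zeta)=\angle_{Tits}(\eta,\zeta)=\tfrac12\angle_{Tits}(\xi,\eta)$ --- is ``essentially the whole content of the theorem,'' and then you leave exactly that step to ``monotone comparison-angle estimates'' without carrying them out. That step is the heart of Bridson--Haefliger's argument (their Lemma II.9.14, which this paper invokes later in the proof that $\Max$ is $\pi$-convex): one shows via Alexandrov/sub-embedding comparisons that $\overline{\angle}_{x_0}(m_s,m_t)\to 0$ as $s,t\to\infty$, so the segments $[x_0,m_t]$ converge to a ray, and then pins down the two Tits angles to the limit point by a comparison argument in both directions (lower semicontinuity alone only gives ``$\le$''). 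Until that is done, the reduction to Berestovskii's cone criterion has not produced anything; the ultralimit packaging is optional window dressing around the same unproved midpoint statement. Note also that this step needs only completeness of $X$ (BH prove the theorem for complete, not proper or geodesically complete, spaces), whereas your construction of $\zeta$ leans on geodesic completeness and your completeness argument for $\partial_{Tits}X$ leans on properness, so even once filled in, your route proves a weaker statement than the one cited.

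Two smaller problems. First, the ``in particular'' clause: for $\pi\le d_{Tits}(\xi,\eta)<\infty$ you propose concatenating two geodesics through a common antipode, but such a concatenation has no reason to have length $d_{Tits}(\xi,\eta)$, and existence of geodesics at distances $\ge\pi$ does not follow formally from the CAT(1) property (complete CAT(1) spaces are only guaranteed geodesics below distance $\pi$); a separate limiting argument on approximating chains, or the treatment in \cite{Bridson1999}, is needed. Second, your isometric-embedding computation for $\Psi$ is fine (the two-parameter comparison angle does converge to $\angle_{Tits}$ by monotonicity in each variable), but be aware that the cone metric only records $\min(d_{Tits},\pi)$, so after Berestovskii you must still pass from the angular metric back to the length metric $d_{Tits}$; this is standard but should be said. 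The honest summary is: your outline is a plausible scaffolding, but the load-bearing lemma is missing, and it is precisely the lemma the cited reference proves.
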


Geodesics in $\partial_{Tits}X$ reflect flatness in $X$.
For instance, if $x\in X$ and $\zeta,\eta\in\partial X$ such that
$\angle_x(\zeta,\eta)=\angle_{Tits}(\zeta,\eta)$, then the convex hull of
the union of the two rays emanating from $x$ going out to $\zeta$ and $\eta$
is isometric to a sector in $\E^2$ \cite[Corollary~II.9.9]{Bridson1999}.

Another important example of a CAT(1) space is the
space of directions based at a point.
Given a point $y$ in a CAT($\kappa$) space $Y$, deem two geodesic segments
emanating from $y$ to be equivalent if the angle between them is zero.
Equivalence classes are called \textit{geodesic germs.}  The completion
of this space with the metric induced by angles is called the
\textit{link of $y$}.  Elements in the link are thought of as directions.
A direction is \textit{genuine} if it has a geodesic representative.
Links in a CAT(1) space are always CAT(1) \cite{Nikolaev}.

Following Kleiner in \cite{Kleiner1999}, we define the
\textit{geometric dimension} of a CAT($\kappa$) space $Y$ to be the
smallest function $\dim$ on the class of such spaces
(taking on non-negative integer values and infinity) such that
\begin{itemize}
\item $\dim Y=0$ if $Y$ is discrete and 
\item $\dim Y$ is strictly greater than the dimension of every link in $Y$ 
\end{itemize}
This is equal to the maximal topological dimension of compact subspaces of $Y$
\cite[Theorem~A]{Kleiner1999}. Moreover, if $Y$ is a CAT(0) space with geometric action, then the topological and geometric dimensions are equal and finite \cite[Theorem~C]{Kleiner1999}.

\RalfSubsection{Density of Round Spheres}
\label{subsec:density}
A \textit{round sphere} in a CAT(1) space $Y$ of geometric dimension $d>0$ is an isometrically
embedded $d$-sphere $\Sigma$ of curvature 1. In a zero-dimensional CAT(1) space, a \textit{round sphere} is just
a pair of points with distance $\infty$.  For boundaries of certain CAT(0) spaces, Kleiner
proved the existence of round spheres.

\begin{Thm}\cite[Theorem~C]{Kleiner1999}
Let $X$ be a CAT(0) space admitting a geometric group action. Let $d$ be the geometric dimension of its Tits boundary.
Then there is a round sphere of dimension $d$ in $\partial_{Tits}X$. In addition, this round sphere is the boundary of an isometrically embedded $(d+1)$-flat in $X$.
\end{Thm}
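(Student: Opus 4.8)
The plan is to reproduce the strategy of Kleiner's proof in \cite{Kleiner1999}. One first produces the round $d$-sphere inside $\partial_{Tits}X$ by an induction on $d=\dim\partial_{Tits}X$ that exploits the homogeneity coming from the geometric action, and then promotes it to a $(d+1)$-flat in $X$ by a convexity argument. The geometric action is genuinely needed here: a general CAT(1) space of geometric dimension $d$ need not contain any isometrically embedded round $d$-sphere, so the dimension hypothesis alone is not enough.

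The base case $d=0$ is immediate: $\partial_{Tits}X$ is then discrete and contains at least two points (since $X$, admitting a geometric action and not being a point, is unbounded and geodesically complete), and any two distinct points of it are at Tits distance $\infty$ --- a finite Tits distance would produce a nondegenerate Tits geodesic, hence positive dimension --- which is by definition a round $0$-sphere. For the inductive step, since the geometric dimension is the \emph{smallest} dimension function strictly exceeding every link dimension, there is a point $\xi\in\partial_{Tits}X$ whose link (space of directions) in $\partial_{Tits}X$ has geometric dimension $d-1$. One then arranges that $\xi=\gamma(+\infty)$ for a geodesic line $\gamma$ in $X$ whose parallel set splits isometrically as $\R\times Y'$, with $Y'$ a CAT(0) space that again admits a geometric group action and has $\dim\partial_{Tits}Y'=d-1$; concretely $\gamma$ is taken to be an axis of a suitably chosen hyperbolic element $g$ of the acting group (using \cite[Theorem~A]{Kleiner1999} together with a density/limiting argument on the compact space $\partial X$ to locate the right $\xi$), so that a subgroup of the centralizer of $g$ preserves the parallel set and one checks that its action there is cocompact. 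By the inductive hypothesis $\partial_{Tits}Y'$ contains a round $(d-1)$-sphere $\Sigma'$, and since $\partial_{Tits}(\R\times Y')$ is the spherical suspension $\{\gamma(+\infty),\gamma(-\infty)\}\ast\partial_{Tits}Y'$ by \cite[Theorem~II.9.24]{Bridson1999}, the suspension of $\Sigma'$ is a round $d$-sphere sitting inside $\partial_{Tits}(\R\times Y')\subseteq\partial_{Tits}X$.

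To obtain the flat one can carry it through the induction --- the flat for $\R\times Y'$ is $\R$ times the flat produced for $Y'$ --- or argue directly from a round $d$-sphere $\Sigma\subseteq\partial_{Tits}X$. For a point $x_0$ lying on the geodesic line joining a pair of antipodes $\eta,\eta'\in\Sigma$ (such a line exists because $d_{Tits}(\eta,\eta')=\pi$), the triangle inequality for Alexandrov angles together with the identity $d_{Tits}(\eta,\zeta)+d_{Tits}(\zeta,\eta')=\pi$ on the round sphere forces $\angle_{x_0}(\eta,\zeta)=\angle_{Tits}(\eta,\zeta)$ for every $\zeta\in\Sigma$; \cite[Corollary~II.9.9]{Bridson1999} then makes the convex hull of the $\eta$-ray and the $\zeta$-ray a flat sector, and assembling these over $\Sigma$ exhibits an isometrically embedded $\E^{d+1}$ with Tits boundary exactly $\Sigma$. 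I expect the main obstacle to be the inductive descent: one must guarantee simultaneously that the perpendicular factor $Y'$ realizes the drop from $d$ to $d-1$ in Tits-boundary dimension \emph{and} carries a geometric action, which requires carefully identifying which boundary points $\xi$ arise as endpoints of axes whose parallel sets are acted on cocompactly by a subgroup, and then relating the link of $\partial_{Tits}X$ at $\xi$ to $\partial_{Tits}Y'$, since a priori the former may be strictly larger. This is the technical heart of \cite{Kleiner1999}, and in the present paper it is simply invoked as a black box.
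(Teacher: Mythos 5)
There is an important mismatch of expectations here: the paper does not prove this statement at all --- it is quoted verbatim from Kleiner \cite[Theorem~C]{Kleiner1999} and used as a black box --- so the only benchmark is Kleiner's own argument, and your proposal is a plan for reconstructing it that, as you yourself concede, defers the decisive steps. As set up, the inductive descent does not go through: you need a point $\xi$ whose link in $\partial_{Tits}X$ has dimension $d-1$ and which is simultaneously the endpoint of an axis of a hyperbolic element $g$ such that the cross-section $Y'$ of the parallel set both inherits a geometric action and satisfies $\dim\partial_{Tits}Y'\ge d-1$. Cocompactness of the centralizer of $g$ on $\Min(g)$ is indeed standard for geometric actions, but the existence of such a $\xi$ among axis endpoints, and above all the comparison between the link of $\partial_{Tits}X$ at $\xi$ and $\partial_{Tits}Y'$ (directions at $\xi$ need not be realized by geodesics lying in the parallel set of a single axis), is precisely the mathematical content of Kleiner's theorem; invoking an unspecified ``density/limiting argument'' does not supply it, and Kleiner's proof is in fact not organized around axes and centralizers. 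So the heart of the proof is missing rather than merely compressed.

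The direct construction of the flat also contains a concrete error. Two points $\eta,\eta'\in\Sigma$ with $d_{Tits}(\eta,\eta')=\pi$ need not be joined by a geodesic line in $X$: a connecting line is guaranteed only when the Tits distance exceeds $\pi$ \cite[Proposition~II.9.21(1)]{Bridson1999}, and the paper itself recalls the counterexample \cite[Example~II.9.23(2)]{Bridson1999} at Tits distance exactly $\pi$ precisely to warn against this step. Even granting such a line through $x_0$, the equalities $\angle_{x_0}(\eta,\zeta)=\angle_{Tits}(\eta,\zeta)$ give, via \cite[Corollary~II.9.9]{Bridson1999}, only flat half-planes containing that line --- a ``book of half-flats''; to conclude that the union of rays from $x_0$ to $\Sigma$ is an isometrically embedded $\E^{d+1}$ you would also need $\angle_{x_0}(\zeta,\zeta')=\angle_{Tits}(\zeta,\zeta')$ for all pairs $\zeta,\zeta'\in\Sigma$, which your argument does not provide. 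That assembly step is essentially Leeb's result \cite[Proposition~2.1]{Leeb2000}, which the paper cites as a separate nontrivial theorem, or the corresponding part of Kleiner's proof. (The base case is likewise incomplete on the flat side: for $d=0$ you still owe a geodesic line in $X$ whose endpoints are the two boundary points, though this does follow from cocompactness by a standard limiting argument.)
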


We remark that Leeb \cite[Proposition~2.1]{Leeb2000} proved that any top-dimensional round sphere in the boundary of $X$ is the boundary of a flat. 

We will also need the $\pi$-convergence technique of Papasoglu-Swenson:

\begin{Thm}\cite[Theorem~4]{Papasoglu2009}
Let $G$ be a group acting geometrically on a CAT(0) space $X$ and $\theta\in[0,\pi]$.
Then for any sequence of distinct elements $(g_n)\subset G$, there are points $p,q\in\partial X$
and a subsequence $(g_{n_k})$ such that for every compact $K\subset\partial X\setminus \overline{B_{Tits}}(p,\theta)$
and neighborhood $U$ of $\overline{B_{Tits}}(q,\pi-\theta)$, $g_{n_k}(K)\subset U$ for large enough $k$.
\end{Thm}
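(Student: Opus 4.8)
The plan is to combine compactness of the visual compactification $\overline X = X\cup\partial X$, the lower semicontinuity of $d_{Tits}$ in the cone topology, and a comparison–geometry estimate built from the monotonicity of comparison angles (\ref{angle function}) and the Tits angle formula (\ref{titsangles}). First I would fix a basepoint $x_0\in X$. Since the action is properly discontinuous and the $g_n$ are pairwise distinct, $d(x_0,g_n x_0)\to\infty$, so by compactness of $\overline X$ we may pass to a subsequence (still written $(g_n)$) with $g_n x_0\to q$ and $g_n^{-1}x_0\to p$ in $\overline X$, where $p,q\in\partial X$. I would treat $0<\theta<\pi$ as the main case; the endpoint cases $\theta\in\{0,\pi\}$ and the case of a zero-dimensional Tits boundary degenerate to the classical convergence-action behavior and are handled separately. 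Since $d_{Tits}$ is lower semicontinuous in the cone topology, every closed Tits ball is cone-closed, hence cone-compact; in particular $\partial X\setminus\overline{B_{Tits}}(p,\theta)$ is cone-open and $d_{Tits}(p,\cdot)$ attains its infimum on the compact set $K$, so that $\rho:=\inf_{\zeta\in K}d_{Tits}(p,\zeta)>\theta$.

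I would then reduce the theorem to the assertion: $(\star)$ \emph{for every sequence $\zeta_n\in K$, every cone-accumulation point $\eta$ of $(g_n\zeta_n)$ satisfies $d_{Tits}(q,\eta)\le\pi-\rho$.} Indeed, if the conclusion failed there would be a cone-open $U\supseteq\overline{B_{Tits}}(q,\pi-\theta)$ and, for infinitely many $n$, a point $\zeta_n\in K$ with $g_n\zeta_n\notin U$; passing to a cone-convergent subsequence $g_n\zeta_n\to\eta$ and using that $\partial X\setminus U$ is cone-closed gives $\eta\notin U$, hence $d_{Tits}(q,\eta)>\pi-\theta\ge\pi-\rho$, contradicting $(\star)$ (which is proved along every subsequence). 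To prove $(\star)$, fix $\zeta_n\in K$ and pass to subsequences so that $g_n\zeta_n\to\eta$ and $\zeta_n\to\zeta\in K$ in the cone topology (so $d_{Tits}(p,\zeta)\ge\rho$), and, using geodesic completeness, so that the geodesics $[x_0,g_n x_0]$ (extended to rays) converge to a ray $[x_0,q)$ and the rays $[x_0,g_n\zeta_n)$ converge to $[x_0,\eta)$ uniformly on bounded sets. Writing $q_n(s)=[x_0,g_n x_0](s)\to q(s)$ and $\eta_n(s)=[x_0,g_n\zeta_n)(s)\to\eta(s)$, the crux is the claim that for every fixed $t\ge 0$ one has $\overline\angle_{x_0}\bigl(q(t),\eta(t)\bigr)\le\pi-\rho$. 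Granting this, (\ref{angle function}) makes $\overline\angle_{x_0}(q(t),\eta(t))$ nondecreasing in $t$, and (\ref{titsangles}) gives $\angle_{Tits}(q,\eta)=\lim_{t\to\infty}\overline\angle_{x_0}(q(t),\eta(t))\le\pi-\rho<\pi$, which (being $<\pi$) equals $d_{Tits}(q,\eta)$; this is $(\star)$, and it also rules out the antipodal case automatically.

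For the claim I would use that $g_n$ is an isometry and $[x_0,g_n\zeta_n)=g_n[g_n^{-1}x_0,\zeta_n)$: applying $g_n^{-1}$, the comparison angle $\overline\angle_{x_0}(q_n(t),\eta_n(t))$ — which tends to $\overline\angle_{x_0}(q(t),\eta(t))$ by continuity of comparison angles in their vertices — equals the comparison angle at the point $g_n^{-1}x_0$ between the rays $[g_n^{-1}x_0,x_0]$ and $[g_n^{-1}x_0,\zeta_n)$ truncated at radius $t$. By the monotonicity (\ref{angle function}), after extending $[g_n^{-1}x_0,x_0]$ past $x_0$ (geodesic completeness) to a ray to some $\nu_n\in\partial X$, this is at most $\lim_{s\to\infty}\overline\angle_{g_n^{-1}x_0}\!\bigl([g_n^{-1}x_0,\nu_n)(s),[g_n^{-1}x_0,\zeta_n)(s)\bigr)=\angle_{Tits}(\nu_n,\zeta_n)$ by (\ref{titsangles}); so $\overline\angle_{x_0}(q_n(t),\eta_n(t))\le\angle_{Tits}(\nu_n,\zeta_n)$ for $n$ large. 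It then remains to show $\limsup_n\angle_{Tits}(\nu_n,\zeta_n)\le\pi-\rho$. Here, since $g_n^{-1}x_0\to p$, the segments $[g_n^{-1}x_0,x_0]$ accumulate on $[x_0,p)$ and (after a further subsequence $\nu_n\to\nu$) the whole configuration $p,x_0,\nu$ sits on a geodesic line, so $\angle_{Tits}(p,\nu)=\pi$; and one expects to control $\angle_{Tits}(\nu_n,\zeta_n)$ from above by a horofunction computation — with $b_p$ the Busemann function of $p$ normalized at $x_0$, rewrite $d\bigl(x_0,[g_n^{-1}x_0,\zeta_n)(t)\bigr)-d(x_0,g_n^{-1}x_0)$ in terms of $b_p$ along $[g_n^{-1}x_0,\zeta_n)$, use that $b_p$ is convex and $1$-Lipschitz together with the identity $\lim_{s\to\infty}b_p(\gamma(s))/s=-\cos\angle_{Tits}\bigl(p,\gamma(\infty)\bigr)$ (itself a consequence of (\ref{angle function}) and (\ref{titsangles})), and conclude that the relevant comparison angles at $g_n^{-1}x_0$ converge to a quantity $\le\pi-\angle_{Tits}(p,\zeta)\le\pi-\rho$. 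Uniformity over $K$ is automatic from the accumulation-point form of $(\star)$ together with cone-compactness of $\partial X$.

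The main obstacle is exactly this last estimate. The soft part — compactness producing $p$ and $q$, and the reductions via lower semicontinuity of $d_{Tits}$ — is routine, but isolating the sharp numerics $\theta\leftrightarrow\pi-\theta$ is delicate because CAT(0) gives only one-sided estimates ($\overline\angle\ge\angle$, and convexity of distance and of Busemann functions bounds the spreading of geodesics from below rather than above), so every inequality must be routed through the monotone, continuous comparison angles of (\ref{angle function})--(\ref{titsangles}) and through the horofunction of $p$; moreover the three limiting processes — $n\to\infty$ pushing $g_n^{\pm 1}x_0$ to $p$ and $q$, the truncation radius $t$, and $t\to\infty$ recovering $\eta$ — must be interleaved in the right order, since one cannot pass a comparison angle to its Tits limit in $n$ before controlling it ($\angle_{Tits}$ is not upper semicontinuous in the cone topology). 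One must also dispose separately of the regimes $\theta\in\{0,\pi\}$ and of a zero-dimensional Tits boundary, where the statement reduces to the familiar North--South/convergence dynamics on $\partial X$.
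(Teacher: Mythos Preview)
The paper does not prove this theorem: it is quoted verbatim from Papasoglu--Swenson \cite[Theorem~4]{Papasoglu2009} and used as a black box. The only content the paper adds is the short paragraph immediately following the statement, explaining that $p$ and $q$ arise as subsequential cone-limits of the orbits $(g_{n_k}x_0)$ and $(g_{n_k}^{-1}x_0)$ of a fixed basepoint. Your proposal recovers exactly this identification (with $g_n x_0\to q$ as the attractor and $g_n^{-1}x_0\to p$ as the repeller, which is the labeling consistent with the theorem as stated; the paper's explanatory paragraph appears to swap the names). Beyond that there is no ``paper's own proof'' to compare against.

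As a side remark: your sketch is broadly in the spirit of the original Papasoglu--Swenson argument --- pull back by $g_n^{-1}$, control comparison angles at $g_n^{-1}x_0$ between the ray toward $\zeta_n$ and the segment back to $x_0$, and extract the $\pi-\theta$ numerics from a Busemann/horofunction estimate for $p$. You are right that the delicate point is the upper bound on $\limsup_n\angle_{Tits}(\nu_n,\zeta_n)$: the step where you pass from a bound at a moving basepoint $g_n^{-1}x_0$ to a Tits-angle bound at infinity requires care, since $\angle_{Tits}$ is only lower semicontinuous in the cone topology, and the extension of $[g_n^{-1}x_0,x_0]$ to a ray with endpoint $\nu_n$ need not behave well as $n\to\infty$. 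The standard way around this (and what Papasoglu--Swenson do) is to avoid introducing $\nu_n$ altogether and to estimate $d\bigl(g_n x_0,\,g_n\zeta_n(t)\bigr)$ directly via the comparison angle $\overline\angle_{g_n^{-1}x_0}(x_0,\zeta_n(t))$, which by the law of cosines and the CAT(0) inequality is at least $\pi-\overline\angle_{x_0}(g_n^{-1}x_0,\zeta_n(t))$; the latter tends to $\pi-\angle_{Tits}(p,\zeta)$ by (\ref{titsangles}) once one lets $n\to\infty$ before $t\to\infty$. If you tighten your argument along those lines the proof goes through, but in any case this lies outside what the present paper undertakes.
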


It will be important below to understand how $p$, $n$ and $(g_{n_k})$ arise.  Fix
$x_0\in X$.  Since $\partial X$ is compact,
we can pass to a subsequence so that the sequences $(g_{n_k}x_0)$ and $(g_{n_k}^{-1}x_0)$ converge to points 
$p$ and $n$ respectively.

The existence of antipodes was proven by Balser-Lytchak.

\begin{lemma}\cite[Lemma~3.1]{Balser2005}
\label{antipodes are}
Let $X$ be a CAT(1) space with geometric dimension $d<\infty$ and $K\subset X$ be a round sphere.
Then every point of $X$ has an antipode in $K$.
\end{lemma}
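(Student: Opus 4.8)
The plan is to induct on the geometric dimension $d$, and within each dimension to argue by contradiction. Assume some $x\in X$ has no antipode in $K$, that is, $d(x,p)<\pi$ for all $p\in K$. A round sphere is compact and $p\mapsto d(x,p)$ is continuous, so this distance attains a maximum $r<\pi$ on $K$, say at $p_0$. Since $r<\pi$ and $X$ is CAT(1), the geodesic $[p_0,x]$ exists and is unique; let $\nu$ denote its direction at $p_0$, a point of the space of directions (link) $\Sigma_{p_0}X$.

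The key step is to transfer the situation to $\Sigma_{p_0}X$. This link is again a CAT(1) space, and by the defining property of geometric dimension its dimension is strictly less than $d$. Inside it, the directions at $p_0$ of the geodesics of $K$ form an isometrically embedded round $(d-1)$-sphere $\Sigma_{p_0}K$: here one uses that $K$ is isometrically embedded, so that geodesics of $K$ realize distances and are therefore $X$-geodesics, and that Alexandrov angles are intrinsic; hence $\Sigma_{p_0}K$ is the round sphere of directions of $K\cong S^d$ at $p_0$. In particular $\dim\Sigma_{p_0}X\ge d-1$, so it equals $d-1$. Now the first variation formula — valid in CAT(1) spaces for endpoints at distance $<\pi$ — gives, for a unit-speed geodesic $\sigma$ of $K$ from $p_0$ in direction $u\in\Sigma_{p_0}K$,
\[
\frac{d}{dt}\Big|_{t=0^{+}}d\bigl(x,\sigma(t)\bigr)=-\cos\angle_{p_0}(u,\nu).
\]
Because $p_0$ maximizes $d(x,\cdot)$ on $K$, the left-hand side is $\le 0$, so $\angle_{p_0}(u,\nu)\le\pi/2$ for every $u\in\Sigma_{p_0}K$. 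On the other hand the inductive hypothesis, applied to the CAT(1) space $\Sigma_{p_0}X$ of finite geometric dimension $d-1$ and to the round sphere $\Sigma_{p_0}K$, produces an antipode $u_0\in\Sigma_{p_0}K$ of $\nu$, i.e.\ a direction with $\angle_{p_0}(u_0,\nu)=\pi$ (recall that in a CAT(1) space two points are at distance $\ge\pi$ exactly when the angle between them is $\pi$). This contradicts $\angle_{p_0}(u_0,\nu)\le\pi/2$, and the induction step is complete.

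For the base case $d=0$, recall that a CAT(1) space of geometric dimension $0$ has no geodesic segments of positive length — otherwise an interior point of such a segment would have a nonempty link, forcing the dimension to be at least $1$ — and hence any two distinct points are at distance $\ge\pi$. Thus if $K=\{a,b\}$ and $x\in X$: if $x\notin\{a,b\}$ then $d(x,a)\ge\pi$, while if $x\in\{a,b\}$ then the distance to the other point is $d(a,b)\ge\pi$; either way $x$ has an antipode in $K$. The same observation patches the one subtlety in the induction: when $d-1=0$ the set $\Sigma_{p_0}K$ is a pair of directions at angle exactly $\pi$ rather than at distance $\infty$, hence not literally a round $0$-sphere in the sense defined above; but the geometric-dimension-$0$ fact just proved applies verbatim to any two-point subset, which is exactly what is used. (Equivalently, one runs the induction with ``round $0$-sphere'' understood as any two-point set.)

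The step I expect to require the most care is precisely this bookkeeping: verifying that $\Sigma_{p_0}K$ sits isometrically in $\Sigma_{p_0}X$ as a top-dimensional round sphere, so that the inductive hypothesis applies with the correct dimension, and dealing with the $d=1\to d=0$ anomaly. The remaining ingredients — the compactness reduction, the first variation formula, and the fact that links of CAT(1) spaces are CAT(1) — are standard, so I do not anticipate any serious obstacle beyond getting these details right.
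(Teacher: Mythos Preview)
The paper does not supply its own proof of this lemma; it is simply quoted from Balser--Lytchak \cite[Lemma~3.1]{Balser2005} and used as a black box. So there is nothing in the paper to compare your argument against.

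That said, your proposal is essentially the Balser--Lytchak argument: induct on geometric dimension, take the farthest point $p_0\in K$ from $x$, and use first variation at $p_0$ to push the problem down into the link $\Sigma_{p_0}X$, where the inductive hypothesis supplies an antipode of the direction $\nu$ inside $\Sigma_{p_0}K$, contradicting the $\pi/2$-bound coming from maximality. Your handling of the $d=0$ anomaly is also the standard fix. The one place to be slightly more careful than you indicate is that the induction is applied to the \emph{completion} of the space of directions (which is what is CAT(1)), and you should check that the genuine direction $\nu$ and the subsphere $\Sigma_{p_0}K$ survive passage to the completion; this is routine but worth saying. Otherwise the argument is sound.
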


\begin{corollary}[Density of Round Spheres]
\label{density of round spheres}
Let $G$ be a group acting geometrically on a CAT(0) space $X$.
Then the union of round spheres in $\partial X$ is dense (in the the cone topology). In addition, we may pick these round spheres to consist of boundaries of flats in $X$.
\end{corollary}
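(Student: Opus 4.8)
The plan is to produce a single round sphere via Kleiner's theorem and then distribute its $G$-translates densely using the $\pi$-convergence theorem. Let $\Sigma_0=\partial F_0$ be the round sphere of dimension $d=\dim\partial_{Tits}X$ given by Kleiner's theorem, where $F_0\subset X$ is an isometrically embedded $(d+1)$-flat. Such a flat is convex and closed, so $\partial F_0$ is closed in the cone topology; moreover, for every $g\in G$ the set $gF_0$ is again an isometrically embedded $(d+1)$-flat and $g\Sigma_0=\partial(gF_0)$ is again a round sphere. Hence it suffices to prove that for every nonempty cone-open set $V\subseteq\partial X$ there is some $g\in G$ with $g\Sigma_0\cap V\neq\emptyset$; both assertions of the corollary then follow at once. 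Fix such a $V$ and a point $\xi\in V$.

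First I would note that some Tits ball about $\xi$ lies inside $V$. If not, choose $\eta_k\in\partial X\setminus V$ with $d_{Tits}(\xi,\eta_k)\le 1/k$; passing to a cone-convergent subsequence $\eta_k\to\eta$ yields $\eta\in\partial X\setminus V$, so $\eta\neq\xi$, while lower semicontinuity of $d_{Tits}$ in the cone topology forces $d_{Tits}(\xi,\eta)\le\liminf_k d_{Tits}(\xi,\eta_k)=0$, a contradiction. So fix $\epsilon\in(0,\pi]$ with $\overline{B_{Tits}}(\xi,\epsilon)\subseteq V$. Next, using cocompactness, fix a basepoint $x_0$, a geodesic ray $\rho$ from $x_0$ to $\xi$, and elements $g_n\in G$ with $d(g_nx_0,\rho(n))$ bounded; by properness the orbit points leave every bounded set, so we may take the $g_n$ pairwise distinct, and a standard argument shows $g_nx_0\to\xi$ in the cone topology.

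Now apply the $\pi$-convergence theorem of Papasoglu--Swenson with parameter $\theta=\pi-\epsilon$. Using the description in the remark following that theorem of how the distinguished points arise from the orbit of $x_0$ — and replacing $(g_n)$ by $(g_n^{-1})$ if necessary — we obtain a subsequence $(g_{n_k})$ and a point $p\in\partial X$ such that, for every compact $K\subseteq\partial X\setminus\overline{B_{Tits}}(p,\theta)$ and every neighborhood $U$ of $\overline{B_{Tits}}(\xi,\pi-\theta)=\overline{B_{Tits}}(\xi,\epsilon)$, one has $g_{n_k}(K)\subseteq U$ for all large $k$; in particular we may take $U=V$. By Lemma~\ref{antipodes are}, applied to the round sphere $\Sigma_0$ inside the finite-dimensional CAT(1) space $\partial_{Tits}X$, the point $p$ has an antipode $\sigma_0\in\Sigma_0$, so $d_{Tits}(p,\sigma_0)\ge\pi>\theta$ and $K:=\{\sigma_0\}$ is a compact subset of $\partial X\setminus\overline{B_{Tits}}(p,\theta)$. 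Hence $g_{n_k}(\sigma_0)\in V$ for some $k$, and since $g_{n_k}(\sigma_0)\in g_{n_k}\Sigma_0=\partial(g_{n_k}F_0)$, this round sphere meets $V$, completing the argument.

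The main obstacle is orienting the dynamics of $\pi$-convergence correctly: we need the attracting ball of the chosen subsequence to be a small Tits ball about $\xi$, so that images land in $V$, while the complementary repelling ball $\overline{B_{Tits}}(p,\theta)$ must not contain all of $\Sigma_0$. The Balser--Lytchak antipode lemma is exactly what rescues the second point, since $\theta<\pi$ guarantees an antipode of $p$ in $\Sigma_0$ lies outside that ball. The only other points requiring attention are the lower-semicontinuity observation above and the routine fact that $g_nx_0\to\xi$.
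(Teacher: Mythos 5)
Your proof is correct and follows essentially the same route as the paper: a single round sphere bounding a flat from Kleiner's theorem, a Balser--Lytchak antipode of the repelling limit point, and Papasoglu--Swenson $\pi$-convergence applied to a sequence with $g_nx_0\to\xi$. The only difference is presentational -- you fix a cone-open set $V$, extract a Tits ball $\overline{B_{Tits}}(\xi,\epsilon)\subseteq V$ and use the explicit parameter $\theta=\pi-\epsilon$, whereas the paper phrases the same dynamics as $g_nq\to p$ -- so this is a matter of bookkeeping, not of substance.
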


\begin{proof}
Fix $x_0\in X$.
Choose any point $p\in\partial X$.  Since the action of $G$ is cocompact, there is a sequence of group elements
$(g_n)\subset G$ such that $(g_nx_0)$ converges to $p$.  After passing to a subsequence (if necessary), we may assume
that $(g_n^{-1}x_0)$ also converges to some point $n\in\partial X$.  By \cite[Theorem~C]{Kleiner1999} we know that there exists a round
sphere $K\subset\partial_{Tits}X$.  By Lemma~\ref{antipodes are}, we can get a $q\in K$
for which $d_{Tits}(n,q)\ge\pi$.
Apply $\pi$-convergence now to get that $g_nq\to p$.
Since $G$-translates of round spheres are round spheres, we see that the family of round spheres
$GK$ is dense.
\end{proof}

Suppose $X$ is a CAT(0) space with geometric group action. To prove the Main Theorem, we will need to know that any factor of $X$ also has a dense set of round spheres at infinity. We conclude this section by proving this.  Recall that a flat in a CAT(0) space $X$
is called \textit{maximal} if it is not contained as a proper subspace of any other flat in $X$.

\begin{lemma}
Let $X$ be a CAT(0) space splitting as $X=X_1\times X_2$.
Then $F\subset X$ is a maximal flat iff it can be written as $F=F_1\times F_2$
where $F_1$ and $F_2$ are maximal flats in $X_1$ and $X_2$ respectively.
\end{lemma}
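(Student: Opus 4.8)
The plan is to prove both directions by exploiting the fact that geodesics in a product $X_1 \times X_2$ are precisely products of geodesics (or degenerate geodesics) in the factors, together with the fact that a subset of $X$ is a flat if and only if it is convex and isometric to a Euclidean space.

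First I would prove the easy direction: if $F_1 \subset X_1$ and $F_2 \subset X_2$ are maximal flats, then $F = F_1 \times F_2$ is a flat in $X$, since $F_1 \times F_2$ with the product metric is isometric to $\E^{k_1} \times \E^{k_2} = \E^{k_1+k_2}$ and is convex in $X$ (a product of convex sets is convex). For maximality, suppose $F \subsetneq F'$ with $F'$ a flat in $X$. Using geodesic completeness and the product structure, I would argue that the projections $\proj_i(F')$ are flats in $X_i$ containing $F_i$; since $F'$ is itself isometric to a Euclidean space containing $F = F_1 \times F_2$, and $F'$ is convex, one checks $F'$ must itself split as $\proj_1(F') \times \proj_2(F')$ (any geodesic through a point of $F$ staying in $F'$ decomposes into factor geodesics, and these sweep out the product). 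Then $\proj_i(F') \supseteq F_i$ forces equality by maximality of $F_i$, hence $F' = F$.

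For the converse, let $F \subset X$ be a maximal flat. Set $F_i = \proj_i(F) \subset X_i$, the image under the (1-Lipschitz, geodesic-preserving) projection. I would first show $F \subseteq F_1 \times F_2$ trivially, and then show $F_1 \times F_2$ is a flat: the key point is that since $F$ is convex and flat, and the factor projections of a geodesic in $F$ are geodesics in the $X_i$, the set $F_i$ is convex; moreover restricting the flat metric, $F_i$ is nonpositively curved and, being a convex subset of $X_i$ with the geodesics of a Euclidean space, is itself isometric to a Euclidean space, i.e. a flat. Then $F \subseteq F_1 \times F_2$, the latter is a flat, and maximality of $F$ forces $F = F_1 \times F_2$. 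Finally, if either $F_i$ were contained in a strictly larger flat $F_i'$ in $X_i$, then $F_1' \times F_2'$ (with the other factor unchanged) would be a flat in $X$ properly containing $F$, contradicting maximality; hence each $F_i$ is maximal.

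The main obstacle I expect is the claim that the projection $F_i = \proj_i(F)$ of a flat is again \emph{isometric to a Euclidean space} (not merely convex and nonpositively curved), and that $F$ genuinely equals $F_1 \times F_2$ rather than a proper convex subset of it. The cleanest route is probably: pick a point $x = (x_1,x_2) \in F$; for any $y_1 \in F_1$ there is $y = (y_1, y_2) \in F$, and the geodesic $[x,y]$ in $F$ projects to the geodesic $[x_1,y_1]$; running geodesics in $F$ through $x$ in all directions and projecting shows $F_1$ is the union of geodesic segments from $x_1$ spanning a flat subspace, and a dimension/convexity argument (or direct appeal to the product decomposition of geodesics) identifies $F_1 \times F_2$ with a sub-Euclidean-space containing the flat $F$; maximality then collapses the inclusion. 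Care is needed to ensure the product metric on $F_1 \times F_2$ agrees with the induced metric from $X$, which follows from $d_X((a_1,a_2),(b_1,b_2))^2 = d_{X_1}(a_1,b_1)^2 + d_{X_2}(a_2,b_2)^2$.
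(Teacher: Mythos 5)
Your proposal follows essentially the same route as the paper's proof: in both directions you take the coordinate projections $\pi_i(F)$, check they are flats, note that $F\subseteq\pi_1(F)\times\pi_2(F)$ and that a product of flats is again a flat, and let maximality force the equalities, with maximality of the factors obtained by the same product-enlargement contradiction. The one step you flag as the main obstacle --- that the projection of a flat is genuinely isometric to a Euclidean space --- is also the step the paper dispatches most briefly, so your plan matches the published argument in both structure and level of detail.
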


\begin{proof}
Let $\pi_i:X\to X_i$ denote coordinate projection.  Let $F$ be a maximal flat in $X$ and denote $F_i=\pi_i(F)$.
Then $F_1=\pi_1(F)$ is a flat in $X_1$ because it is isometric to a totally geodesic subspace of $F$, namely
$F_1\times\{x_2\}$ where $x_2\in F_2$.  Similarly, $F_2$ is a flat in $X_2$.  Since $F_1\times F_2$ is a flat in $X$
containing $F$ and $F$ is maximal, we must have $F=F_1\times F_2$.  Finally, if $F_1$ were not maximal in $X_1$,
then there would be a flat $F_1'\subset X_1$ containing $F_1$ as a proper subspace and $F_1'\times F_2$ would contain
$F$ as a proper subspace.  So $F_1$ and $F_2$ must both be maximal.  This proves the forward implication.

Now suppose we have been given maximal flats $F_1,F_2$ in $X_1,X_2$, and suppose $F\subset X$ is a flat containing $F_1\times F_2$.
Then $\pi_1(F)$ is a flat in $X_1$ containing $F_1$ and $\pi_2(F)$ is a flat in $X_2$ containing $F_2$.  Therefore
$\pi_1(F)=F_1$ and $\pi_2(F)=F_2$, which means that $F\subset F_1\times F_2$.  Therefore $F=F_1\times F_2$, showing that
$F_1\times F_2$ is maximal.
\end{proof}

As an immediate consequence, we get

\begin{corollary}
Let $X$ be a CAT(0) space splitting as $X=X_1\times X_2$.
A subspace of $\partial_{Tits}X$ is a round sphere iff it is a spherical join of
round spheres in $\partial_{Tits}X_1$ and $\partial_{Tits}X_2$.
\end{corollary}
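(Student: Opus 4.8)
The statement follows from the preceding lemma together with the dictionary relating round spheres in the Tits boundary to flats of maximal dimension. Write $d=\dim\partial_{Tits}X$ and $d_i=\dim\partial_{Tits}X_i$. A $k$-flat in a CAT(0) space has as its Tits boundary an isometrically embedded $(k-1)$-sphere, whose topological dimension cannot exceed the geometric dimension of the ambient Tits boundary, while conversely Kleiner's Theorem produces a flat realizing this bound; hence the maximal dimension of a flat in $X$ is $d+1$, and in $X_i$ it is $d_i+1$. Moreover any flat $F\subseteq X$ lies in $\pi_1(F)\times\pi_2(F)$, a product of flats in the two factors, so the maximal flat dimensions add: $d+1=(d_1+1)+(d_2+1)$, i.e., $d=d_1+d_2+1$. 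Finally, by Kleiner's Theorem and Leeb's remark, a subset $\Sigma\subseteq\partial_{Tits}X$ is a round sphere precisely when $\Sigma=\partial F$ for a flat $F\subseteq X$ of maximal dimension $d+1$ (in the degenerate case $d=0$ this says $\Sigma$ is a pair of points at Tits distance $\infty$, namely the endpoints of a geodesic line); the same holds in each factor.

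Suppose first that $\Sigma\subset\partial_{Tits}X$ is a round sphere, so $\Sigma=\partial F$ with $F$ a flat of maximal dimension, in particular a maximal flat. The preceding lemma gives $F=F_1\times F_2$ with each $F_i$ a maximal flat in $X_i$. Each $F_i$ is in fact of maximal dimension: were $\dim F_1<d_1+1$, we could pick a flat $\widetilde F_1\subset X_1$ of dimension $d_1+1$, and then $\widetilde F_1\times F_2$ would be a flat in $X$ of dimension exceeding $\dim F=d+1$, which is absurd. So $\partial F_i$ is an isometrically embedded round $d_i$-sphere in $\partial_{Tits}X_i$, that is, a round sphere; and, using that the Tits boundary of a product of flats is the spherical join of their boundaries, $\Sigma=\partial(F_1\times F_2)=\partial F_1\ast\partial F_2$ is a spherical join of round spheres.

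Conversely, let $\Sigma=\Sigma_1\ast\Sigma_2$ with $\Sigma_i$ a round sphere in $\partial_{Tits}X_i$. By the dictionary applied in each factor, $\Sigma_i=\partial F_i$ for a flat $F_i\subset X_i$ of maximal dimension $d_i+1$. Then $F:=F_1\times F_2$ is a flat in $X$ of dimension $(d_1+1)+(d_2+1)=d+1$, hence of maximal dimension, so $\partial F$ is a round sphere in $\partial_{Tits}X$; and $\partial F=\partial F_1\ast\partial F_2=\Sigma_1\ast\Sigma_2=\Sigma$.

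The argument is essentially bookkeeping with the preceding lemma and the join decomposition $\partial_{Tits}(A\times B)=\partial_{Tits}A\ast\partial_{Tits}B$, so I do not expect a genuine obstacle; the two points deserving attention are the dimension count above, which upgrades the maximal-flat factors $F_i$ to factors of maximal dimension, and the zero-dimensional case, where one must identify a round sphere in $\partial_{Tits}X_i$ with the endpoint pair of a geodesic line — the $d_i=0$ instance of Kleiner's Theorem and Leeb's remark, which relies on geodesic completeness.
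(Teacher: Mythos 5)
Your route --- the dictionary between round spheres and boundaries of top-dimensional flats (Kleiner's theorems plus Leeb's remark), the preceding lemma to split such a flat, and dimension bookkeeping --- is exactly the argument the paper leaves implicit, and your converse direction and your handling of maximality are fine. The genuine problem is where your dimension count comes from. You obtain the maximal flat dimensions $d+1$ and $d_i+1$, hence the identity $d=d_1+d_2+1$, and later the upgrade of $F_1$ (``pick a flat $\widetilde F_1\subset X_1$ of dimension $d_1+1$''), by invoking Kleiner's theorem to \emph{produce} top-dimensional flats in $X$ and in each factor $X_i$. As quoted in the paper (Theorem~C), that existence statement requires a geometric group action; the corollary assumes none, and even in the paper's eventual application, where $X$ does carry a geometric action, the factors $X_1,X_2$ need not inherit one --- this is precisely why the paper proves Lemma~\ref{density in factors} instead of applying Corollary~\ref{density of round spheres} directly to the factors. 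Without an action, a factor can have $\dim\partial_{Tits}X_i=d_i>0$ yet contain no $(d_i+1)$-flat, so both your derivation of $d=d_1+d_2+1$ and that upgrade step are unsupported under the stated hypotheses.

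The repair keeps your structure but removes every appeal to existence of flats in the factors. First prove $d=d_1+d_2+1$ as a purely CAT(1) fact: $\partial_{Tits}X=\partial_{Tits}X_1\ast\partial_{Tits}X_2$, and the geometric dimension of a spherical join is the sum of the dimensions plus one (induct on dimension, using that the link of a point $\zeta\in\partial_{Tits}X_1$ inside the join is the spherical join of the link of $\zeta$ in $\partial_{Tits}X_1$ with $\partial_{Tits}X_2$). Granting this, the forward direction needs only the upper bound $\dim F_i-1\le d_i$ from Kleiner's Theorem~A (the boundary sphere of $F_i$ is a compact subset of $\partial_{Tits}X_i$): since $(\dim F_1-1)+(\dim F_2-1)+1=d=d_1+d_2+1$, equality is forced in each factor, so $\partial F_1$ and $\partial F_2$ are round spheres and $\Sigma=\partial F_1\ast\partial F_2$; no flat in $X_i$ ever has to be produced. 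The converse needs no flats at all: $\Sigma_1\ast\Sigma_2$ is an isometrically embedded sphere of dimension $d_1+d_2+1=d$ in $\partial_{Tits}X$ (when $d_i=0$ a round sphere of $\partial_{Tits}X_i$ is an antipodal pair, and the join is a spherical suspension, still a unit sphere), hence a round sphere by definition. With these adjustments your argument goes through under the corollary's actual hypotheses.
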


\begin{lemma}
\label{density in factors}
Let $X$ be a CAT(0) space splitting as $X=X_1\times X_2$ such that $\partial X$ has a dense family of round spheres.
Then the factors $\partial X_1$ and $\partial X_2$ also have dense families of round spheres.
\end{lemma}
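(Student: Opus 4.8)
The plan is to transfer the density of round spheres from $\partial X$ down to each factor, treating $\partial X_1$ (the case of $\partial X_2$ is symmetric), by combining the product structure of geodesics in $X=X_1\times X_2$ with the description of round spheres given by the preceding Corollary.

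First I would set up the comparison of cone topologies (cf.\ \cite[\S\S II.8, II.9]{Bridson1999}). A geodesic ray in $X$ issuing from a fixed basepoint $(o_1,o_2)$ has the form $t\mapsto\bigl(\alpha_1(t\cos\theta),\alpha_2(t\sin\theta)\bigr)$ for unit-speed rays $\alpha_i$ in $X_i$ and an angle $\theta\in[0,\pi/2]$, with $\alpha_1$ (resp.\ $\alpha_2$) omitted when $\theta=\pi/2$ (resp.\ $\theta=0$); this realizes $\partial_{Tits}X=\partial_{Tits}X_1\ast\partial_{Tits}X_2$ as a join, with $\partial X_1$ embedded as the closed subset $\{\theta=0\}$ and $\partial X_2$ as $\{\theta=\pi/2\}$. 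Reading off the cone topology, a sequence $\eta_n=\bigl(\alpha_1^n(\,\cdot\,\cos\theta_n),\alpha_2^n(\,\cdot\,\sin\theta_n)\bigr)$ of boundary points converges to $\xi\in\partial X_1$ in $\partial X$ exactly when $\theta_n\to 0$ and the unit-speed first coordinates $\alpha_1^n$ converge to $\xi$ in the cone topology of $\partial X_1$. Consequently the ``forget the second factor'' map $p_1$, defined on the open set of rays with $\theta<\pi/2$, is continuous onto $\partial X_1$ and restricts to the identity on $\partial X_1\subset\partial X$.

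Next, fix $\xi\in\partial X_1$ and a cone-neighborhood $U$ of $\xi$ in $\partial X_1$; I must produce a round sphere of $\partial X_1$ meeting $U$. Viewing $\xi$ as a point of $\partial X$, it has $\theta=0$, hence lies in the domain of $p_1$, so there is a cone-neighborhood $V$ of $\xi$ in $\partial X$, which we may take disjoint from the closed set $\partial X_2$, with $p_1(V)\subseteq U$. By the density hypothesis, choose a round sphere $\Sigma\subseteq\partial X$ meeting $V$, say $\eta\in\Sigma\cap V$. By the Corollary, $\Sigma=S\ast T$ for round spheres $S\subseteq\partial X_1$ and $T\subseteq\partial X_2$, whence $\Sigma\cap\partial X_1=S$ and in particular $p_1(\eta)\in S$. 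Since $p_1(\eta)\in U$, the round sphere $S$ meets $U$ (and is nonempty, hence genuine). As $\xi$ and $U$ were arbitrary, round spheres are dense in $\partial X_1$, and symmetrically in $\partial X_2$.

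The substantive point — and the step I expect to be the only real obstacle — is the topological bookkeeping in the second paragraph: identifying the cone topology on $\partial(X_1\times X_2)$ with the join of the cone topologies on the $\partial X_i$ and verifying the continuity of $p_1$. Concretely, when $\theta_n\to 0$ one rescales $\alpha_1^n(s)=\gamma_1^n(s/\cos\theta_n)$ for the first product coordinate $\gamma_1^n$ and uses uniform convergence on compacts together with the $1$-Lipschitz continuity of the limiting ray; this is routine but must be done with a little care. One should also note the harmless degenerate cases (one of $S$, $T$ empty, or a round sphere of dimension $0$), which cause no difficulty, since $S$ always contains $p_1(\eta)$ and so is genuinely a round sphere of $\partial X_1$.
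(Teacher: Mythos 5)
Your proof is correct and follows essentially the same route as the paper: both arguments thicken a cone-neighborhood $U\subset\partial X_1$ to an open subset of $\partial X=\partial X_1\ast\partial X_2$ near the $\partial X_1$-face (you via continuity of the join projection $p_1$ at points of $\partial X_1$, the paper via an explicit open set $q\bigl(U\times\partial X_2\times[0,\pi/4)\bigr)$ in the quotient description of the join), then invoke the preceding corollary to split a round sphere meeting it as $K_1\ast K_2$ and read off a point of $K_1\cap U$. The topological bookkeeping you flag is exactly what the paper's quotient-map conventions encapsulate, so no substantive difference remains.
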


\begin{proof}
Let $q:\partial X_1\times\partial X_2\times[0,\pi/2]\to\partial X_1\ast\partial X_2$ be a
quotient map with the conventions
\begin{itemize}
\item The restriction to $\partial X_1\times\partial X_2\times\{0\}$ is projection to the $\partial X_1$-coordinate. \\
\item The restriction to $\partial X_1\times\partial X_2\times\{\pi/2\}$ is projection to the $\partial X_2$-coordinate. \\
\item The restriction to $\partial X_1\cup\partial X_2\times(0,\pi/2)$ is a homeomorphic embedding. \\
\end{itemize}
Choose any $\zeta_1\in\partial X_1$ and let $U\subset\partial X_1$ be an open neighborhood of $\zeta_1$
(in the cone topology).  Then $U'=q\bigl(U\times\partial X_2\times[0,\pi/4)\bigr)$ is a nonempty open set whose intersection
with $\partial X_1$ is $U$.  By hypothesis, there is a round sphere $K$ which intersects $U'$ at a point,
say $q(\eta_1,\eta_2,t)$.  By the previous corollary, we know that $K=K_1\ast K_2$ where $K_1$ is a round sphere
in $\partial X_1$ and $K_2$ is a round sphere in $\partial X_2$.  Therefore $U\cap K_1$ contains the point $\eta_1$.
\end{proof}

\RalfSubsection{A Splitting Lemma}

In this section we prove a strengthened version of Theorem~\ref{spatziers lemma} from the Introduction. In particular, by Corollary~\ref{density of round spheres} and Lemma~\ref{density in factors}, this theorem applies to irreducible factors of a CAT(0) space with a geometric group action, which is precisely what we need for the proof of Theorem~\ref{main}.

\begin{Thm}
\label{spatziers}
Let $X$ be a proper CAT(0) space, where $\partial X$ has a dense family of round spheres.  Suppose
$\partial X$ contains a proper subset $P$ that is $\pi$-convex and involutive and closed in the
cone topology.  Then $\partial X$ splits as the spherical join $\partial X=P\ast P^\perp$
where $P^\perp$ is the set of points that have Tits distance exactly $\pi/2$ from all points in $P$.
\end{Thm}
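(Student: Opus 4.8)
The plan is to show that every point $\xi \in \partial X$ either lies in $P$, or lies in $P^\perp$, or lies on a Tits geodesic of length exactly $\pi/2$ from a point of $P^\perp$ to a point of $P$, and moreover that the Tits distance from $\xi$ to $P$ is realized in a way compatible with the join structure. First I would set $\mathrm{dist}_{Tits}(\xi, P) = \theta(\xi)$ and show $\theta(\xi) \le \pi/2$ for all $\xi$: this is where the involutivity of $P$ enters, since if $\theta(\xi) > \pi/2$ then $\xi$ would be too far from some antipode considerations — more precisely, I expect to argue that a point at Tits distance $>\pi/2$ from all of $P$, together with an antipode, forces a Tits-geodesic configuration contradicting $\pi$-convexity of $P$, or forces $P$ to fail to be involutive. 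I would then want to show the minimum $\theta(\xi)$ is attained at a unique point $p(\xi) \in P$ (using that $P$ is closed in the cone topology, hence compact, hence Tits-complete closed, and that Tits balls of radius $<\pi$ are convex in a CAT(1) space so nearest-point projection onto the $\pi$-convex complete set $P$ is well-defined and unique).

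Next, the key geometric input: I would use the density of round spheres to promote local/pointwise statements to the global join decomposition. Given $\xi$ with $0 < \theta(\xi) < \pi/2$, pick a round sphere $\Sigma$ close (in the cone topology) to $\xi$; by Lemma~\ref{antipodes are} and Kleiner's theorem, $\Sigma$ contains antipodes of $p(\xi)$ and of $\xi$, and inside the round sphere $\Sigma$ I can do explicit spherical geometry. The goal is to show the direction from $\xi$ toward $p(\xi)$ and the direction from $\xi$ "away" from $P$ span a right angle, i.e. that $\xi$ genuinely sits on a $\pi/2$-geodesic joining $p(\xi) \in P$ to a well-defined point $\nu(\xi) \in P^\perp$. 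I would verify $\nu(\xi) \in P^\perp$ by checking $d_{Tits}(\nu(\xi), p) = \pi/2$ for every $p \in P$: the inequality $\ge \pi/2$ should come from $\pi$-convexity (if some $p$ were closer, the geodesic $[p, \nu(\xi)]$ would pass near $\xi$ and contradict $\theta(\xi) = d_{Tits}(\xi,P)$), and $\le \pi/2$ from comparing with the already-established bound and the spherical law of cosines along $[p(\xi), \xi, \nu(\xi)]$.

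Having produced the map $\xi \mapsto (p(\xi), \nu(\xi), \theta(\xi))$, I would check it is the inverse of the join parametrization $q\colon P \times P^\perp \times [0,\pi/2] \to P \ast P^\perp$, i.e. that it is a bijection and an isometry for the Tits metrics, and continuous in the cone topology — the continuity and surjectivity again leaning on density of round spheres to fill in points, and on lower semicontinuity of $d_{Tits}$. Finally I would note that $P^\perp$ is $\pi$-convex (a geodesic between two points all of whose points are $\pi/2$ from $P$ stays $\pi/2$ from $P$ by convexity of the distance function) and Tits-closed, so the decomposition $\partial X = P \ast P^\perp$ is a genuine spherical join of CAT(1) spaces.

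The main obstacle I anticipate is the step establishing that the nearest point $p(\xi)$ is attained, unique, and that the right-angle condition holds at $\xi$ — in a general (non-geodesically-complete) CAT(1) boundary one cannot freely extend geodesics, so the argument must route everything through the genuinely complete round spheres supplied by Kleiner's theorem and through $\pi$-convexity of $P$, rather than through links or local geodesic completeness as in Lytchak's original argument. Managing the interplay between the cone topology (in which $P$ is compact and $d_{Tits}$ is only lower semicontinuous) and the Tits topology (in which the join structure lives) will be the delicate bookkeeping.
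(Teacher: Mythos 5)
There is a genuine gap at the heart of your plan. You propose to work directly at an arbitrary boundary point $\xi$: project it to a nearest point $p(\xi)\in P$, produce an ``orthogonal'' point $\nu(\xi)\in P^\perp$, and verify a right-angle condition at $\xi$, using a round sphere $\Sigma$ chosen close to $\xi$ in the cone topology to carry out the spherical trigonometry. But cone-proximity gives you no Tits-metric control whatsoever: $d_{Tits}$ is only lower semicontinuous in the cone topology, $\xi$ need not lie in (or even be Tits-close to) any round sphere, and in a boundary that is not geodesically complete you cannot extend the geodesic $[p(\xi),\xi]$ past $\xi$ to reach a candidate $\nu(\xi)$. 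So the steps you yourself flag as the main obstacle --- attainment of the projection, the right angle at $\xi$, and the existence of $\nu(\xi)$ --- are exactly the ones your outline does not supply a mechanism for; ``doing explicit spherical geometry inside $\Sigma$'' does not transfer any conclusion to $\xi$, since no estimate connects the geometry of $\Sigma$ to the geometry at $\xi$. (Your preliminary claim that $d_{Tits}(\xi,P)\le\pi/2$ via involutivity is likewise left as a hoped-for contradiction rather than an argument, and the uniqueness and continuity of $\xi\mapsto(p(\xi),\nu(\xi))$ that your join-parametrization step requires would be a further substantial burden.)

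The paper's proof avoids working at $\xi$ entirely, and this is the idea you are missing. One first shows two small facts: for any round sphere $K$, the set $P\cap K$ is nonempty (Balser--Lytchak antipodes plus involutivity) and $P^\perp\cap K$ coincides with the perpendicular set of $P\cap K$ computed inside $K$; and $P^\perp$ is closed in the cone topology. Then, given $x\in\partial X$, one picks $x_n\in K_n$ with $x_n\to x$ in the cone topology (density of round spheres). Inside each round sphere $K_n$, the set $P_n=P\cap K_n$ is a proper closed involutive $\pi$-convex subset, hence a subsphere, so $K_n=P_n\ast P_n^\perp$ by classical spherical geometry, and one can write $x_n\in[y_n,z_n]$ with $y_n\in P_n$, $z_n\in P_n^\perp$ and $d(y_n,x_n)+d(x_n,z_n)=\pi/2$. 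Passing to cone-convergent subsequences $y_n\to y$, $z_n\to z$ and using that $P$ and $P^\perp$ are cone-closed gives $y\in P$, $z\in P^\perp$; lower semicontinuity of $d_{Tits}$ then forces $d(y,x)+d(x,z)=d(y,z)=\pi/2$, so $x$ lies on a geodesic from $P$ to $P^\perp$ of length $\pi/2$. In short, the right angles and geodesics are produced only inside genuine round spheres, and the general point is reached by a cone-topology limit --- no projection map, no geodesic extension at $x$, and no continuity of a parametrization are ever needed. Your proposal identifies the correct ingredients (round spheres, antipodes, involutivity, closedness, lower semicontinuity) but not the limiting argument that makes them suffice.
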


Recall that for a subset $P$ of a metric space $X$,
\[
	P^\perp=\bigl\{\;x\in\partial X\;\big|\;d_{Tits}(x,y)=\frac{\pi}{2}\;\textrm{ for all }\;y\in P\;\bigr\}.
\]

To prove Theorem~\ref{spatziers} we first need to establish some lemmas.

\begin{lemma}
Let $P$ be an involutive subset of $\partial X$ and $K$ be a round sphere in $\partial X$.
Define
\[
	P^\perp_K
		=
	\bigl\{\;x\in K\;\big|\;d_{Tits}(x,y)=\frac{\pi}{2}\;\textrm{ for all }\;y\in P\cap K\;\bigr\}.
\]
Then $P\cap K$ is nonempty and $P^\perp_K=P^\perp\cap K$.
\end{lemma}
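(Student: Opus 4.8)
The plan is to establish the two assertions separately; the substance lies in the inclusion $P^\perp_K\subseteq P^\perp\cap K$. For nonemptiness of $P\cap K$: since $P$ is involutive it is in particular nonempty, so fix $p\in P$; by Lemma~\ref{antipodes are}, applied to the round sphere $K$ in the finite-dimensional CAT(1) space $\partial_{Tits}X$, there is a point $q\in K$ with $d_{Tits}(p,q)\ge\pi$, i.e.\ an antipode of $p$; since $P$ is involutive, $q\in P$, so $q\in P\cap K$. The inclusion $P^\perp\cap K\subseteq P^\perp_K$ is then immediate from the definitions, since $P\cap K\subseteq P$.

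For the inclusion $P^\perp_K\subseteq P^\perp\cap K$, fix $x\in P^\perp_K$ (so $x\in K$) and an arbitrary $z\in P$; the goal is $d_{Tits}(x,z)=\pi/2$. Since $K$ is isometrically embedded, $d_K(x,y)=d_{Tits}(x,y)=\pi/2$ for every $y\in P\cap K$, so $P\cap K$ lies in the equatorial great $(d-1)$-sphere $E_x=\{y\in K:d_K(x,y)=\pi/2\}$ of $K$ relative to $x$. Hence the $K$-antipode $x':=a_K(x)$ also satisfies $d_{Tits}(x',y)=\pi/2$ for every $y\in P\cap K$, so $x'\in P^\perp_K$; moreover $d_{Tits}(x,x')=d_K(x,x')=\pi$, so $x$ and $x'$ are antipodal in $\partial_{Tits}X$.

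Next I would bound $d_{Tits}(x,z)$ on both sides. By Lemma~\ref{antipodes are}, $z$ has an antipode $z'\in K$; by involutivity $z'\in P$, hence $z'\in P\cap K$ and $d_{Tits}(x,z')=d_{Tits}(x',z')=\pi/2$. The triangle inequality then gives $d_{Tits}(x,z)\ge d_{Tits}(z,z')-d_{Tits}(z',x)\ge\pi-\pi/2=\pi/2$, and likewise $d_{Tits}(x',z)\ge\pi/2$. Also $d_{Tits}(x,z)<\pi$: otherwise $x$ would be an antipode of $z\in P$, forcing $x\in P$ and hence $x\in P\cap K$, which gives the absurdity $d_{Tits}(x,x)=\pi/2$; the same reasoning yields $d_{Tits}(x',z)<\pi$. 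It therefore suffices to prove that $d_{Tits}(x,z)+d_{Tits}(x',z)=\pi$ — equivalently, that $z$ lies on a Tits geodesic from $x$ to its antipode $x'$ — since then both distances, being at least $\pi/2$, are forced to equal $\pi/2$.

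This last statement is the main obstacle. Inside $K$ it is automatic: the meridian of $K$ from $x$ through $z'$ to $x'$ is a Tits geodesic of length $\pi$ with $z'$ at its midpoint. To handle an arbitrary $z\in P$ I would lean on involutivity — all antipodes of $z$ lie in $P$, and by Lemma~\ref{antipodes are} every round sphere contains an antipode of $z$ — together with the antipodal pair $x,x'\in P^\perp_K$, in order to promote the inequality $d_{Tits}(x,z)+d_{Tits}(x',z)\ge\pi$ to an equality. I expect this transfer from the round-sphere configuration (where $z'$ sits symmetrically between $x$ and $x'$) to a general $z\in P$ to be the delicate step; once it is done, the lemma is proved.
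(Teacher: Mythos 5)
Your treatment of the easy parts is fine (nonemptiness via Lemma~\ref{antipodes are} plus involutivity, the trivial inclusion $P^\perp\cap K\subseteq P^\perp_K$, the bounds $d_{Tits}(x,z)\ge\pi/2$ and $d_{Tits}(x,z)<\pi$), but the proof is not complete: you explicitly leave unproven the step you yourself identify as ``the main obstacle,'' namely that an arbitrary $z\in P$ satisfies $d_{Tits}(x,z)+d_{Tits}(x',z)=\pi$, i.e.\ lies between $x$ and its $K$-antipode $x'$. That reduction does not make the problem easier: the inequality $d_{Tits}(x,z)+d_{Tits}(x',z)\ge\pi$ is just the triangle inequality, while the needed upper bound $\le\pi$ is exactly as hard as the original claim (it is true only a posteriori, because both distances turn out to equal $\pi/2$), and nothing in your outline produces it. So as written there is a genuine gap, and the detour through the antipodal pair $x,x'$ is a step in a direction the argument does not need.

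The missing idea, which is the heart of the paper's proof, is to work with the geodesic $[z,x]$ itself rather than with $x'$: since $d_{Tits}(z,x)<\pi$ (which you did establish), the Tits geodesic $[z,x]$ exists, and one extends it past $x$ \emph{inside the round sphere} $K$ to a geodesic $[z,w]$ of total length $\pi$ with $w\in K$. (The extension is available because $K$ is a top-dimensional round sphere: the link of $x$ in $K$ is a round sphere in the link of $x$ in $\partial_{Tits}X$, so by Lemma~\ref{antipodes are} the incoming direction of $[z,x]$ at $x$ has an antipodal direction lying in $K$, and a local geodesic of length $\le\pi$ in a CAT(1) space is a genuine geodesic.) Then $w$ is an antipode of $z$, so $w\in P$ by involutivity, hence $w\in P\cap K$ and $d_{Tits}(x,w)=\pi/2$ by the definition of $P^\perp_K$; since $d_{Tits}(z,x)+d_{Tits}(x,w)=\pi$, this gives $d_{Tits}(z,x)=\pi/2$ directly, with no need for the configuration involving $x'$. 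Without some version of this extension-into-$K$ argument (or an equivalent mechanism for producing an antipode of $z$ in $K$ aligned with $x$), your outline does not close.
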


\begin{proof}
By Lemma~\ref{antipodes are}, every point of $P$ has an antipode in $K$.  Thus $P\cap K$ is nonempty.
It is clear that $P^\perp\cap K\subset P^\perp_K$.
To prove the converse, let $x\in P^\perp_K$ and $y\in P$.
Draw the geodesic $[y,x]$ and extend inside $K$ to get a geodesic
$[y,z]$ of length $\pi$ that passes through $x$. Since $P$ is involutive, $z \in P$. Thus $d(x,z)=\pi/2$ by definition of $P^\perp_K$ and so $d(x,y)=\pi/2$ as well.  
\end{proof}

\begin{lemma}
If $P\subset\partial X$ is an involutive set, then $P^\perp$ is closed in the cone topology.
\end{lemma}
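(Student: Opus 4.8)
The plan is to show that the complement of $P^\perp$ is open in the cone topology. Suppose $x_0 \in \partial X \setminus P^\perp$. Then there is some $y_0 \in P$ with $d_{Tits}(x_0, y_0) \neq \pi/2$; I would handle the two cases $d_{Tits}(x_0,y_0) < \pi/2$ and $d_{Tits}(x_0,y_0) > \pi/2$ together by producing a cone-neighborhood of $x_0$ on which the Tits distance to $y_0$ stays off $\pi/2$.

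First I would recall that $d_{Tits}$ is lower semicontinuous in the cone topology (Bridson--Haefliger, Proposition II.9.5(2), cited earlier). So if $d_{Tits}(x_0, y_0) > \pi/2$, pick $\epsilon$ with $\pi/2 < \pi/2 + \epsilon < d_{Tits}(x_0,y_0)$; by lower semicontinuity there is a cone-neighborhood $U$ of $x_0$ on which $d_{Tits}(\cdot, y_0) > \pi/2 + \epsilon > \pi/2$, hence $U \cap P^\perp = \emptyset$. This case is immediate. The genuinely delicate case is $d_{Tits}(x_0, y_0) < \pi/2$, where I need an \emph{upper} bound on the Tits distance to persist on a cone-neighborhood — and Tits distance is only lower semicontinuous, not upper semicontinuous. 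The trick is that when $d_{Tits}(x_0, y_0) < \pi$ the Tits distance equals the Alexandrov angle $\angle_{x}(x_0, y_0)$ computed from any basepoint $x \in X$ (Remark II.9.19(2)), and the Alexandrov angle between geodesic rays is upper semicontinuous in the cone topology: if $x_n \to x_0$ in the cone topology then the rays $[x, x_n]$ converge uniformly on compacts to $[x, x_0]$, and $\angle_x(\cdot, y_0) = \lim_{t\to 0}\overline\angle_x(\cdot(t), \beta(t))$ is a decreasing limit (by (\ref{angle function})) of functions that are continuous in the endpoint, hence $\limsup_n \angle_x(x_n, y_0) \le \angle_x(x_0, y_0)$. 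So fix any $x \in X$ and $\delta$ with $d_{Tits}(x_0,y_0) < \pi/2 - \delta$; there is a cone-neighborhood $U$ of $x_0$ on which $\angle_x(\cdot, y_0) < \pi/2 - \delta/2 < \pi/2$, and since this angle is below $\pi$ it equals $d_{Tits}(\cdot, y_0)$, so again $U \cap P^\perp = \emptyset$.

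In either case we have produced a cone-open set $U$ containing $x_0$ and disjoint from $P^\perp$, so $\partial X \setminus P^\perp$ is open and $P^\perp$ is closed. The main obstacle is exactly the second case: one must not be lulled into thinking lower semicontinuity of $d_{Tits}$ suffices, and instead must pass to the Alexandrov angle description, which behaves upper-semicontinuously because it arises as a \emph{monotone decreasing} limit (as $t \to 0$) of comparison angles that depend continuously on the rays. It is worth noting the argument does not use the density of round spheres hypothesis or involutivity of $P$ in any essential way beyond what is already in the statement — the conclusion holds for any subset $P$ at all, intersected appropriately — but stating it for involutive $P$ is all that is needed downstream.
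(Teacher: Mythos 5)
Your Case 1 is fine (that direction really is just lower semicontinuity of $d_{Tits}$), and your observation that $\angle_x(\cdot,y_0)$ at a \emph{fixed} basepoint $x$ is upper semicontinuous in the cone topology is also correct, since it is an infimum over $t>0$ of comparison angles that depend continuously on the rays. The fatal step is the identification you lean on in Case 2: it is \emph{not} true that $d_{Tits}(\cdot,y_0)=\angle_x(\cdot,y_0)$ for a fixed basepoint $x$ whenever the Tits distance is less than $\pi$. The cited Remark II.9.19(2) says that the Tits length metric agrees with the Tits \emph{angle} $\angle_{Tits}$ when the latter is $<\pi$, and $\angle_{Tits}(\zeta,\eta)$ is the supremum over all basepoints of $\angle_x(\zeta,\eta)$ (equivalently the limit of comparison angles as $t\to\infty$, as in (\ref{titsangles})), not the angle at any single point. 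The angle at one basepoint is only a lower bound for the Tits distance: in the hyperbolic plane two rays from $x$ making an arbitrarily small angle represent boundary points at Tits distance $\ge\pi$. So from $\angle_x(x',y_0)<\pi/2-\delta/2$ on a cone-neighborhood $U$ you cannot conclude $d_{Tits}(x',y_0)<\pi/2$ on $U$; points of $U$ may still lie in $P^\perp$. This is not repairable by choosing $x$ better: $d_{Tits}$ genuinely fails to be upper semicontinuous in the cone topology (Tits distances can drop in a cone limit), so no argument that tries to propagate an upper bound on $d_{Tits}$ to a cone-neighborhood using only metric semicontinuity can succeed.

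Your closing remark that involutivity is irrelevant is therefore unsupported, and it is exactly where your route diverges from the paper's. The paper argues sequentially: if $\zeta_n\in P^\perp$ converges to $\zeta$ in the cone topology, lower semicontinuity gives $d_{Tits}(\zeta,\eta)\le\pi/2$ for every $\eta\in P$; then geodesic completeness supplies antipodes, and involutivity of $P$ is what converts this one-sided bound into equality: an antipode $\eta'$ of $\eta$ again lies in $P$, so the same semicontinuity argument gives $d_{Tits}(\zeta,\eta')\le\pi/2$, and the triangle inequality together with $d_{Tits}(\eta,\eta')\ge\pi$ forces $d_{Tits}(\zeta,\eta)\ge\pi/2$. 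The missing lower bound in your proof has to come from this antipode mechanism (or something equivalent), not from an upper-semicontinuity statement for the Tits metric, which is false.
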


\begin{proof}
Let $\{\zeta_n\}\subset P^\perp$ be a sequence of points converging to a point $\zeta\in\partial X$. 
Then for every $\eta\in P$, $d(\zeta,\eta)\le\pi/2$ since the Tits metric is lower
semicontinuous in the cone topology.  But since $X$ is geodesically complete,
$\zeta$ has an antipode $\zeta'$.  Since $d(\zeta',\eta)\le\pi/2$, we must have
$d(\zeta,\eta)=\pi/2$.
\end{proof}

\begin{proof}[Proof of Theorem~\ref{spatziers}]
Our goal is to prove that every $x\in\partial X$ lies between points $y\in P$ and $z\in P^\perp$.
Let $x\in\partial X$ be given.  By assumption
we may choose round spheres $K_n$ and $x_n\in K_n$ such that $x_n\to x$.
If for large enough $n$, $P\supset K_n$, then $x\in P$ and we are done.  On the other hand, every $P_n=P\cap K_n$ is
nonempty by the lemma above.  So $P_n$ is a proper closed involutive
$\pi$-convex subset of $K_n$, hence a subsphere, hence $K_n=P_n\ast P^\perp_n$ where $P_n^\perp=P^\perp\cap K_n$.
Choose $y_n\in P_n$ and $z_n\in P_n^\perp$ such that $x_n\in[y_n,z_n]\subset K_n$.  Since $\partial X$ is compact,
we may pass to subsequences so that $y_n\to y\in\partial X$ and $z_n\to z\in\partial X$.  Since $P$ and $P^\perp$ are both
closed, $y\in P$ and $z\in P^\perp$.

It remains to verify that $d(y,x)+d(x,z)=\pi/2$.
Since the Tits metric is lower semicontinuous in the cone topology,
\[
	\pi/2=d(y,z) \le d(y,x)+d(x,z) \le \liminf d(y_n,x_n)+d(x_n,z_n) = \pi/2.
\]
\end{proof}

\begin{proof}[Proof of Theorem~\ref{spatziers lemma}]

By Corollary~\ref{density of round spheres} there is a dense family of round spheres in $\partial X$. Hence the claim follows from Theorem~\ref{spatziers}.
\end{proof}

\RalfSubsection{Almost-Flat Triangles}

Recall that the boundary of a flat sector in $X$ is a Tits geodesic in $\partial_{Tits}X$ by \cite[Corollary~II.9.9]{Bridson1999}.
While the converse is not true in general, it is true approximately
as we will see in this section.
We will need two lemmas in Euclidean geometry:
the well-known Alexandrov Lemma and a controlled version.

\begin{lemma}\cite[Lemma~I.2.16]{Bridson1999}
\label{Aleksandrov}
Let
	$\overline x$,
	$\overline y$,
	$\overline z$,
	$\overline w$,
	$\widetilde x$,
	$\widetilde y$,
	$\widetilde z$, and
	$\widetilde w$ be points in $\E^2$ such that 
	
	\begin{enumerate}
	\item $\overline w$ is between $\overline y$ and $\overline z$,
	\item $\widetilde z$ and $\widetilde y$ are on opposite sides
of the line passing through $\widetilde x$ and $\widetilde w$, 

\item $d(\widetilde y,\widetilde x)
	+d(\widetilde x,\widetilde z)
		\ge
d(\widetilde y,\widetilde w)
	+d(\widetilde w,\widetilde z), $
\item $d(\overline x,\overline y) =d(\widetilde x,\widetilde y), 
d(\overline x,\overline z) =d(\widetilde x,\widetilde z), 
d(\overline y,\overline w) =d(\widetilde y,\widetilde w), 
d(\overline w,\overline z)=d(\widetilde w,\widetilde z), $ and
\item $\pi
	\le
		\angle_{\widetilde{w}} (\widetilde{x}, \widetilde{y})+\angle_{\widetilde{w}} (\widetilde{x}, \widetilde{z}).$ \\
Then
\item $\angle_{\overline x}(\overline y,\overline z) \ge
		\angle_{\widetilde x}(\widetilde y,\widetilde w)
			+
		\angle_{\widetilde x}(\widetilde w,\widetilde z), $
\item $\angle_{\overline y}(\overline x,\overline w) \ge
		\angle_{\widetilde y}(\widetilde x,\widetilde w), $
\item $\angle_{\overline z}(\overline x,\overline w) \ge
		\angle_{\widetilde z}(\widetilde x,\widetilde w), $ and
\item $
	d(\overline x,\overline w) \ge d(\widetilde x,\widetilde w).$
\end{enumerate}
\end{lemma}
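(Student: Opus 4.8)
The plan is to realize the ``straightened'' configuration $\overline x,\overline y,\overline z,\overline w$ concretely in $\E^2$ and then to deduce each conclusion by comparing two Euclidean triangles that share a pair of sides, using the elementary fact that in a Euclidean triangle the angle at a vertex is a strictly increasing function of the length of the opposite side once the two adjacent side lengths are fixed (immediate from the law of cosines). First I would construct the points: place $\overline y,\overline w,\overline z$ on a line with $\overline w$ between $\overline y$ and $\overline z$, with $d(\overline y,\overline w)=d(\widetilde y,\widetilde w)$ and $d(\overline w,\overline z)=d(\widetilde w,\widetilde z)$, and then take $\overline x$ on a chosen side of this line with $d(\overline x,\overline y)=d(\widetilde x,\widetilde y)$ and $d(\overline x,\overline z)=d(\widetilde x,\widetilde z)$. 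Such an $\overline x$ exists exactly because hypothesis~(3) reads $d(\widetilde x,\widetilde y)+d(\widetilde x,\widetilde z)\ge d(\widetilde y,\widetilde w)+d(\widetilde w,\widetilde z)=d(\overline y,\overline z)$, the other triangle inequality being automatic from the given data; hypotheses~(1) and~(4) then hold by construction.

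For conclusion~(6) I would first observe that $\angle_{\widetilde x}(\widetilde y,\widetilde w)+\angle_{\widetilde x}(\widetilde w,\widetilde z)\le\pi$: summing the angle-sum identities (each equal to $\pi$) for the triangles $\triangle\widetilde x\widetilde y\widetilde w$ and $\triangle\widetilde x\widetilde w\widetilde z$ and invoking hypothesis~(5) together with the nonnegativity of the two remaining angles gives the bound. Combined with hypothesis~(2), this shows the ray $[\widetilde x,\widetilde w)$ lies between $[\widetilde x,\widetilde y)$ and $[\widetilde x,\widetilde z)$, so $\angle_{\widetilde x}(\widetilde y,\widetilde w)+\angle_{\widetilde x}(\widetilde w,\widetilde z)=\angle_{\widetilde x}(\widetilde y,\widetilde z)$. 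Now the triangles $\triangle\widetilde x\widetilde y\widetilde z$ and $\triangle\overline x\overline y\overline z$ have the same two sides issuing from the ``$x$''-vertex, while $d(\widetilde y,\widetilde z)\le d(\widetilde y,\widetilde w)+d(\widetilde w,\widetilde z)=d(\overline y,\overline z)$, so by the monotonicity fact $\angle_{\overline x}(\overline y,\overline z)\ge\angle_{\widetilde x}(\widetilde y,\widetilde z)$, which is conclusion~(6).

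For~(7)--(9) the essential point is $d(\overline x,\overline w)\ge d(\widetilde x,\widetilde w)$, and here a naive hinge comparison fails because the cosine of the angle at the shared vertex is not a monotone function of the shared edge length. Instead I would argue by a deformation: for each $s$ for which the two triangles with side lengths $\{d(\widetilde x,\widetilde y),d(\widetilde y,\widetilde w),s\}$ and $\{d(\widetilde x,\widetilde z),d(\widetilde w,\widetilde z),s\}$ are nondegenerate, glue them along their common side of length $s$ with the free vertices on opposite sides, and let $h(s)$ be the resulting total angle at the shared vertex; then $h$ is continuous, $h(d(\widetilde x,\widetilde w))\ge\pi$ by~(5), and as $s$ increases to $\min\{d(\widetilde x,\widetilde y)+d(\widetilde y,\widetilde w),\,d(\widetilde x,\widetilde z)+d(\widetilde w,\widetilde z)\}$ one of the triangles degenerates and $h(s)$ tends to a value $<\pi$. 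By the intermediate value theorem $h(s_1)=\pi$ for some $s_1\ge d(\widetilde x,\widetilde w)$; at that parameter the free vertices are colinear with the shared vertex between them, so by SSS the glued figure is congruent to the configuration $\overline x\overline y\overline z\overline w$, giving $d(\overline x,\overline w)=s_1\ge d(\widetilde x,\widetilde w)$, i.e.\ conclusion~(9). Conclusions~(7) and~(8) then follow by comparing $\triangle\overline x\overline y\overline w$ with $\triangle\widetilde x\widetilde y\widetilde w$ (resp.\ $\triangle\overline x\overline z\overline w$ with $\triangle\widetilde x\widetilde z\widetilde w$): these share the two sides at the ``$y$''-vertex (resp.\ the ``$z$''-vertex), and the opposite side has length $d(\overline x,\overline w)\ge d(\widetilde x,\widetilde w)$, so by monotonicity the angle there has not decreased. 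The main obstacle is exactly this step---getting $d(\overline x,\overline w)\ge d(\widetilde x,\widetilde w)$ despite the lack of monotonicity, which forces the intermediate-value argument---together with a routine separate check of the degenerate limiting configurations.
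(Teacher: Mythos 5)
Your argument is correct, and it is worth noting that the paper itself offers no proof of this statement: it is quoted verbatim from Bridson--Haefliger (Lemma~I.2.16), so there is no in-paper argument to compare against. Measured against the standard proof of Alexandrov's lemma, your route differs in one place. The usual argument ``unfolds'' at $\widetilde w$: one takes $z'$ on the ray from $\widetilde y$ through $\widetilde w$ with $d(\widetilde w,z')=d(\widetilde w,\widetilde z)$, notes that hypothesis~(5) gives $\angle_{\widetilde w}(\widetilde x,z')=\pi-\angle_{\widetilde w}(\widetilde x,\widetilde y)\le\angle_{\widetilde w}(\widetilde x,\widetilde z)$, and applies the legitimate hinge comparison at $\widetilde w$ (two adjacent sides fixed, angle versus opposite side) to get $d(\widetilde x,z')\le d(\widetilde x,\widetilde z)=d(\overline x,\overline z)$; comparing $\triangle(\widetilde x,\widetilde y,z')$ with $\triangle(\overline x,\overline y,\overline z)$, which share the sides at the $y$-vertex, then yields~(7) directly, and a second hinge comparison at the $y$-vertex gives~(9), with~(8) by symmetry. (Incidentally, this unfolding also shows hypothesis~(3) is a consequence of~(5) in the Euclidean case.) You are right that the ``naive'' hinge at the shared $xw$-edge is not monotone, and your intermediate-value deformation of the glued figure is a valid substitute: $h$ is continuous on $[d(\widetilde x,\widetilde w),s_{\max}]$, $h\ge\pi$ at the left end by~(5), and at $s_{\max}$ one angle vanishes so $h\le\pi$ (you only need $\le\pi$ there, not the strict inequality you assert, and $\le$ is what the IVT requires); at the parameter where $h=\pi$ the SSS identification with the bar configuration gives~(9), and your derivations of~(6) via the angle sum at $\widetilde x$ and of~(7)--(8) from~(9) are correct. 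The only loose ends are the degenerate configurations (coincident points or collapsed triangles), which, as you say, are routine.
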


The next lemma is a modified version of the Alexandrov Lemma, which gives a lower
bound on $d(\widetilde x,\widetilde w)$ under additional hypotheses.

\begin{lemma}[Controlled Alexandrov Lemma]
\label{approximate alex lemma}
Let $0<\theta<\pi$ be fixed.  Given $\epsilon>0$, there is a $\delta>0$
such that whenever $\overline x$, $\overline y$, $\overline z$, $\overline w$,
$\widetilde x$, $\widetilde y$, $\widetilde z$, and $\widetilde w\in\E^2$ satisfy
the conditions of the Alexandrov lemma and in addition:
\begin{itemize}
\item $d(\overline x,\overline y)=d(\overline x,\overline z)=1$,
\item $\overline w$ is the midpoint of the segment $[\overline z,\overline y]$,
\item $\theta-\delta\le \angle_{\widetilde x}(\widetilde y,\widetilde z)
	\le \angle_{\overline x}(\overline y,\overline z)\le\theta$
\end{itemize}
then $d(\overline x,\overline w)-\epsilon\le d(\widetilde x,\widetilde w).$
\end{lemma}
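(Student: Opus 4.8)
The plan is to collapse the eight‑point hypothesis into an inequality in one real variable and then finish by a continuity argument for the cosine. First I would pin down the barred configuration: since $d(\overline x,\overline y)=d(\overline x,\overline z)=1$ and $\overline w$ is the midpoint of $[\overline y,\overline z]$, the comparison triangle $\overline x\,\overline y\,\overline z$ is isosceles with apex $\overline x$, so $\overline w$ is the foot of the altitude from $\overline x$. Writing $\alpha=\angle_{\overline x}(\overline y,\overline z)\le\theta<\pi$, this gives $d(\overline x,\overline w)=\cos(\alpha/2)$ and $d(\overline y,\overline w)=d(\overline z,\overline w)=\sin(\alpha/2)=:s<1$. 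By condition~(4) of Lemma~\ref{Aleksandrov} the tilde configuration then consists of two Euclidean triangles $\widetilde x\,\widetilde y\,\widetilde w$ and $\widetilde x\,\widetilde z\,\widetilde w$ glued along the common edge $[\widetilde x,\widetilde w]$, each with the same three side lengths $1$, $s$, and $t:=d(\widetilde x,\widetilde w)$. Placing $\widetilde x$ at the origin and $\widetilde w$ at $(t,0)$ (note $t>0$, else $s=1$), the two distance constraints (distance $1$ from $\widetilde x$, distance $s$ from $\widetilde w$) pin each of $\widetilde y,\widetilde z$ to one of the two intersection points of a pair of circles symmetric across the $x$-axis, and by condition~(2) they take opposite ones: $\widetilde y=(g(t),b)$, $\widetilde z=(g(t),-b)$ with $b>0$ and
\[
	g(t)=\frac{1-s^2+t^2}{2t}=\frac{\cos^2(\alpha/2)+t^2}{2t}>0,
\]
which by the law of cosines is the cosine of the half-angle $\angle_{\widetilde x}(\widetilde w,\widetilde y)$. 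Since $g(t)>0$, the ray from $\widetilde x$ toward $\widetilde w$ lies strictly between the rays toward $\widetilde y$ and toward $\widetilde z$, so $\angle_{\widetilde x}(\widetilde y,\widetilde z)=2\arccos g(t)$.

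Next I would translate the remaining hypotheses. The hypothesis $\angle_{\widetilde x}(\widetilde y,\widetilde z)\ge\theta-\delta$ becomes $g(t)\le\cos\!\bigl(\tfrac{\theta-\delta}{2}\bigr)$, and conclusion~(9) of Lemma~\ref{Aleksandrov} gives $t\le d(\overline x,\overline w)=\cos(\alpha/2)\le1$. The algebraic crux is the identity
\[
	g(t)-\cos(\alpha/2)=\frac{\bigl(t-\cos(\alpha/2)\bigr)^2}{2t},
\]
which is immediate from $1-s^2=\cos^2(\alpha/2)$. Combining it with the two inequalities above and with $\cos(\alpha/2)\ge\cos(\theta/2)$ (from $\alpha\le\theta$) yields
\[
	\bigl(d(\overline x,\overline w)-d(\widetilde x,\widetilde w)\bigr)^2=\bigl(\cos(\alpha/2)-t\bigr)^2\le 2t\Bigl(\cos\tfrac{\theta-\delta}{2}-\cos\tfrac{\alpha}{2}\Bigr)\le 2\Bigl(\cos\tfrac{\theta-\delta}{2}-\cos\tfrac{\theta}{2}\Bigr).
\]

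Finally, since $0<\theta<\pi$ the quantity $\cos\tfrac{\theta-\delta}{2}-\cos\tfrac{\theta}{2}$ is nonnegative for $0<\delta<\theta$ and tends to $0$ as $\delta\to 0^+$, so I would choose $\delta\in(0,\theta)$ with $2\bigl(\cos\tfrac{\theta-\delta}{2}-\cos\tfrac{\theta}{2}\bigr)\le\epsilon^2$; then $d(\overline x,\overline w)-d(\widetilde x,\widetilde w)\le\epsilon$, which is the claim. The step I expect to be the real work is the first paragraph: one must check carefully, from the congruence of the two tilde triangles together with condition~(2), that $\angle_{\widetilde x}(\widetilde y,\widetilde z)$ is the \emph{sum} $\angle_{\widetilde x}(\widetilde y,\widetilde w)+\angle_{\widetilde x}(\widetilde w,\widetilde z)$ and not their difference, and that $t$ stays on the branch where $g$ is decreasing (equivalently $t\le\cos(\alpha/2)$), which is exactly what conclusion~(9) — equivalently hypothesis~(5) — of the Alexandrov Lemma supplies. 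Once the configuration is pinned down, the rest is the elementary estimate displayed above.
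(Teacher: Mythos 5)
Your argument is correct, but it takes a genuinely different route from the paper. The paper's proof is soft: it normalizes $\widetilde x=\overline x$, $\widetilde y=\overline y$, recovers $\widetilde w$ as the intersection point of two circles selected by hypothesis (5) of Lemma~\ref{Aleksandrov}, and then argues that $d(\widetilde w,\overline w)$ is a (uniformly) continuous function of $(\widetilde z,\overline z)$ on a compact parameter set, vanishing when $\widetilde z=\overline z$; no explicit $\delta$ is produced. You instead exploit the special isosceles/congruent structure to reduce everything to the single variable $t=d(\widetilde x,\widetilde w)$, and the identity $g(t)-\cos(\alpha/2)=\bigl(t-\cos(\alpha/2)\bigr)^2/(2t)$ converts the angle hypothesis directly into the quantitative bound $\bigl(d(\overline x,\overline w)-t\bigr)^2\le 2\bigl(\cos\tfrac{\theta-\delta}{2}-\cos\tfrac{\theta}{2}\bigr)$, yielding an explicit $\delta(\epsilon,\theta)$. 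What your approach buys is an effective, self-contained computation (with constants) in place of a compactness argument whose domain and continuity claims the paper leaves rather terse; what the paper's approach buys is brevity and independence from the particular trigonometric identity. Two small remarks: your invocation of conclusion (9) of Lemma~\ref{Aleksandrov} is legitimate (the hypotheses (1)--(5) are assumed, so the conclusions are available), but you only use it through $t\le 1$, and even the crude bound $t\le 1+s\le 2$ would suffice at the cost of a factor; and the ``sum versus difference'' worry you flag is in fact automatic in this congruent isosceles configuration, since the common $x$-coordinate $g(t)$ of $\widetilde y,\widetilde z$ is forced to be positive, so condition (2) alone pins the angle at $\widetilde x$ as the sum --- it is only in the general Alexandrov setting (as in the paper's proof) that hypothesis (5) is needed to select the correct intersection point. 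Likewise the monotonicity of $g$ plays no role in your estimate, so that aside can be dropped.
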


\begin{proof}
Let $0<\theta<\pi$ be given.  Suppose $\overline x$,
$\overline y$, and $\overline z$ satisfy the hypotheses.
Without loss of generality, choose $\widetilde x=\overline x$ and
$\widetilde y=\overline y$.
Let $C_{\overline z}$ be the subarc of the circle of radius 1 centered at $\overline x$ joining
$\overline z$ to $\overline y$ which has length $<\pi$.
Given $\widetilde z$, the circles centered at $\widetilde z$
and $\widetilde y$ of radius $d(\overline z,\overline w)$ must intersect in one or two points
because of (6) in the previous lemma.
Condition (5) guarantees that $\widetilde w$ is the point closer to $\widetilde x$.
This shows that $d(\widetilde w,\overline w)$ is a continuous function of the pair
$(\widetilde z,\overline z)$ whose domain is a compact set (see Figure~\ref{fig:aal}).
Therefore this function is uniformly continuous and since it attains zero whenever
$\widetilde z=\overline z$, the conclusion follows.
\end{proof}

\begin{figure}

\begin{tikzpicture}
	\begin{pgfonlayer}{nodelayer}
		\node [style=point] (x) at (0, 0) {};
		\node [style=point] (y) at (5.66, 0) {};
		\node [style=point] (wb) at (4.83,2) {};
		\node [style=point] (wt) at (4.09,1.49) {};
		\node [style=point] (zb) at (4, 4) {};
		\node [style=point] (zt) at (4.34, 3.64) {};
		\node (6) at (-.5, -0.25) {$\overline{x} = \widetilde{x}$};
		\node  (7) at (6, -0.25) {$\overline{y}= \widetilde{y}$};
		\node (8) at (3.99,1.19) {$\widetilde{w}$};
		\node (9) at (5.13,2.05) {$\overline{w}$};
		\node(10) at (4.5, 4) {$\widetilde{z}$};
			\node (11) at (4.1, 4.3) {$\overline{z}$};
		\node (12) at (2, 0) {};
		\node [above right] (13) at (1.6, .95) {$\le\theta$};
		\node (14) at (4,2.8)  {};
		\node (15) at (5.01,0.51) {};
		\node (wbarrow) at (4.83,2) {};
		\node (17) at (4.9,2.83) {};
			\node (18) at (3.2, 4.3) {$C$};
	\end{pgfonlayer}

	\begin{pgfonlayer}{edgelayer}
		\draw [ultra thick, red] (x) to (y);
		\draw [ultra thick, blue] (zb) to (wb);
		\draw [ultra thick, blue] (wb) to (y);
		\draw [ultra thick, blue] (zt) to (wt);
		\draw [ultra thick, blue] (wt) to (y);
		\draw[ultra thick, red] (zt) to (x);
		\draw[ultra thick, red] (zb) to (x);
		\draw[] (x) to (wt);
		\draw[] (wb) to (x);
		\draw  (12) arc (0:45:2cm);
		\draw[thick, dashed, teal] (17) arc (30:55:5.66cm);
		\draw[arrows=->, thick, teal] (wt) to (wbarrow);
			\end{pgfonlayer}
\end{tikzpicture}
\caption{The Proof of the Controlled Alexandrov Lemma}
\label{fig:aal}
\end{figure}
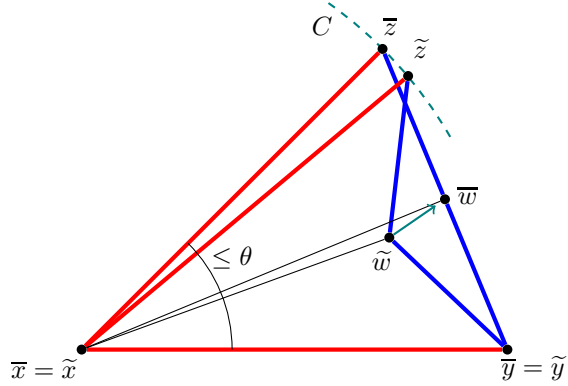

We now return our attention to studying triangles in CAT(0) spaces.
The proof of the next lemma echoes that of the Flat Triangle
Lemma in \cite[Proposition~II.2.9]{Bridson1999}. 

\begin{lemma}[Approximately Flat Triangle Lemma]
\label{approximately flat triangles}
Let $(X,d)$ be a CAT(0) space and let $\theta\in(0,\pi)$ be fixed.
Given $\epsilon>0$, there is a $\delta>0$ such that
for all $t>0$, whenever $x,y,z\in X$ with $d(x,y)=d(x,z)=t$ and

\[
	\theta-\delta\le\angle_x(y,z)\le\overline{\angle_x}(y,z)\le\theta,
\]
then for the midpoint $w$ in the geodesic $[y,z]$,
\[
	d(w,x)\ge d_{\E^2}(\overline w,\overline x)-\epsilon t
\]
where $\triangle(\overline x,\overline y,\overline z)$ is a
comparison triangle in Euclidean space with $\overline w$
the point corresponding to $w$ (i.e. $\overline w$ is the midpoint
of $[\overline y,\overline z]$).
\end{lemma}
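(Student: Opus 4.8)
The plan is to adapt the proof of the Flat Triangle Lemma \cite[Proposition~II.2.9]{Bridson1999}, substituting the Controlled Alexandrov Lemma (Lemma~\ref{approximate alex lemma}) for its use of the plain Alexandrov Lemma. Split $\triangle xyz$ at the midpoint $w$ of $[y,z]$ into the two sub-triangles $\triangle xyw$ and $\triangle xwz$, take Euclidean comparison triangles for them with vertices named $\widetilde x,\widetilde y,\widetilde w$ and $\widetilde x,\widetilde w,\widetilde z$, and glue these along their common edge $[\widetilde x,\widetilde w]$ (which has length $d(x,w)$ in each) so that $\widetilde y$ and $\widetilde z$ lie on opposite sides of the line through $\widetilde x$ and $\widetilde w$. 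Let $\overline\triangle xyz$ be a comparison triangle for the whole triangle, with $\overline w$ the midpoint of $[\overline y,\overline z]$. Since Alexandrov and comparison angles are scale invariant and the desired inequality scales linearly in $t$, I would first rescale the metric to reduce to the case $t=1$, and take $\delta=\delta(\theta,\epsilon)$ to be the constant furnished by Lemma~\ref{approximate alex lemma}, shrinking it if necessary so that $\delta<\theta$.

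Next I would check that these eight points satisfy the hypotheses of the Alexandrov Lemma (Lemma~\ref{Aleksandrov}). Conditions (1), (2) and (4) hold by construction, and (3) is just the triangle inequality, since $d(x,y)+d(x,z)=2\ge d(y,z)=d(y,w)+d(w,z)$. Condition (5) is where the CAT(0) hypothesis enters: since $w$ lies on the geodesic $[y,z]$ we have $\angle_w(x,y)+\angle_w(x,z)\ge\angle_w(y,z)=\pi$, and because comparison angles dominate Alexandrov angles,
\[
	\angle_{\widetilde w}(\widetilde x,\widetilde y)+\angle_{\widetilde w}(\widetilde x,\widetilde z)
	=\overline\angle_w(x,y)+\overline\angle_w(x,z)\ge\pi.
\]
So Lemma~\ref{Aleksandrov} applies, and its conclusion (6) yields the upper bound $\angle_{\widetilde x}(\widetilde y,\widetilde w)+\angle_{\widetilde x}(\widetilde w,\widetilde z)\le\overline\angle_x(y,z)\le\theta$ required by Lemma~\ref{approximate alex lemma}.

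The one remaining hypothesis of Lemma~\ref{approximate alex lemma} is the matching lower bound $\theta-\delta\le\angle_{\widetilde x}(\widetilde y,\widetilde w)+\angle_{\widetilde x}(\widetilde w,\widetilde z)$, which I would obtain by combining the hypothesis $\angle_x(y,z)\ge\theta-\delta$ with the chain
\begin{align*}
	\angle_x(y,z)
	&\le\angle_x(y,w)+\angle_x(w,z) \\
	&\le\overline\angle_x(y,w)+\overline\angle_x(w,z) \\
	&=\angle_{\widetilde x}(\widetilde y,\widetilde w)+\angle_{\widetilde x}(\widetilde w,\widetilde z),
\end{align*}
which uses, in order, the triangle inequality for Alexandrov angles at $x$, the fact that comparison angles dominate Alexandrov angles, and that the glued angles are precisely the comparison angles of the two sub-triangles. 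Feeding both bounds into Lemma~\ref{approximate alex lemma} gives $d_{\E^2}(\overline x,\overline w)-\epsilon\le d_{\E^2}(\widetilde x,\widetilde w)=d(x,w)$; undoing the rescaling gives $d(w,x)\ge d_{\E^2}(\overline w,\overline x)-\epsilon t$, as desired.

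I expect the main obstacle to be purely organizational: keeping straight which angle is an Alexandrov angle and which is a comparison angle, and in which (sub-)triangle each is measured, so that every step of the chain points the right way. A small point to dispose of first is that the hypothesis $\angle_x(y,z)\ge\theta-\delta>0$, together with $t>0$ and the CAT(0) inequality $d(x,w)\le d_{\E^2}(\overline x,\overline w)\le t$, rules out the degenerate configurations (in particular it forces $w\neq x$), so that the glued figure really is of the type appearing in Lemma~\ref{approximate alex lemma}.
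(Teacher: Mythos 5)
Your proposal is correct and follows essentially the same route as the paper: glue comparison triangles for $\triangle(x,y,w)$ and $\triangle(x,z,w)$ along $[\widetilde x,\widetilde w]$, verify the hypotheses of Lemma~\ref{Aleksandrov}, run the angle chain $\theta-\delta\le\angle_x(y,z)\le\angle_{\widetilde x}(\widetilde y,\widetilde w)+\angle_{\widetilde x}(\widetilde w,\widetilde z)\le\overline{\angle}_x(y,z)\le\theta$, and apply the Controlled Alexandrov Lemma after rescaling to $t=1$. You even supply the verification of hypotheses (1)--(5) that the paper leaves to the reader, so no changes are needed.
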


\begin{proof}
Choose $\widetilde x,\widetilde y,\widetilde z,\widetilde w$ in $\E^2$ so that
$\triangle(\widetilde x,\widetilde y,\widetilde w)$ and
$\triangle(\widetilde x,\widetilde z,\widetilde w)$ are comparison triangles for
$\triangle(x,y,w)$ and $\triangle(x,z,w)$ respectively, arranged so that $\widetilde z$
and $\widetilde y$ are on opposite sides of the the line passing through $\widetilde x$ and $\widetilde w$.
The reader may check that the hypothesis of Alexandrov Lemma (Lemma~\ref{Aleksandrov}) are satisfied.
Then
\begin{align*}
	\theta-\delta
&\le
	\angle_{x}(y,z) &(\textrm{by hypothesis}) \\
&\le
	\angle_x(y,w)+\angle_x(w,z) &(\textrm{by triangle inequality})\\
&\le
	\overline{\angle}_x(y,w)+\overline{\angle}_x(w,z) &(\textrm{by definition of comparison angle})\\
&=
	\angle_{\widetilde x}(\widetilde y,\widetilde w)
		+
	\angle_{\widetilde x}(\widetilde w,\widetilde z)=\angle_{\widetilde x}(\widetilde y,\widetilde z) &(\textrm{by euclidean geometry}) \\
&
	\le\overline{\angle}_{x}(y,z) &(\textrm{by (6) of Lemma~\ref{Aleksandrov}})\\
&
	\le\theta &(\textrm{by hypothesis}.)
\end{align*}
This verifies the last hypothesis of Lemma~\ref{approximate alex lemma}.
Therefore the claim follows after rescaling.
\end{proof}

\begin{corollary}[Approximately Flat Sectors]
\label{approximately flat sectors}
Let $\zeta,\nu\in\partial X$ such that $\theta=d_{Tits}(\zeta,\nu)<\pi$ and $\epsilon>0$
be given.  Then $p\in X$ may be chosen so that the following statement holds:
if $\alpha$ and $\beta$ are the unit speed geodesics emanating from $p$ going out to $\zeta$ and
$\nu$ respectively and $z_t$ is the midpoint of the geodesic
$[\alpha(t),\beta(t)]$, then $d(p,z_t) \ge t\cos(\theta/2)-\epsilon t$.
\end{corollary}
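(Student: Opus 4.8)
The plan is to reduce the statement to the Approximately Flat Triangle Lemma (Lemma~\ref{approximately flat triangles}) by choosing $p$ so that the two rays emanating from it open as widely as the Tits angle permits. We may assume $\theta>0$, since for $\theta=0$ we have $\zeta=\nu$, $\alpha=\beta$, $z_t=\alpha(t)$, and the inequality is immediate. The Euclidean input is elementary: in an isosceles triangle with two sides of length $t$ meeting at an angle $\phi$, the distance from the apex to the midpoint of the opposite side is $t\cos(\phi/2)$; since $\cos$ is decreasing on $[0,\pi/2]$ and the relevant angle will always be at most $\theta<\pi$, this distance is at least $t\cos(\theta/2)$.

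Concretely, I would first apply Lemma~\ref{approximately flat triangles} with the given $\theta$ and $\epsilon$ to obtain a $\delta>0$. Because $\theta=d_{Tits}(\zeta,\nu)<\pi$, we have $\theta=\angle_{Tits}(\zeta,\nu)=\sup_{x\in X}\angle_x(\zeta,\nu)$, so I can pick $p\in X$ with $\angle_p(\zeta,\nu)>\theta-\delta$; this is the desired point.

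Next I would verify, for every $t>0$ at once, that $x=p$, $y=\alpha(t)$, $z=\beta(t)$ satisfy the hypotheses of Lemma~\ref{approximately flat triangles}. Here $d(p,y)=d(p,z)=t$, and since $[p,y]$ and $[p,z]$ are initial segments of the rays $\alpha,\beta$, the Alexandrov angle $\angle_p(y,z)$ equals $\angle_p(\zeta,\nu)\in(\theta-\delta,\theta]$. The comparison angle $\overline{\angle}_p(\alpha(t),\beta(t))$ is nondecreasing in $t$ by (\ref{angle function}), with limit $\angle_{Tits}(\zeta,\nu)=\theta$ by (\ref{titsangles}), so $\overline{\angle}_p(y,z)\le\theta$; and $\angle_p(y,z)\le\overline{\angle}_p(y,z)$ by the CAT(0) inequality. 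Thus $\theta-\delta\le\angle_p(y,z)\le\overline{\angle}_p(y,z)\le\theta$, and the lemma yields $d(p,z_t)\ge d_{\E^2}(\overline{z_t},\overline p)-\epsilon t$, where $\overline{z_t}$ is the midpoint of the base of the comparison triangle $\triangle(\overline p,\overline y,\overline z)$. That triangle is isosceles with equal sides of length $t$ and apex angle $\overline{\angle}_p(y,z)\le\theta$, so $d_{\E^2}(\overline{z_t},\overline p)\ge t\cos(\theta/2)$, and the claimed bound $d(p,z_t)\ge t\cos(\theta/2)-\epsilon t$ follows.

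The only step that is more than a formality is ensuring that a single choice of $p$ serves all $t$ simultaneously. This works because the Alexandrov angle at $p$ depends only on the germs of the rays, so the lower bound $\angle_p(\alpha(t),\beta(t))>\theta-\delta$ is independent of $t$, while the monotonicity of comparison angles together with their identification with the Tits angle in the limit keeps the upper bound $\overline{\angle}_p(\alpha(t),\beta(t))\le\theta$ in force for all $t$. I expect nothing else to present any difficulty.
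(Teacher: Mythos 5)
Your proposal is correct and follows essentially the same route as the paper: pick $p$ with $\angle_p(\zeta,\nu)>\theta-\delta$ for the $\delta$ from Lemma~\ref{approximately flat triangles}, use monotonicity of comparison angles together with their limit being the Tits angle to trap $\overline{\angle}_p(\alpha(t),\beta(t))$ in $[\theta-\delta,\theta]$ for all $t$, and conclude via the Euclidean isosceles midpoint computation. Your write-up is in fact slightly more explicit than the paper's in checking the lower bound on the Alexandrov angle and the independence of the choice of $p$ from $t$, but there is no substantive difference.
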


\begin{proof}
By definition of $\angle_{Tits}(\zeta,\nu)$, $p\in X$ may be chosen so that $\theta-\angle_p(\zeta,\nu)$
is smaller than the $\delta=\delta(\theta,\epsilon)$ provided in Lemma~\ref{approximately flat triangles}.
For any fixed $t$, the comparison triangle $\overline{\triangle}(p,\alpha(t),\beta(t))$ in Euclidean
space is an isosceles triangle with two sides of length $t$ and apex angle of measure
$\theta_t = \overline{\angle_p}(\alpha(t),\beta(t))$.  Since $\theta_t$ is nondecreasing in $t$
(\ref{angle function}), $\theta_t$ lies between $\theta-\delta$ and $\theta$.
Therefore, by Lemma~\ref{approximately flat triangles},
\[
	d(p,z_t) \ge t\cos(\theta_t/2)-\epsilon t \ge t\cos(\theta/2)-\epsilon t.
\]
\end{proof}

\section{Affine Maps}
\RalfSubsection{Properties of Affine Maps Between CAT(0) Spaces}

Let $f\colon X\to Y$ be a continuous affine map between proper CAT(0) spaces.
We first establish Lemma~\ref{prop:surjective} below, which allows us to assume that $f$ is surjective.
To this end, recall that CAT(0) metrics are convex, meaning that the distance between
a pair of points on geodesics is bounded above by a convex combination of the distances
between their endpoints.  This implies that geodesic segments are uniquely determined
by their endpoints.  Moreover, after reparameterizing as constant speed maps over $[0,1]$,
they depend continuously on their endpoints (in the uniform topology on maps).

\begin{lemma}
\label{prop:surjective}
Let $f\colon X\to Y$ be a continuous affine map between proper CAT(0) spaces.
The image $Y'$ of $f$ is a closed, convex subspace of $Y$.
\end{lemma}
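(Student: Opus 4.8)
The plan is to show separately that $Y'=f(X)$ is convex and that it is closed, since together these give that $Y'$ is a complete CAT(0) space in its own right (a closed convex subset of a proper CAT(0) space).

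\textbf{Convexity.} First I would take two points $f(a),f(b)\in Y'$ with $a,b\in X$. Let $\gamma\colon[0,1]\to X$ be the constant-speed geodesic in $X$ from $a$ to $b$. Since $f$ is affine, $f\circ\gamma$ is a geodesic in $Y$ (reparametrized to constant speed, it is the constant-speed geodesic from $f(a)$ to $f(b)$, using that geodesics between two points in a CAT(0) space are unique). Hence the entire geodesic $[f(a),f(b)]$ lies in the image of $f$. This is immediate from the definition of affine map and uniqueness of geodesics in CAT(0) spaces, so convexity is the easy half.

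\textbf{Closedness.} This is the step I expect to be the main obstacle, since the image of a continuous map from a non-compact space need not be closed in general; we must use affineness and properness crucially. Let $(y_n)\subset Y'$ with $y_n\to y\in Y$; write $y_n=f(x_n)$. Fix a basepoint $x_0\in X$ and set $y_0=f(x_0)$. I would consider the constant-speed geodesics $\sigma_n\colon[0,1]\to Y$ from $y_0$ to $y_n$; since $f$ is affine, each $\sigma_n$ is $f$ applied (after reparametrization) to the geodesic $\gamma_n\colon[0,1]\to X$ from $x_0$ to $x_n$, i.e.\ $\sigma_n(t)=f(\gamma_n(s_n(t)))$ where $s_n$ is the linear reparametrization. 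The key point is to control $d(x_0,x_n)$: if this stays bounded along a subsequence, then by properness of $X$ the $x_n$ (sub)converge to some $x\in X$, and continuity of $f$ gives $f(x)=y$, so $y\in Y'$. If $d(x_0,x_n)\to\infty$ (along every subsequence), I would instead look at the points $x_n'=\gamma_n(c/d(x_0,x_n))$ at fixed distance $c>0$ from $x_0$ along $\gamma_n$; these lie in the compact sphere of radius $c$, so a subsequence converges to some $x'\in X$ with $d(x_0,x')=c$. By continuity, $f(x_n')\to f(x')$. On the other hand $f(x_n')$ lies on the geodesic $\sigma_n$ from $y_0$ to $y_n$, at parameter proportional to $\rho(\gamma_n)\cdot c$ relative to the full length $\rho(\gamma_n)\cdot d(x_0,x_n)$ — that is, at the point on $[y_0,y_n]$ at distance $\frac{c}{d(x_0,x_n)}\,d(y_0,y_n)$ from $y_0$. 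Since $d(y_0,y_n)\to d(y_0,y)<\infty$ and $d(x_0,x_n)\to\infty$, this distance tends to $0$, so $f(x_n')\to y_0$, forcing $f(x')=y_0=f(x_0)$.

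\textbf{Finishing.} To rule out the divergent case I would iterate or bootstrap this: the above shows that along the ray-like geodesics $\gamma_n$, the map $f$ collapses an initial segment of growing length to the single point $y_0$. More precisely, repeating the argument with $c$ replaced by any fixed $c'>0$ shows $f(\gamma_n(c'/d(x_0,x_n)))\to y_0$ for every $c'$, and by a diagonal argument one extracts a geodesic ray $\gamma\colon[0,\infty)\to X$ emanating from $x_0$ (a subsequential limit of the $\gamma_n$, reparametrized by arclength) along which $f$ is constant equal to $y_0$; but an affine map is constant on a geodesic only if its rescaling constant there is $0$, and then $f(x_n)=f(\gamma_n(1))$ would have to satisfy $d(y_0,f(x_n)) = \rho(\gamma_n)\,d(x_0,x_n)$, and one shows $\rho(\gamma_n)\to 0$ fast enough that this stays bounded — in fact, since $f(x_n')\to y_0$ for the point $x_n'$ at intrinsic distance $c$, affineness along $\gamma_n$ gives $d(y_0,f(x_n)) = \frac{d(x_0,x_n)}{c}\,d(y_0,f(x_n'))$, and as $d(y_0,f(x_n'))\to 0$ while we need the left side $\to d(y_0,y)$, we get $d(x_0,x_n)\cdot d(y_0,f(x_n'))$ converges; this is consistent only if effectively the sequence behaves like the bounded case after reparametrization, and one lands on a genuine preimage $x$ with $f(x)=y$. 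Hence in all cases $y\in Y'$, so $Y'$ is closed. Being closed and convex in a complete CAT(0) space, $Y'$ is itself complete CAT(0).
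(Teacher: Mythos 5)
Your convexity argument and the bounded case of closedness are correct and coincide with the paper's proof; the problem is the divergent case, i.e.\ your ``Finishing'' paragraph. What you have actually established there is only this: a subsequential limit ray $\gamma$ of the segments $[x_0,x_n]$ is collapsed by $f$ to $y_0$, the rescaling constants satisfy $\rho([x_0,x_n])\to 0$, and $\rho([x_0,x_n])\,d(x_0,x_n)=d(y_0,y_n)\to d(y_0,y)$. There is nothing contradictory in these facts, and they produce no preimage of $y$; the sentence ``this is consistent only if effectively the sequence behaves like the bounded case after reparametrization, and one lands on a genuine preimage'' is not an argument. Concretely, take $X=\E^2$, $Y=\R$, $f$ the first-coordinate projection, $x_0$ the origin and $x_n=(a,n)$ with $a\neq 0$: then $d(x_0,x_n)\to\infty$, the limit ray (the vertical axis) is collapsed to $y_0=0$, and every limit of your auxiliary points $x_n'$ maps to $y_0$, not to $y=a$. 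The preimage of $y$ exists, of course, but it sits on an entirely different fiber and is not reached by any limiting procedure along the $\gamma_n$; so in the collapsed-ray case your construction simply stops short of the conclusion $y\in Y'$.

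A sign that something essential is missing is that your argument never uses geodesic completeness (a standing hypothesis in the paper), while closedness genuinely fails without it: the set $X=\{(u,v)\in\E^2:\ u>0,\ v\ge 1/u\}$ is a proper CAT(0) space (closed convex subset of $\E^2$) and the affine map $f(u,v)=u$ has image $(0,\infty)$, which is not closed. So any complete proof must invoke extra structure exactly in the case where some ray is collapsed. The paper handles the divergent case by noting $[y_0,y_n]=f([x_0,x_n])$, observing that the limit $f(\alpha)$ is either a ray or a point, and ruling out the ray case because $(y_n)$ is bounded; the remaining identification of the limit of $[y_0,y_n]$ with the point $f(\alpha)$ (yielding $y=y_0$) is precisely the delicate step your two examples probe. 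A robust way to finish is via the splitting of Lemma~\ref{zerofibers}: when some boundary direction has $\rho=0$, geodesic completeness lets one split $X=X_0\times X_1$ with $f$ constant on the $X_0$-slices, so that the image of $f$ is the image of an affine map on $X_1$ collapsing no ray, and then only your bounded case can occur. Your write-up is missing this (or some equivalent) mechanism, so the closedness claim is not yet proved.
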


\begin{proof}
Any two points $p' \neq q' \in Y'$ are images of points $p \neq q \in X$. Hence $p'$ and $q'$ belong to $f([p,q])$. Since geodesics are unique, it follows that $Y'$ is convex. 

To see that $Y'$ is closed,
choose any sequence $(y_n)\subset Y'$ converging to a point $y\in Y$.
Choose preimages $(x_n)\subset X$.  By passing to a subsequence, we
may assume that $x_n\to x\in X\cup\partial X$.  If $x\in X$, then
$y=f(x)\in Y'$ by continuity and we are done.  Otherwise the sequence
of geodesics $[x_0,x_n]$ converges to a ray $\alpha$.  By
continuity of geodesics in their endpoints, $[y_0,y_n]\to f(\alpha),$ which is either a ray
or a point.  If it is a ray, then $\{y_n\}$ is unbounded, giving
us a contradiction.  So $f(\alpha)$ is a point and $y=y_0\in Y'$.
\end{proof}

\begin{rem}
The continuous affine image of a CAT(0) space need not be CAT(0).
For instance, the identity map $\E^2\to(\R^2,l_1)$ is a continuous affine map.
Similar examples can be obtained by replacing $l_1$ with a norm determined
by a suitable centrally symmetric convex body.  Indeed, this idea gives rise to the
Finsler norms explored in \cite{Lytchak2012}.
\end{rem}

Recall that for a geodesic $\alpha$ in $X$, $\rho(\alpha)$ denotes the constant by which $\alpha$
is rescaled by $f$.

\begin{lemma} 
\label{extendrho}
Let $f\colon X\to Y$ be a continuous affine map between proper CAT(0) spaces
with rescaling function $\rho$.
Then:
\begin{enumerate}
\item $\rho$ determines a function $\partial X\to[0,\infty)$, which we still call $\rho$.
	In other words, $\rho(\alpha)=\rho(\beta)$ if $\alpha$ and $\beta$ are asymptotic geodesic rays in $X$.
\item If $\rho(\alpha)>0$ and $\beta$ is a geodesic ray asymptotic to $\alpha$, then $f(\alpha)$ and $f(\beta)$ are also asymptotic geodesic rays.
\item $\rho$ is a continuous function on $\partial X$ in the cone topology.
\item If $f$ is surjective and there is no $\zeta\in\partial X$ such that $\rho(\zeta)=0$, then
$f$ extends to a homeomorphism $X\cup\partial X\to Y\cup\partial Y$.
\end{enumerate}
\end{lemma}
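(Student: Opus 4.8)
The four parts build on one another, so I would establish them in order, reusing Corollary~\ref{approximately flat sectors} (Approximately Flat Sectors) as the central geometric input.

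Part (1): Let $\alpha,\beta$ be asymptotic rays; I want $\rho(\alpha)=\rho(\beta)$. After translating I may assume they emanate from points $p,q$ at bounded distance, and since $\alpha,\beta$ are asymptotic they bound a flat strip (or at least their endpoints coincide in $\partial X$). The cleanest route: interpolate by a one-parameter family of rays $\gamma_s$ all asymptotic to the common boundary point, obtained by sliding along a transversal geodesic; on a flat strip between $\alpha$ and $\beta$ one can connect $\alpha(t)$ to $\beta(t)$ by geodesics whose rescaling constants vary, but convexity of the metric on $Y$ together with the affine property forces $f$ restricted to the strip to be affine on the strip, hence the two boundary geodesics are rescaled equally. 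Alternatively — and this is what I expect the authors do — use that $d(\alpha(t),\beta(t))$ stays bounded while $d(f\alpha(t),f\beta(t))$ would have to grow like $|\rho(\alpha)-\rho(\beta)|\,t$ if the constants differed, unless $f\alpha$ and $f\beta$ stay a bounded distance apart; quantify this via the triangle $\triangle(p,\alpha(t),\beta(t))$, whose image is a triangle in $Y$ with two sides of length $\rho(\alpha)t$ and $\rho(\beta)t$ and bounded third side, which is impossible in a CAT(0) (indeed any) metric space unless $\rho(\alpha)=\rho(\beta)$.

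Part (2): Now assuming $\rho(\alpha)=\rho(\beta)=:\rho>0$, I must upgrade boundedness of $d(f\alpha(t),f\beta(t))$ to genuine asymptoticity, i.e. that $f\alpha$ and $f\beta$ are honest geodesic rays (not just quasi-rays) lying in bounded neighborhoods of each other. That $f\alpha$ is a ray is immediate from affineness since $\rho>0$; that the two rays are asymptotic follows once we know $d(f\alpha(t),f\beta(t))$ is bounded, which Part (1)'s argument already gives (the image triangle has bounded third side).

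Part (3): For continuity of $\rho$ on $\partial X$, take $\zeta_n\to\zeta$ in the cone topology, fix a basepoint $x_0$, and let $\alpha_n,\alpha$ be the rays from $x_0$. Then $\alpha_n\to\alpha$ uniformly on compacta, so by continuity of $f$, $f\alpha_n(t)\to f\alpha(t)$ for each fixed $t$; hence $d(f x_0, f\alpha_n(t)) = \rho(\zeta_n) t \to \rho(\zeta) t$, giving $\rho(\zeta_n)\to\rho(\zeta)$. This is the easy part and needs only continuity of $f$ plus the definition of the cone topology.

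Part (4): This is the main obstacle. Assume $f$ surjective and $\rho>0$ everywhere on $\partial X$. I would define the boundary extension $\partial f\colon\partial X\to\partial Y$ by sending the class of a ray $\alpha$ to the class of $f\alpha$ (well-defined by (1)–(2)), and check: (a) $f\cup\partial f$ is continuous at boundary points — if $x_n\to\zeta$ in $\overline X$ with $x_n\in X$, the geodesics $[x_0,x_n]$ converge to $\alpha$, so $[fx_0, fx_n]$ converge to $f\alpha$ by continuity of geodesics in endpoints (noting $f x_n$ escapes to infinity since $\rho$ is bounded below by compactness of $\partial X$ — here I need $\inf_{\partial X}\rho>0$, which follows from (3) and compactness), hence $fx_n\to\partial f(\zeta)$; (b) $\partial f$ is injective — if $f\alpha, f\beta$ are asymptotic but $\alpha,\beta$ are not, then $d(\alpha(t),\beta(t))\to\infty$, but $\overline{\angle}_{x_0}(\alpha(t),\beta(t))$ then approaches $\angle_{Tits}(\zeta,\eta)>0$, and Corollary~\ref{approximately flat sectors} lets us find a good basepoint $p$ where the image configuration is nearly a flat sector of positive angle, forcing $d(f\alpha(t),f\beta(t))$ to grow linearly, a contradiction; (c) $\partial f$ is surjective — since $f$ is surjective and proper-ish (images of bounded sets: here is the subtlety, $f$ need not be proper as a map, but a sequence $y_n\to\eta\in\partial Y$ has preimages $x_n$, and $x_n$ cannot stay bounded since $f$ is continuous and $f(\text{bounded})$ is bounded because $f$ is $\rho$-Lipschitz-ish — actually affine maps are Lipschitz with constant $\sup\rho$, which is finite by compactness of $\overline X$ and continuity of $\rho$ extended via a similar argument on finite segments — so $x_n\to\zeta$ along a subsequence and $\partial f(\zeta)=\eta$); (d) $\partial f$ is a homeomorphism — it is a continuous bijection between compact Hausdorff spaces. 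The one genuinely delicate point throughout is controlling the behavior of $f$ on unbounded sets: one must first observe that $\rho$ is bounded above and below on $\partial X$ (compactness + continuity from (3)), and then that this forces $f$ to be a rough-isometry-type map pushing $\partial X$ onto $\partial Y$ homeomorphically. I would isolate "$f$ is $(\sup_{\partial X}\rho)$-Lipschitz" as a preliminary observation, proved by noting any geodesic segment is rescaled by some $\rho(\gamma)$ which equals $\rho$ of its boundary-extension, hence is $\le \sup_{\partial X}\rho <\infty$.
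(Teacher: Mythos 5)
Your parts (3) and (4) are fine in outline (indeed (4) is more detailed than the paper, which simply notes that continuity of $\rho$ on the compact set $\partial X$ gives upper and lower bounds, so that $f$ is bi-Lipschitz and $f^{-1}$ continuous), but the heart of the lemma is (1)--(2), and there your argument has a genuine gap. Both of your proposed routes fail or are unjustified. The flat-strip route is not available: asymptotic \emph{rays} in a CAT(0) space need not bound a flat strip (the Flat Strip Theorem concerns parallel complete geodesics; in $\mathbb{H}^2$ asymptotic rays converge), so there is no strip on which to analyze $f$. Your alternative triangle argument needs the third side $d\bigl(f\alpha(t),f\beta(t)\bigr)$ to stay bounded, and that is exactly what is unknown at this stage: one only knows $d(\alpha(t),\beta(t))$ is bounded, and the image of the rung $[\alpha(t),\beta(t)]$ has length $\rho\bigl([\alpha(t),\beta(t)]\bigr)\,d(\alpha(t),\beta(t))$, where the rescaling constants of these segments, which escape every compact set, have no a priori uniform bound --- continuity of $f$ only controls $\rho$ on segments meeting a fixed compact set. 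The Lipschitz bound you propose to ``isolate as a preliminary observation'' is obtained by extending a segment to a ray and invoking $\sup_{\partial X}\rho<\infty$, which presupposes precisely the well-definedness (1) and continuity (3) of $\rho$ on $\partial X$; invoking it inside the proof of (1) is circular. The same gap propagates to your part (2), which cites the boundedness of the third side as if (1) had produced it.

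The paper avoids the far-away rungs altogether: it considers the geodesics $\gamma_n=[\beta(0),\alpha(n)]$, whose rescaling constants are ratios of distances from the two fixed basepoints, so that when $\rho(\alpha)>0$ the triangle inequality gives $\rho(\gamma_n)=d(f\beta(0),f\alpha(n))/d(\beta(0),\alpha(n))\to\rho(\alpha)$. Convexity of the metric gives $\gamma_n\to\beta$ uniformly on compacta, hence $f(\gamma_n)\to f(\beta)$ by continuity of $f$, while on the other hand $f(\gamma_n)=[f\beta(0),f\alpha(n)]$ converges to the unique ray from $f\beta(0)$ asymptotic to $f(\alpha)$; comparing the two limits yields (1) and (2) simultaneously, and the case $\rho(\alpha)=0$ is handled separately (all $\gamma_n$ then have the same bounded image, so $\rho(\gamma_n)\to 0$ and $\rho(\beta)=0$). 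To repair your write-up you would either have to adopt this basepoint device or supply an independent proof that the rescaling function is uniformly bounded on all geodesic segments, which you have not given.
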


\begin{proof}
Assume all geodesics in $X$ are parameterized to have unit speed.
Suppose $\alpha$ and $\beta$ are a pair of asymptotic geodesics in $X$.
We will break the proof up into two cases.

{\bf Case 1:} Assume $\rho(\alpha)>0$.  Then the image
of $\alpha$ is also a geodesic ray (with new speed).
Let $\gamma_n$ be the geodesic joining
$\beta(0)$ to $\alpha(n)$.
Then, by the triangle inequality,
\[
	\lim_{n\to\infty}
		\frac{d\bigl(\beta(0),\alpha(n)\bigr)}{d\bigl(\alpha(0),\alpha(n)\bigr)}
			=
	\lim_{n\to\infty}
		\frac{d\bigl(f\beta(0),f\alpha(n)\bigr)}{d\bigl(f\alpha(0),f\alpha(n)\bigr)}
			=
	1.
\]
This implies that $\rho(\gamma_n)\to\rho(\alpha)$.

By convexity of the metric, $\gamma_n\to\beta$ uniformly on
compact sets.  In $Y$, $f(\gamma_n)$ is the geodesic joining
$f(\beta(0))$ to $f(\alpha(n))$ and
$f(\gamma_n)$ converges to the unique geodesic
ray $\widehat{\beta}$ emanating from $f(\beta(0))$ which is
asymptotic to $f(\alpha)$.
On the other hand, $f(\gamma_n)$ converges to $f(\beta)$.
This implies that $f(\beta)=\widehat{\beta}$
and hence $\rho(\alpha)=\rho(\beta)$.  This establishes (1) and (2).

To get (3), observe that whenever $\gamma_n$ is a
sequence of geodesics (either segments or rays) converging to a geodesic
ray $\beta$, then $f(\gamma_n)$ converges to $f(\beta)$ and $\rho(\gamma_n)$
converges to $\rho(\beta)$.  For (4), we observe that since $\rho$ is continuous
on the compact set $\partial X$, it attains upper and lower bounds.  Therefore
$f$ is bi-Lipschitz and $f^{-1}$ is continuous.

{\bf Case 2:} Assume $\rho(\alpha)=0$.
Then the image of $\alpha$ is a single point, and all $\gamma_n$
have the same image -- a finite geodesic segment $\widehat{\gamma}$
emanating from $f\beta(0)$.  Since the lengths of the $\gamma_n$ go to
infinity, $\rho(\gamma_n)$ converges to zero.
Again, $\gamma_n$ converges to $\beta$, and so by continuity of $f$,
$\rho(\beta)=0$.
\end{proof}

Next we reduce to the case where $f$ is injective.

\begin{lemma}
\label{zerofibers}
Let $f\colon X\to Y$ be a continuous affine map between geodesically complete proper CAT(0) spaces.
If $\rho$ is 0 anywhere on $\partial X$, then $X$ splits as a product  $X= X_0 \times X_1$ such that
$\partial (X_0) = \rho ^{-1} (0)$.  More precisely,

\begin{enumerate}
\item for every $x\in X_0$, $f$ is injective on the subspace $\{x\}\times X_1$ and \\
\item for every $y\in X_1$, $f(X_0\times\{y\})$ is a single point. \\
\end{enumerate}
\end{lemma}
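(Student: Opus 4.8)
The goal is to produce a splitting $X = X_0 \times X_1$ that exactly separates the directions where $\rho$ vanishes from the rest. I would start by letting $P = \rho^{-1}(0) \subset \partial X$ and observe that by Lemma~\ref{extendrho}(3) this set is closed in the cone topology, and it is proper because $f$ is (eventually, after Lemma~\ref{prop:surjective}) nontrivial — if $\rho \equiv 0$ then $f$ is constant, a degenerate case I would dispose of separately. The crux is to verify that $P$ is $\pi$-convex and involutive, so that Theorem~\ref{spatziers lemma} (or rather Theorem~\ref{spatziers}, applied to a factor) hands us a spherical join decomposition $\partial X = P \ast P^\perp$, which via \cite[Theorem~II.9.24]{Bridson1999} promotes to a metric product $X = X_0 \times X_1$ with $\partial X_0 = P$.

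\textbf{Showing $P$ is $\pi$-convex.} Suppose $\zeta, \eta \in P$ with $d_{Tits}(\zeta,\eta) = \theta < \pi$, and let $\xi$ be a point on the Tits geodesic between them. I would pick a basepoint $p \in X$ adapted to $\zeta,\eta$ as in Corollary~\ref{approximately flat sectors}: for any $\epsilon > 0$ the midpoints $z_t$ of the segments $[\alpha(t),\beta(t)]$ (where $\alpha, \beta$ are the rays from $p$ to $\zeta,\eta$) satisfy $d(p,z_t) \ge t\cos(\theta/2) - \epsilon t$, so the rays $\alpha,\beta$ span an almost-flat sector. Since $\rho(\zeta) = \rho(\eta) = 0$, the images $f(\alpha(t))$ and $f(\beta(t))$ stay within bounded distance of $f(p)$; but $d(f(p), f(z_t)) $ is controlled by $f$ along the geodesic from $p$ to $z_t$, whose far endpoint $z_t$ marches off toward $\xi$. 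Running this as $t \to \infty$ (and $\epsilon \to 0$), convexity of the CAT(0) metric in $Y$ forces $d(f(p), f(z_t))$ to be bounded, hence the ray from $p$ toward $\xi$ has rescaling constant $0$, i.e. $\xi \in P$. I expect this is the main obstacle: making the "$z_t$ goes to $\xi$" claim precise and correctly quantified, and handling that $z_t$ is only the midpoint of a comparison-isosceles configuration, not literally on a geodesic to $\xi$ — one likely needs to iterate the midpoint construction (bisect repeatedly) to reach a dense set of interior points of the Tits segment, then take closure using that $P$ is cone-closed.

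\textbf{Showing $P$ is involutive, and concluding.} If $\zeta \in P$ and $\eta$ is an antipode of $\zeta$ (so $d_{Tits}(\zeta,\eta) \ge \pi$), I claim $\eta \in P$. Here I would use that $X$ is geodesically complete: a ray to $\zeta$ and a ray to $\eta$ from a common point, when $\angle = \pi$, fit together into a geodesic line, and $f$ maps this line to a geodesic (or a point) in $Y$. If $\rho(\eta) > 0$ then $f$ of the $\eta$-ray is a genuine ray, forcing $f$ of the line to be a line, hence $\rho(\zeta) = \rho(\eta) > 0$, contradiction; so $\rho(\eta) = 0$. Once $P$ is closed, $\pi$-convex, involutive, and proper, Theorem~\ref{spatziers} gives $\partial X = P \ast P^\perp$; then by \cite[Theorem~II.9.24]{Bridson1999} $X$ splits as $X_0 \times X_1$ with $\partial_{Tits} X_0 = P$ and $\partial_{Tits} X_1 = P^\perp$. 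Finally I would check the two itemized consequences: for (2), a geodesic ray in $\{x_0'\} \times X_1$ has endpoint in $P^\perp$... wait — rather, for (2) I want $f(X_0 \times \{y\})$ to be a point, which follows because every geodesic in $X_0 \times \{y\}$ is (parallel to) a geodesic with both endpoints in $\partial X_0 = P$, hence rescaled by $0$, so $f$ is constant on the connected set $X_0 \times \{y\}$; and for (1), $f$ restricted to $\{x\} \times X_1$ sends each geodesic to a geodesic of positive speed (endpoints avoid $P$, by... actually endpoints lie in $P^\perp$ and $\rho > 0$ there, since $P = \rho^{-1}(0)$ exactly), so distinct points have distinct images — injectivity. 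The one subtlety is confirming $\rho^{-1}(0) = P$ \emph{exactly}, i.e. that $\rho > 0$ on all of $P^\perp$ and on mixed directions; this should come out of the join structure together with part (1) of Lemma~\ref{extendrho} applied within the product.
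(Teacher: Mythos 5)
Your route (show $P=\rho^{-1}(0)$ is closed, $\pi$-convex and involutive, apply Theorem~\ref{spatziers}, then \cite[Theorem~II.9.24]{Bridson1999}) has two genuine problems. First, Theorem~\ref{spatziers} requires that $\partial X$ have a dense family of round spheres, which in this paper is only available through Corollary~\ref{density of round spheres}, i.e.\ through a geometric group action on $X$. Lemma~\ref{zerofibers} is stated for an arbitrary continuous affine map between geodesically complete proper CAT(0) spaces, with no cocompactness hypothesis, so your argument proves a strictly weaker statement than the one asserted. Second, your involutivity step is exactly the trap the paper warns about just before the Small Angles Lemma: when $d_{Tits}(\zeta,\eta)=\pi$ the Tits angle is a supremum that need not be attained, and there are CAT(0) complexes in which such a pair is joined by no geodesic line (\cite[Example~II.9.23(2)]{Bridson1999}); so you cannot say the two rays ``fit together into a geodesic line.'' You also cannot simply quote Lemma~\ref{lemma:antipodes}, since that lemma (and the first half of the Small Angles Lemma it rests on) is established only after $\rho$ is assumed bounded away from zero, i.e.\ after the present lemma has already been used; a correct fix would rerun the argument of Lemma~\ref{angles go to zero} together with the second estimate $|A-B|\le M\,D(\angle)$, which only needs an upper bound on $\rho$, but that is not what you wrote. (Your $\pi$-convexity step, by contrast, is fine and in fact simpler than you make it: since $\rho(\zeta)=\rho(\eta)=0$ the segments $[\alpha(t),\beta(t)]$ are collapsed to the single point $f(p)$, so $\rho([p,z_t])=0$ outright, and $z_t\to\nu$ plus continuity of $\rho$ along converging geodesics does the rest; no almost-flat sector estimate is needed.)

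For comparison, the paper's proof never touches the boundary splitting machinery: it works directly with the fibers $Z_x=f^{-1}(x)$, shows each is totally geodesic (any extension of a collapsed segment is collapsed, by geodesic completeness and Lemma~\ref{extendrho}), shows any two fibers are equidistant via a convexity argument on the distance function $\phi$, and then splits $X$ with the Sandwich Lemma as in \cite[Proposition~II.6.23]{Bridson1999}. That argument needs no group action and no round spheres, which is why the lemma can be stated in the generality it has. If you want to salvage your approach, you must either add the geometric-action hypothesis (acceptable for the application to Theorem~\ref{main}, but not for the lemma as stated) and repair the involutivity argument as indicated, or switch to the fiberwise argument.
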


\begin{proof} 
Our proof resembles the proof of \cite[Proposition~II.6.23]{Bridson1999}.
Fix $x$ and $y$ in $Y$, and let $Z_x$ and $Z_y$ be their preimages under $f$.
Observe first of all that each $Z_x$ is totally geodesic.  For, given $p,q\in Z_x$,
$\rho([p,q])=0$.  Thus every ray $\alpha$ which extends $[p,q]$ stays inside $Z_x$.

Now define $\phi\colon Z_x\to[0,\infty)$ by letting $\phi(p)$ be the distance from $p\in Z_x$
to the closest point on $Z_y$.  This function is convex.  We will show that $\phi$ is constant
by proving that it is bounded.  Suppose $\phi(p)>\phi(q)$.  Extend $[q,p]$ to get a geodesic
ray $\alpha$.  Since $Z_x$ is totally geodesic, $\alpha$ is contained in $Z_x$.
The fact that $\phi$ is convex and increasing on $\alpha$ implies that $\phi$ is unbounded
here.

Now, let $q'\in Z_y$ be the point
closest to $q$ and $\alpha'$ be the geodesic ray emanating from $q'$ asymptotic to $\alpha$.
By the previous proposition, $\rho(\alpha')=\rho(\alpha)=0$ and therefore $f(\alpha')=f(q')$.
This shows that $\alpha'\subset Z_y$.  But this means that $\phi$ is bounded on $\alpha$,
giving us a contradiction.

Hence, $Z_x$ and $Z_y$ are equidistant totally geodesic subspaces.  As in the proof of
of \cite[Proposition~II.6.23]{Bridson1999}, we may use the Sandwich Lemma
\cite[Exercise~II.2.12(2)]{Bridson1999} to get an isometry $X\to X_0\times X_1$
where $X_0$ is the set $\{Z_x\}_{x\in X}$ and $X_1=Z_{x_1}$ for some $x_1\in X$.
\end{proof}

From this point forward, we will assume that $\rho$ is bounded away from zero, which is equivalent to the assumption that $f$ is injective. 

We will need to know that $\rho$ is constant on antipodes.
If an antipodal pair $\zeta,\eta\in\partial X$ is joined by a geodesic line in $X$,
then this statement is obvious.  This is guaranteed when $d_{Tits}(\zeta,\eta)>\pi$,
for instance, by \cite[Proposition~II.9.21(1)]{Bridson1999}.
However, one can construct CAT(0) 2-complexes in which there is a pair of points $\zeta,\eta$ in
the boundary where $d_{Tits}(\zeta,\eta)=\pi$ but there is no geodesic in the
space joining them \cite[Example~II.9.23(2)]{Bridson1999}.
Therefore a more robust argument is needed.

First, a technical lemma about Euclidean space.

\begin{lemma}
Let $0<m\le M$ be given.  Then there is a continuous function $\sigma:[0,\pi)\to(0,\infty)$
such that $\sigma(0)=0$ and whenever $\triangle xyz$ is an isoceles triangle in Euclidean space,
with $d(x,y)=d(x,z)$ and $\triangle x'y'z'$ is another triangle satisfying
\[
	\frac{d(x,y)}{d(x',y')},
	\frac{d(x,z)}{d(x',z')},
		\textrm{ and }
	\frac{d(y,z)}{d(y',z')}
\]
lying between $m$ and $M$, then
\[
	\angle_{x'}(y',z')\le\sigma(\angle_{x}(y,z)).
\]
\end{lemma}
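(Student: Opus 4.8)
The plan is to reduce the statement to a single application of the Euclidean law of cosines together with the elementary inequality $\tfrac{a}{b}+\tfrac{b}{a}\ge 2$. First I would exploit scale-invariance: replacing $\triangle x'y'z'$ by a rescaled copy does not change the angle $\angle_{x'}(y',z')$, and if one simultaneously rescales $\triangle xyz$ by the same factor the three ratios are unchanged; hence one may assume $d(x,y)=d(x,z)=1$. Writing $\theta=\angle_x(y,z)\in[0,\pi)$, the base of this isosceles triangle is $d(y,z)=2\sin(\theta/2)$, so the hypotheses on the ratios give $d(x',y')\ge 1/M$, $d(x',z')\ge 1/M$, and $d(y',z')\le 2\sin(\theta/2)/m$.

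The key estimate is then obtained by setting $a=d(x',y')$, $b=d(x',z')$, $c=d(y',z')$ and computing
\[
	\cos\angle_{x'}(y',z')=\frac{a^2+b^2-c^2}{2ab}
		=\frac12\Big(\frac ab+\frac ba\Big)-\frac{c^2}{2ab}
		\ge 1-\frac{c^2}{2ab}
		\ge 1-\frac{2M^2}{m^2}\sin^2(\theta/2),
\]
where the last inequality uses the bounds on $a,b,c$ from the previous paragraph. Note this bound depends only on $\theta$ (and on $m,M$), not on the actual side lengths, which is exactly what the normalization was for.

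Finally I would define
\[
	\sigma(\theta)=\arccos\!\Big(\max\Big\{-1,\ 1-\tfrac{2M^2}{m^2}\sin^2(\theta/2)\Big\}\Big),
\]
and check that it has the required properties: it is continuous on $[0,\pi)$, $\sigma(0)=\arccos(1)=0$, and $\sigma(\theta)>0$ for $\theta\in(0,\pi)$ since then the argument of $\arccos$ is strictly less than $1$. Because $\arccos$ is decreasing and $\cos\angle_{x'}(y',z')\ge\max\{-1,\ 1-\tfrac{2M^2}{m^2}\sin^2(\theta/2)\}$ (using also the trivial bound $\cos\ge -1$), the displayed estimate yields $\angle_{x'}(y',z')\le\sigma(\theta)$.

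I do not expect a substantial obstacle here; the only points needing care are the homogeneity reduction, so that the bound is a function of $\theta$ alone, and the truncation at $-1$ so that $\arccos$ is defined throughout $[0,\pi)$ — the truncation is harmless since $\angle_{x'}(y',z')\le\pi$ in any case, and since $M\ge m$ the untruncated expression already falls below $-1$ before $\theta$ reaches $\pi$.
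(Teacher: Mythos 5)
Your proof is correct and follows essentially the same route as the paper: write $\cos\angle_{x'}(y',z')$ via the law of cosines, use $\tfrac ab+\tfrac ba\ge 2$, and bound $\tfrac{c^2}{2ab}$ by $\tfrac{M^2}{2m^2}\,(2\sin(\theta/2))^2$ using the ratio hypotheses, then take $\sigma=\arccos$ of that bound. Your truncation of the $\arccos$ argument at $-1$ is a harmless (indeed slightly cleaner) touch, since the paper's untruncated formula for $\sigma$ can leave the domain of $\arccos$ when $M>m$ and $\theta$ is close to $\pi$.
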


\begin{proof}
Begin by defining $D(\theta)$ to be the length of the third side of an isoceles triangle with
legs 1 and apex angle of measure $\theta$.  Define
$\sigma(\theta)=\arccos\left(1-\frac{M^2}{2m^2}D(\theta)^2\right)$.
This is continuous in $\theta$
with $\sigma(0)=0$.  Let $\triangle xyz$ and $\triangle x'y'z'$ be as in the statement
of the lemma.  Denote $t=d(x,y)=d(x,z)$ and $\theta=\angle_{x}(y,z)$.  Then $d(y,z)=D(\theta)t$.
Set also $A=d(x',y')$, $B=d(x',z')$, $C=d(y',z')$, and $\theta'=\angle_{x'}(y',z')$.
From the Law of Cosines and since $A^2+B^2\ge 2AB$ for all real numbers $A$ and $B$, we obtain
\begin{align*}
	\cos\theta'
		&=
	\frac{A^2+B^2}{2AB}-\frac{C^2}{2AB} \\
		&\ge
	1-\frac{M^2}{2m^2}D(\theta)^2 \\
		&=
	\cos\bigl(\sigma(\theta)\bigr).
\end{align*}
Since $\cos(\theta)$ is decreasing in $\theta$, we have $\theta'\le\sigma(\theta)$ as desired.
\end{proof}

\begin{lemma}[Small Angles Lemma]
\label{small angles lemma}
Let $X$ and $Y$ be CAT(0) spaces and $f:X\to Y$ be a continuous affine map.
For every $\epsilon>0$, there is a $\delta>0$ such that
whenever $x,y,z\in X$ are distinct and $\angle_{x}(y,z)<\delta$,
then $\angle_{f(x)}(f(y),f(z))<\epsilon$.  Furthermore,
for every $\epsilon>0$ there is a $\delta>0$ such that
whenever $x,y,z\in X$ are distinct and $\angle_{x}(y,z)<\delta$,
then $\bigl|\rho([x,y]) - \rho([x,z])\bigr|<\epsilon$.
\end{lemma}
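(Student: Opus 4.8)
The plan is to reduce both assertions to a comparison between the Euclidean comparison triangle of $\triangle xyz$ in $X$ and that of $\triangle f(x)f(y)f(z)$ in $Y$, invoking the preceding Euclidean lemma. The indispensable input is that the rescaling function is bounded: under our standing assumptions ($X$ proper and geodesically complete, $f$ injective so $\rho>0$), $\rho$ descends to a continuous strictly positive function on the compact space $\partial X$ by Lemma~\ref{extendrho}, hence attains a maximum $M$ and a positive minimum $m$ there; and every geodesic segment in $X$ inherits the $\rho$-value of a geodesic line extending it, so $m\le\rho\le M$ on all geodesics. Fix such $m,M$, and let $\sigma\colon[0,\pi)\to[0,\infty)$ be the continuous function with $\sigma(0)=0$ furnished by the previous Euclidean lemma for the bounds $1/M\le\,\cdot\,\le 1/m$.

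The first move is a reduction. Given distinct $x,y,z$ with $\angle_x(y,z)<\delta$, let $\alpha,\beta$ be the geodesics from $x$ to $y$ and $z$. For any $0<t\le\min\{d(x,y),d(x,z)\}$, replacing $y$ by $\alpha(t)$ and $z$ by $\beta(t)$ leaves unchanged the Alexandrov angle $\angle_x(y,z)$, the angle $\angle_{f(x)}(f(y),f(z))$ (the geodesic germs at $f(x)$ do not change), and the constants $\rho([x,y])$, $\rho([x,z])$ (these are the rescaling factors of the full geodesics $\alpha,\beta$, unaffected by passing to subsegments). Since $\overline{\angle}_x(\alpha(t),\beta(t))$ decreases to $\angle_x(y,z)<\delta$ as $t\to0$ by (\ref{angle function}), we may choose $t$ small enough that also $\overline{\angle}_x(y,z)<\delta$; and after discarding the trivial case in which $\alpha$ and $\beta$ agree up to parameter $\min\{d(x,y),d(x,z)\}$ (where one of $y,z$ lies on the geodesic to the other and both conclusions hold trivially), we may assume $\overline{\angle}_x(y,z)>0$. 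So from now on $d(x,y)=d(x,z)=t$ and $0<\overline{\angle}_x(y,z)<\delta$; write $\rho_1=\rho([x,y])$, $\rho_2=\rho([x,z])$, $\rho_3=\rho([y,z])$, and $\ell=d(y,z)$.

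For the first statement, compare $\overline{\triangle}xyz$ — isoceles with legs $t$ and apex angle $\overline{\angle}_x(y,z)$ — with $\overline{\triangle}f(x)f(y)f(z)$, whose side lengths are $\rho_1 t$, $\rho_2 t$, $\rho_3\ell$. The ratios of corresponding side lengths of the former to the latter are $1/\rho_1$, $1/\rho_2$, $1/\rho_3$, all in $[1/M,1/m]$, so the previous Euclidean lemma (with $\overline{\triangle}xyz$ as the isoceles triangle) gives $\overline{\angle}_{f(x)}(f(y),f(z))\le\sigma\bigl(\overline{\angle}_x(y,z)\bigr)$. Since $Y$ is CAT(0), $\angle_{f(x)}(f(y),f(z))\le\overline{\angle}_{f(x)}(f(y),f(z))\le\sigma\bigl(\overline{\angle}_x(y,z)\bigr)$; as $\sigma$ is continuous at $0$ with $\sigma(0)=0$, choosing $\delta$ small enough makes the right-hand side less than $\epsilon$.

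For the second statement, apply the Euclidean law of cosines in $\overline{\triangle}f(x)f(y)f(z)$: since $\cos\le 1$, we get $(\rho_3\ell)^2=(\rho_1 t)^2+(\rho_2 t)^2-2\rho_1\rho_2 t^2\cos\overline{\angle}_{f(x)}(f(y),f(z))\ge t^2(\rho_1-\rho_2)^2$, and, $\ell/t=2\sin(\overline{\angle}_x(y,z)/2)$ being the base-to-leg ratio of the isoceles triangle $\overline{\triangle}xyz$, this yields $|\rho_1-\rho_2|\le\rho_3\,\ell/t\le 2M\sin(\delta/2)<M\delta$. Taking $\delta<\epsilon/M$ — and small enough for the first statement — finishes the proof. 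The one genuinely delicate point is the reduction: the hypothesis only bounds the Alexandrov angle, which is merely the infimum of the comparison angles $\overline{\angle}_x(\alpha(t),\beta(t))$, so one must descend to a sufficiently small (triangle-dependent) scale $t$ to also control $\overline{\angle}_x(y,z)$; after that, everything is the Euclidean lemma and elementary trigonometry, with the uniform bounds on $\rho$ — which rest on properness, geodesic completeness, and injectivity of $f$ — making the side-ratio bounds independent of the triangle.
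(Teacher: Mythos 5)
Your proof is correct and follows essentially the same route as the paper: both reduce to the preceding Euclidean lemma via the uniform bounds $m\le\rho\le M$ (the first claim from $\sigma$ plus CAT(0) angle comparison, the second from the estimate $|\rho_1-\rho_2|\,t\le d(f(y),f(z))\le M\cdot 2t\sin(\overline{\angle}_x(y,z)/2)$). The only differences are cosmetic: you fix a small scale $t$ where the comparison angle is below $\delta$ instead of letting $t\to 0$ inside continuous functions, you use the law of cosines in place of the triangle inequality, and you state the side-ratio bounds in the direction $[1/M,1/m]$ that literally matches the Euclidean lemma's hypotheses (a point the paper glosses over).
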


\begin{proof}

Let $x,y,z\in X$ be distinct, and let $\sigma$ be the function constructed in the
previous lemma where $m$ and $M$ are the minimum and maximum of $\rho$.
Let $\alpha$ and $\beta$ denote the unit speed geodesics $[x,y]$ and $[x,z]$
with $\alpha(0)=\beta(0)=x$ and choose any $0<t<\min\{d_X(x,y),d_X(x,z)\}$.
Recall that $\overline{\angle}_{x}(y,z)$ denotes the angle of a Euclidean
comparison triangle at the vertex corresponding to $x$.  Observe that the ratios
\[
	\frac
		{d\bigl(f\alpha(t),f\beta(t)\bigr)}
		{d\bigl(\alpha(t),\beta(t)\bigr)},\;
	\frac
		{d\bigl(f(x),f\alpha(t)\bigr)}
		{d\bigl(x,\alpha(t)\bigr)},
	\textrm{ and }
	\frac
		{d\bigl(f(x),f\beta(t)\bigr)}
		{d\bigl(x,\beta(t)\bigr)}
\]
all lie between $m$ and $M$.  Therefore
\begin{align*}
	\angle_{f(x)}\bigl(f(y),f(z)\bigr)
&\le
	\overline{\angle}_{f(x)}\bigl(f\alpha(t),f\beta(t)\bigr) \\
&\le
	\sigma\Bigl(\overline{\angle}_{x}\bigl(\alpha(t),\beta(t)\bigr)\Bigr) \\
&\to
	\sigma\Bigl(\angle_{x}\bigl(y,z\bigr)\Bigr)
\end{align*}
as $t\to 0$.  Since $\sigma$ is continuous, this establishes the first part of the lemma.

Now consider the second part of the lemma.  Denote
$A=\rho([x,y])$ and $B=\rho([x,z])$.  Applying the triangle inequality
to the triple $(f\alpha(t)$, $f\beta(t)$, $f(x))$
for small $t>0$, we get
\begin{align*}
	\bigl|At-Bt\bigr|
&\le
	d\bigl(f\alpha(t),f\beta(t)\bigr) \\
&\le
	Md\bigl(\alpha(t),\beta(t)\bigr) \\
&=
	MtD(\overline{\angle}_x(\alpha(t),\beta(t)))
\end{align*}
where $D(\theta)$ is the contiuous function defined in the proof of the previous lemma.
Divide both sides by $t$ and let $t\to 0$ to get
\[
	\bigl|A-B\bigr|
\le
	MD(\angle_x(y,z)).
\]
\end{proof}

\begin{lemma}
\label{angles go to zero}
Assume $\angle_{Tits}(\zeta,\zeta')=\pi$, $x_0\in X$, and $\alpha$
a geodesic ray from $x_0$ to $\zeta$.  Then
\[
	\lim_{t\to\infty}\angle_{\alpha(t)}(x_0,\zeta')=0.
\]
\end{lemma}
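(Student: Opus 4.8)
The plan is to transfer the hypothesis $\angle_{Tits}(\zeta,\zeta')=\pi$, which is asymptotic in nature, into finite-scale angle estimates by examining the increasingly degenerate comparison triangles on $x_0$, $\alpha(t)$ and $\beta(t)$, where $\beta$ denotes the unit-speed ray from $x_0$ to $\zeta'$. By (\ref{titsangles}) the comparison angle $\psi_t:=\overline\angle_{x_0}(\alpha(t),\beta(t))$ is nondecreasing and $\psi_t\to\angle_{Tits}(\zeta,\zeta')=\pi$. Its comparison triangle is isoceles with legs of length $t$ and apex angle $\psi_t$, so the midpoint $\overline m_t$ of the side $[\overline{\alpha(t)},\overline{\beta(t)}]$ satisfies $d(\overline{x_0},\overline m_t)=t\cos(\psi_t/2)$ and $d(\overline{\alpha(t)},\overline m_t)=d(\overline{\beta(t)},\overline m_t)=t\sin(\psi_t/2)$. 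Writing $m_t$ for the midpoint of the geodesic $[\alpha(t),\beta(t)]$ in $X$, the CAT(0) inequality yields $d(x_0,m_t)\le t\cos(\psi_t/2)=o(t)$, while $d(\alpha(t),m_t)=d(\beta(t),m_t)=t\sin(\psi_t/2)=(1-o(1))t$.

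Next I would use that $m_t$ lies on $[\alpha(t),\beta(t)]$, so the direction at $\alpha(t)$ toward $m_t$ agrees with that toward $\beta(t)$, and similarly at $\beta(t)$. A law-of-cosines computation in the triangle $\triangle(\alpha(t),x_0,m_t)$ with the three distances above gives $\cos\overline\angle_{\alpha(t)}(x_0,m_t)\ge\sin(\psi_t/2)$, hence $\angle_{\alpha(t)}(x_0,\beta(t))=\angle_{\alpha(t)}(x_0,m_t)\le\overline\angle_{\alpha(t)}(x_0,m_t)\to0$; by the same computation with the roles of $\alpha(t)$ and $\beta(t)$ exchanged, $\angle_{\beta(t)}(x_0,\alpha(t))\to0$. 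Since $\beta$ is a geodesic through the interior point $\beta(t)$ we have $\angle_{\beta(t)}(x_0,\zeta')=\pi$, so the triangle inequality for Alexandrov angles in the space of directions at $\beta(t)$ gives $\angle_{\beta(t)}(\alpha(t),\zeta')\ge\pi-\angle_{\beta(t)}(x_0,\alpha(t))\to\pi$, i.e. $\angle_{\beta(t)}(\alpha(t),\zeta')\to\pi$.

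To finish, I would invoke the angle-sum bound for the triangle $\triangle(\alpha(t),\beta(t),\zeta')$ with ideal vertex $\zeta'$: $\angle_{\alpha(t)}(\beta(t),\zeta')+\angle_{\beta(t)}(\alpha(t),\zeta')\le\pi$. (This follows from the bound $\le\pi$ for finite triangles recalled in the introduction, applied to $\triangle(\alpha(t),\beta(t),\beta(s))$ for $s>t$: one drops the angle at $\beta(s)$, notes $\angle_{\beta(t)}(\alpha(t),\beta(s))=\angle_{\beta(t)}(\alpha(t),\zeta')$ since $\beta(s)$ lies on the ray from $\beta(t)$ to $\zeta'$, and lets $s\to\infty$ using lower semicontinuity of the Alexandrov angle.) Thus $\angle_{\alpha(t)}(\beta(t),\zeta')\le\pi-\angle_{\beta(t)}(\alpha(t),\zeta')\to0$, and the triangle inequality in the space of directions at $\alpha(t)$ gives $\angle_{\alpha(t)}(x_0,\zeta')\le\angle_{\alpha(t)}(x_0,\beta(t))+\angle_{\alpha(t)}(\beta(t),\zeta')\to0$, which is the claim. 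The crux — and the only genuinely delicate point — is the first step: converting the purely asymptotic statement $\psi_t\to\pi$ into usable finite-scale data. The midpoint $m_t$ does exactly this, because near-degeneracy of the comparison triangle simultaneously forces $m_t$ close to $x_0$ and far (comparable to $t$) from both $\alpha(t)$ and $\beta(t)$, and these two facts are what make all the subsequent angles collapse to $0$ and $\pi$.
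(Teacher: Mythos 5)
Your argument is correct, and it reaches the conclusion by a somewhat different route than the paper. The paper works with the triangles on $x_0$, $\alpha(t)$, $\beta(s)$ directly: since the comparison angle at $x_0$ tends to $\pi$, the sum $\angle_{\alpha(t)}(x_0,\beta(s))+\angle_{\beta(s)}(x_0,\alpha(t))$ is eventually small, and then $\beta(s)$ is replaced by $\zeta'$ using continuity of the angle at the fixed vertex $\alpha(t)$. You instead work at a single scale $t$ with the isoceles triangle on $x_0,\alpha(t),\beta(t)$, show its two base angles tend to $0$, upgrade this at $\beta(t)$ (where $\angle_{\beta(t)}(x_0,\zeta')=\pi$ exactly) to $\angle_{\beta(t)}(\alpha(t),\zeta')\to\pi$, and then use the ideal-triangle angle-sum bound $\angle_{\alpha(t)}(\beta(t),\zeta')+\angle_{\beta(t)}(\alpha(t),\zeta')\le\pi$ to force $\angle_{\alpha(t)}(\beta(t),\zeta')\to0$; the angle triangle inequality at $\alpha(t)$ finishes. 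Both proofs rest on the same two pillars, namely the comparison angle at $x_0$ tending to $\pi$ and a limiting step that brings in the ideal vertex, so the difference is mostly one of packaging; your version has the merit of isolating the reusable fact $\angle_x(y,\xi)+\angle_y(x,\xi)\le\pi$ for $\xi\in\partial X$, while the paper's is shorter. Note also that your midpoint computation is an unnecessary detour: the comparison triangle on $x_0,\alpha(t),\beta(t)$ is isoceles with apex angle $\psi_t$, so its base angles equal $(\pi-\psi_t)/2$, and $\angle_{\alpha(t)}(x_0,\beta(t))\le(\pi-\psi_t)/2\to0$ immediately (your bound $\arccos(\sin(\psi_t/2))$ is the same number).

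One caveat on the justification of the limit $s\to\infty$: ``lower semicontinuity of the Alexandrov angle'' is not a valid general principle. The Alexandrov angle is an infimum of comparison angles and is therefore \emph{upper} semicontinuous under convergence of geodesics; it is the Tits metric, not the angle, that is lower semicontinuous in the cone topology. What saves your step is that here the angles genuinely converge: writing $c$ for the ray from $\alpha(t)$ to $\zeta'$, the rays $\beta|_{[t,\infty)}$ and $c$ are asymptotic, so $d(\beta(s),c(s))$ stays bounded while $d(\alpha(t),\beta(s))$ and $d(\alpha(t),c(s))$ grow linearly; hence $\angle_{\alpha(t)}(\beta(s),\zeta')\le\overline{\angle}_{\alpha(t)}(\beta(s),c(s))\to0$, and the triangle inequality for angles at $\alpha(t)$ gives $\angle_{\alpha(t)}(\beta(t),\beta(s))\to\angle_{\alpha(t)}(\beta(t),\zeta')$. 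This is exactly the ``continuity of Alexandrov angles (with fixed basepoint)'' that the paper itself invokes; with that substitution your proof is complete.
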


\begin{proof}
Let $\beta$ be the geodesic ray emanating from $x_0$ going out to
$\zeta'$ and $\epsilon>0$ be fixed.  By \cite[Proposition~II.9.8(3)]{Bridson1999}
we know that when $s$ and $t$ are large enough,
\[
	\angle_{\alpha(t)}\bigl(x_0,\beta(s)\bigr)
		\le
	\angle_{\alpha(t)}\bigl(x_0,\beta(s)\bigr)
		+
	\angle_{\beta(s)}\bigl(x_0,\alpha(t)\bigr)
		\le
	\frac{\epsilon}{2}.
\]
On the other hand, by continuity of Alexandrov angles (with fixed
basepoint) we can increase $s$ even more (if necessary) to guarantee
\[
	\angle_{\alpha(t)}\bigl(x_0,\beta(s)\bigr)
		\ge
	\angle_{\alpha(t)}(x_0,\zeta')-\frac{\epsilon}{2}.
\]
Put the two together to get that
\[
	\angle_{\alpha(t)}(x_0,\zeta')\le\epsilon.
\]
\end{proof}

\begin{lemma}[Involutive Invariance]
\label{lemma:antipodes}
The rescaling function $\rho$ is constant on pairs of antipodes.
Specifically, whenever $d_{Tits}(\zeta,\zeta')\ge\pi$,
then $\rho(\zeta)=\rho(\zeta')$.
\end{lemma}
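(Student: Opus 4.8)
The plan is to feed the second assertion of the Small Angles Lemma (Lemma~\ref{small angles lemma}) a pair of directions at a suitable basepoint, one of which realizes $\rho(\zeta)$ and the other $\rho(\zeta')$, and whose angle we can force to be arbitrarily small. First, recall from the discussion of the Tits metric in \S\ref{subsec:tools} that $d_{Tits}(\zeta,\zeta')\ge\pi$ is equivalent to $\angle_{Tits}(\zeta,\zeta')=\pi$, so Lemma~\ref{angles go to zero} is available in every case and we need not separate out $d_{Tits}>\pi$. Fix a basepoint $x_0\in X$ and let $\alpha$ be the unit-speed geodesic ray from $x_0$ to $\zeta$. For each $t>0$ put $p_t=\alpha(t)$, and let $\eta_t$ be the unique geodesic ray emanating from $p_t$ going out to $\zeta'$. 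The segment $[p_t,x_0]$ is a subsegment of $\alpha$ (traversed backwards), so $f$ rescales it by $\rho(\alpha)=\rho(\zeta)$; and any initial subsegment of $\eta_t$ is rescaled by $\rho(\eta_t)$, which equals $\rho(\zeta')$ since $\eta_t$ is asymptotic to the ray from $x_0$ to $\zeta'$ (Lemma~\ref{extendrho}(1)).

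Next I would control the angle between these two directions at $p_t$. By Lemma~\ref{angles go to zero}, $\angle_{p_t}(x_0,\zeta')\to 0$ as $t\to\infty$. Pick any point $w_t=\eta_t(r)$ with $r$ large: since the geodesic from $p_t$ to $w_t$ is an initial segment of $\eta_t$, we have $\angle_{p_t}(x_0,w_t)=\angle_{p_t}(x_0,\zeta')$, independent of $r$, and this tends to $0$ with $t$. Now apply the second part of Lemma~\ref{small angles lemma}: given $\epsilon>0$ there is $\delta>0$ so that for distinct points $p_t,x_0,w_t$ with $\angle_{p_t}(x_0,w_t)<\delta$ one has $\bigl|\rho([p_t,x_0])-\rho([p_t,w_t])\bigr|<\epsilon$. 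Choosing $t$ large enough that the angle hypothesis holds, and then $r$ large enough that $w_t\neq x_0$, and using $\rho([p_t,x_0])=\rho(\zeta)$ and $\rho([p_t,w_t])=\rho(\eta_t)=\rho(\zeta')$, we get $|\rho(\zeta)-\rho(\zeta')|<\epsilon$. Since $\epsilon$ was arbitrary, $\rho(\zeta)=\rho(\zeta')$.

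The only delicate point I anticipate is purely bookkeeping: Lemma~\ref{small angles lemma} is stated for triples of genuine points of $X$, so one must legitimately replace the boundary point $\zeta'$ by a far-out genuine point $w_t$ on the ray $\eta_t$, note that this substitution leaves the relevant Alexandrov angle at $p_t$ unchanged, and verify that the rescaling constant of the finite segment $[p_t,w_t]$ is indeed the boundary value $\rho(\zeta')$ (which uses that subsegments of a ray have the same rescaling constant as the ray, together with Lemma~\ref{extendrho}(1)). Everything else is a direct invocation of Lemma~\ref{angles go to zero} and Lemma~\ref{small angles lemma}; no case analysis or quantitative estimate beyond these is required. (As a sanity check, when $d_{Tits}(\zeta,\zeta')>\pi$ the statement is immediate from \cite[Proposition~II.9.21(1)]{Bridson1999}, since then $\zeta$ and $\zeta'$ bound a geodesic line on which $f$ has a single rescaling constant; the argument above subsumes this.)
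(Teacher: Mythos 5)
Your proposal is correct and takes essentially the same route as the paper: the paper likewise takes rays $\beta_n$ from $\alpha(n)$ out to $\zeta'$, uses Lemma~\ref{angles go to zero} to make $\angle_{\alpha(n)}(\alpha(0),\beta_n(1))$ small, and then invokes the Small Angles Lemma (Lemma~\ref{small angles lemma}) to conclude $\rho(\zeta')=\rho(\beta_n)\to\rho(\alpha)=\rho(\zeta)$. Your bookkeeping remarks (replacing $\zeta'$ by a genuine point on the ray and noting subsegments inherit the rescaling constant) only make explicit what the paper leaves implicit.
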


\begin{proof}
Let $\alpha$ be a geodesic ray going out to $\zeta$.
For each $t>0$, let $\beta_n$ be the geodesic ray based
at $\alpha(n)$ going out to $\zeta'$.  
By Lemma~\ref{angles go to zero}, $\angle_{\alpha(n)}(\alpha(0),\beta_n(1))\to 0$,
so by Lemma~\ref{small angles lemma}, $\rho(\beta_n)$ converges to $\rho(\alpha)$
as $n\to\infty$.  Thus we establish that $\rho(\zeta')=\rho(\zeta)$.
\end{proof}

\RalfSubsection{Proof of Theorem~\ref{main}}
 Finally we address the problem of how to split affine maps.
Recall that a geodesically complete CAT(0) space $X$ is called \textit{irreducible} if it does not split as a product.
By \cite[Theorem~II.9.24]{Bridson1999},
this is equivalent to saying that the boundary does not split as a spherical join
in the Tits topology (if $X$ is geodesically complete).

Let $X$ and $Y$ be proper geodesically complete CAT(0) spaces such that $X$ admits a geometric group action.
In addition, let $f\colon X\to Y$ be a continuous affine map. By Lemma~\ref{prop:surjective}, we may assume $f$ is surjective. Recall that by Lemma~\ref{zerofibers} we may assume that $f$ is injective, since otherwise, $f$ will be a point map on a factor of $X$.

\begin{lemma}
\label{Linesplitting}
Let $X$ and $Y$ be CAT(0) spaces, where $X$ be irreducible and non-Euclidean. Let $f\colon X\times L \to Y$ be an injective affine map, where $L = \R$. Then $f$ splits. More precisely,
\begin{enumerate}
\item the image of $f$ splits as $Y'\times L'$, where $L'$ is a line and $Y'$ is a convex subset of $Y$,
\item for any $p \in L$, $f(X \times \{p\}) = Y' \times f(p)$, and
\item for any $x \in X$, $f(\{x\} \times L) = f(x) \times L'.$
\end{enumerate} 
\end{lemma}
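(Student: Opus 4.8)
The plan is to analyze how $f$ interacts with the $\R$-factor $L$ by looking at the rescaling function $\rho$ on the boundary $\partial(X\times L)=\partial X\ast\{L^+,L^-\}$, where $L^\pm$ are the two endpoints of the line $L$. First I would observe that the two points $L^+,L^-\in\partial(X\times L)$ are antipodal (indeed $d_{Tits}(L^+,L^-)=\pi$ since they are joined by the genuine geodesic line $\{x\}\times L$ for any $x$), so by Lemma~\ref{lemma:antipodes} we have $\rho(L^+)=\rho(L^-)=:c$, and since $f$ is injective we know $c>0$ (and $c$ is just the speed at which $f$ rescales every line parallel to $L$). The key claim to establish is that $\rho$ is \emph{constant equal to $c$} on all of $\partial(X\times L)$. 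Granting this, Lemma~\ref{extendrho}(4) tells us $f$ extends to a homeomorphism of compactifications, and then the restriction $f|_{\{x\}\times L}$ is a biinfinite geodesic in $Y$ with constant speed $c$, i.e.\ a line $L'_x$; similarly $f|_{X\times\{p\}}$ is an affine map of $X$ into $Y$. The product structure on the image should then fall out by showing the lines $L'_x$ are mutually parallel and the copies $f(X\times\{p\})$ are mutually parallel, using that asymptotic geodesics map to asymptotic geodesics (Lemma~\ref{extendrho}(2)) together with the flat-strip / Sandwich Lemma machinery as in Lemma~\ref{zerofibers}.

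To prove $\rho\equiv c$ on $\partial(X\times L)$: any point $\xi\in\partial(X\times L)$ other than $L^\pm$ lies on a unique Tits geodesic of length $\pi$ from $L^+$ to $L^-$ (this is exactly the spherical-join structure $\partial X\ast\{L^\pm\}$ from \cite[Theorem~II.9.24]{Bridson1999}), and this Tits geodesic is the boundary of an isometrically embedded flat half-plane (the product of $L$ with a ray in $X$), hence a \emph{genuine} geodesic configuration. So it suffices to show $\rho$ is constant along each such Tits geodesic from $L^+$ to $L^-$. Here I would use the Approximately Flat Sectors machinery (Corollary~\ref{approximately flat sectors}) together with the Small Angles Lemma (Lemma~\ref{small angles lemma}): moving a small Tits-distance $\theta$ along a Tits geodesic produces rays making small Alexandrov angle at a suitable basepoint, so $\rho$ changes by at most a function of $\theta$ tending to $0$; iterating along the geodesic of length $\pi$ (or, cleaner, using that the endpoints $L^\pm$ of this geodesic already have equal $\rho$-value and convexity-type estimates along the geodesic) forces $\rho$ to be constant. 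Actually the cleanest route is: for $\xi$ on the Tits geodesic from $L^+$ to $L^-$, the geodesic ray in $X\times L$ toward $\xi$ lies in a flat half-plane together with the ray toward $L^+$; on this flat piece $f$ is honest-to-goodness affine on a Euclidean sector, and affine maps of $\R^2$ preserving the origin are linear, so $\rho$ on that sector is the norm of a linear map, which is constant along the arc only if\ldots---so one needs the Small Angles input to rule out genuine variation, or rather to directly show $\rho(\xi)$ is squeezed between values forced by $L^\pm$.

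With $\rho\equiv c$ in hand, parts (1)--(3) are organized as follows. By Lemma~\ref{extendrho}(2), for any two points $x,x'\in X$ the lines $f(\{x\}\times L)$ and $f(\{x'\}\times L)$ have both pairs of endpoints asymptotic (since $(x,L^\pm)$ and $(x',L^\pm)$ are asymptotic rays in $X\times L$ after translating base points along $X$), hence by the Flat Strip Theorem they bound a flat strip; letting $x'$ vary over all of $X$ and using that $X$ is geodesically complete, the union of these lines is a convex subset of $Y$ isometric to $Y'\times L'$ for $Y'=f(X\times\{p_0\})$ and $L'$ a line, via the Sandwich Lemma exactly as in the proof of Lemma~\ref{zerofibers}. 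This gives the splitting of the image and, by tracking where each slice goes, the identifications in (2) and (3). I expect the main obstacle to be the constancy of $\rho$ on $\partial(X\times L)$ in the degenerate case where $d_{Tits}$ between a boundary point of $X$ and $L^+$ equals exactly $\pi/2$ for \emph{all} such points (which is automatic here since $\partial(X\times L)=\partial X\ast S^0$), and more precisely making the ``flat sector'' argument genuinely yield equality rather than just an inequality---this is where irreducibility and non-Euclideanness of $X$ must be invoked to prevent $f$ from behaving like a nontrivial linear map on a hidden Euclidean direction, paralleling the role these hypotheses play in the Main Theorem.
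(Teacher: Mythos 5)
Your strategy hinges on the claim that $\rho$ is constant (equal to $c=\rho(L^\pm)$) on all of $\partial(X\times L)$, and this claim is false. Take $Y=X\times\R$ and $f(x,t)=(x,2t)$: this is an injective continuous affine map which satisfies the conclusion of the lemma, yet $\rho\equiv 1$ on $\partial X\subset\partial(X\times L)$ while $\rho(L^\pm)=2$, so $\rho$ genuinely varies along the join arcs. The lemma does not assert that $f$ is a dilation on $X\times L$ --- only that it splits --- so no amount of flat-sector or small-angle estimation will deliver the constancy you want, and indeed your own sketch of that step trails off at exactly the point where an equality would have to be produced. A second, independent gap concerns statement (2), which is the real content of the lemma: you must show that each slice $X\times\{p\}$ lands in a single level set $Y'\times\{\mathrm{pt}\}$ rather than a sheared cross-section. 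Parallelism of the lines $f(\{x\}\times L)$ plus flat-strip/Sandwich arguments cannot see this: for $X=\R$ the map $(x,t)\mapsto(x,x+t)$ sends every line $\{x\}\times L$ to a vertical line but shears every slice, so the obstruction is exactly the possible presence of a nontrivial affine function on $X$, and you acknowledge that irreducibility and non-Euclideanness ``must be invoked'' without supplying the mechanism.

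For comparison, the paper's proof is short and avoids $\rho$-constancy altogether: the lines $f(\{x\}\times L)$ are mutually parallel by Lemma~\ref{extendrho}, so their union, which is all of $f(X\times L)$, splits as $Y'\times L'$ by \cite[Proposition~II.2.14(2)]{Bridson1999} (this is essentially your flat-strip picture for part (1), done in one stroke); then the projection $\pi_{L'}$ onto $L'$ is itself affine by \cite[Proposition~I.5.3(3)]{Bridson1999}, so $\pi_{L'}\circ f$ restricted to $X\times\{p\}$ is an affine real-valued function on an irreducible non-Euclidean CAT(0) space, hence constant by \cite[Lemma~4.1]{Lytchak-Schroeder}. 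That last citation is the missing ingredient in your proposal: it is precisely where irreducibility and non-Euclideanness are used, and without it (or an argument replacing it) parts (2) and (3) do not follow.
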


\begin{proof}  

Note that for all $x \in X$, $f(\{x\}\times L)$ is a geodesic, and for any two such $x_1, x_2 \in X$, these geodesics are parallel. From Lemma~\ref{extendrho} $f(\{x_1\}\times L)$ and $f(\{x_2\}\times L)$ are also parallel. By \cite[Proposition II.2.14(2)]{Bridson1999} the image $f(X\times L)$ splits, where the lines of the new splitting correspond to images of lines $f(\{x\} \times L)$. 

Let $\pi_{L'}$ be the projection from the image $f(X\times L)$ to $L'$. To show (2), we need to show that for any $p \in L$, the composition $\pi_{L'} \circ f$ is constant on $X\times \{p\}$. Note that by \cite[Proposition I.5.3(3)]{Bridson1999}, $\pi_{L'}$ is itself an affine map. Then $\pi_{L'}\circ f$ restricted to $X$ is an affine map from an irreducible non-Euclidean CAT(0) space to $\R$. By \cite[Lemma 4.1]{Lytchak-Schroeder}, since $X$ does not split, this function must be constant.

\end{proof}

 Suppose $X$ admits a splitting, and let $X_1$ and $X_2$ be two factors of the splitting, where $X_2$ is non-Euclidean. Choosing any line $L$ in $X_1$, apply Lemma~\ref{Linesplitting} to find that $f(L)$ is perpendicular to all of the image of $X_2$. If one of the factors $f(X_1)$ splits, then applying the same lemma to $f^{-1}$ tells us that $X_1$ itself must split. Thus the image of an irreducible factor of $X$ is an irreducible factor of $Y$, and hence the image of the Euclidean factor of $X$ must be the Euclidean factor of $Y$. 

It remains to show that, when restricted to a non-Euclidean irreducible factor, $f$ must be a dilation. We will do this by showing that otherwise, $X$ admits a splitting with two nontrivial factors.  Therefore, we will assume for the remainder of this subsection that $f$ is not a dilation.

 We will use Theorem~\ref{spatziers}.
Define $\Max\subset\partial X$ to be the subset on which $\rho$ attains its maximum.
Our goal is to prove that $\partial_{Tits}X$ splits as a spherical join with
$\Max$ as one of the factors.
Since $\rho$ is continuous, this set is closed in the cone topology.  It is involutive by
Lemma~\ref{lemma:antipodes}.  We only need

\begin{lemma}[$\Max$ is $\pi$-Convex]
Assume $\zeta,\eta\in\Max$ such that $d(\zeta,\eta)<\pi$, and let $\nu$ be the midpoint of the geodesic
$[\zeta,\eta]\subset\partial_{Tits}X$.  Then $\nu\in\Max$ as well.
\end{lemma}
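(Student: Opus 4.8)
I want to show that the midpoint $\nu$ of a short Tits geodesic $[\zeta,\eta]$ between two maximum points of $\rho$ is again a maximum point. The strategy is to exploit the Approximately Flat Sectors result (Corollary~\ref{approximately flat sectors}): since $\theta = d_{Tits}(\zeta,\eta) < \pi$, for any $\epsilon > 0$ I can choose a basepoint $p \in X$ so that the rays $\alpha, \beta$ from $p$ out to $\zeta$ and $\eta$ span a sector that is almost flat, in the sense that if $z_t$ is the midpoint of $[\alpha(t),\beta(t)]$, then $d(p,z_t) \ge t\cos(\theta/2) - \epsilon t$. The point $z_t$ lies on the geodesic from $p$ toward $\nu$ (up to a small error that vanishes as $t\to\infty$; more precisely, the rays $p\to z_t$ converge to the ray $p\to\nu$ as $t\to\infty$ because $\nu$ is the midpoint of the Tits geodesic and the sector is nearly flat). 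So the first step is to make this geometric picture precise: the segment $[p,z_t]$ has length comparable to $t\cos(\theta/2)$, and its far endpoint is near the direction $\nu$.

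\textbf{Key steps.} First I would set $M = \max \rho$ and let $m > 0$ be its minimum (positive since $f$ is injective). Fix $\epsilon > 0$ and choose $p$ as in Corollary~\ref{approximately flat sectors}. Next, I estimate $d\bigl(f(p), f(z_t)\bigr)$ from below using $f$ applied to the geodesic $[p,z_t]$: since $f$ rescales this geodesic by $\rho([p,z_t]) \le M$, we get $d(f(p),f(z_t)) = \rho([p,z_t])\cdot d(p,z_t)$. Separately, $f(z_t)$ is the midpoint of the image geodesic $f([\alpha(t),\beta(t)])$ — wait, $f$ maps midpoints to midpoints since it rescales geodesics uniformly — so $f(z_t)$ is the midpoint of $[f\alpha(t), f\beta(t)]$, whose endpoints satisfy $d(f(p), f\alpha(t)) = \rho(\alpha)\, t = M t$ and $d(f(p), f\beta(t)) = \rho(\beta)\, t = M t$ (using $\zeta,\eta \in \Max$), and $d(f\alpha(t), f\beta(t)) \le M\, d(\alpha(t),\beta(t))$. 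In the CAT(0) target $Y$, the distance from a vertex to the midpoint of the opposite side of a triangle with two sides of length $Mt$ is bounded above by the Euclidean comparison value, which is at most $\sqrt{(Mt)^2 - \frac{1}{4} d(f\alpha(t),f\beta(t))^2}$ — hmm, I need a lower bound on $d(f\alpha(t),f\beta(t))$ too, which I can get from the Tits angle $d_{Tits}(f\zeta, f\eta)$ if $f$ extends to the boundary, or more directly from the fact that $f$ restricted to the geodesic $[\alpha(t),\beta(t)]$ rescales by its own constant $\ge m$. Combining: $M\cdot d(p,z_t) = d(f(p),f(z_t)) \le$ (distance in $Y$ from $f(p)$ to midpoint of a triangle with equal sides $Mt$), and using $d(p,z_t) \ge t\cos(\theta/2) - \epsilon t$ forces the comparison triangle in $Y$ to be nearly flat, hence $d(f\alpha(t),f\beta(t))$ is nearly $2Mt\sin(\theta/2)$, which in turn forces $\rho$ of the segment $[\alpha(t),\beta(t)]$ near $M$, and then, taking $t\to\infty$, the midpoints $z_t$ approach $\nu$ and $\rho(\nu) \ge M - (\text{error})$. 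Let $\epsilon \to 0$ to conclude $\rho(\nu) = M$.

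\textbf{Main obstacle.} The delicate point is controlling the direction: I need the rays $[p, z_t]$ to converge (in the cone topology) to the ray from $p$ to $\nu$, so that $\rho([p,z_t]) \to \rho(\nu)$ by continuity of $\rho$ on $\partial X$ (Lemma~\ref{extendrho}(3)). This requires knowing that $z_t$ genuinely escapes to infinity in the direction of the Tits midpoint $\nu$ — which should follow because the comparison angle $\overline\angle_p(\alpha(t),\beta(t)) \to \theta$ and the sector is almost flat, so $[p,z_t]$ almost bisects the Alexandrov angle at $p$, and the bisecting ray of an almost-flat sector points at the Tits midpoint. Making "almost" uniform in $t$ and genuinely extracting the limit $\rho(\nu) = M$ while juggling the two error terms ($\epsilon$ from the source sector and the induced near-flatness error in $Y$) is the technical heart of the argument; everything else is comparison-triangle bookkeeping. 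I would organize it so that $\epsilon$ is chosen first, $p$ second, and all $t$-dependent quantities are estimated with constants depending only on $\epsilon$ and $\theta$, then send $t\to\infty$ and finally $\epsilon\to 0$.
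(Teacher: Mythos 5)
Your overall plan (use Corollary~\ref{approximately flat sectors}, compare distances from a basepoint to the midpoints $z_t$, let $t\to\infty$ so that $z_t\to\nu$, use continuity of $\rho$ from Lemma~\ref{extendrho}, then let $\epsilon\to0$) is the paper's architecture, but there is a genuine gap at the central step: you apply the almost-flat-sector corollary in the wrong space. To conclude $\rho(\nu)=M$ you need a \emph{lower} bound on $\rho([p,z_t])=d(f(p),f(z_t))/d(p,z_t)$ close to $M$, i.e. a lower bound on the image distance $d(f(p),f(z_t))$ together with an upper bound on the source distance $d(p,z_t)$. Choosing $p$ in $X$ so that $d(p,z_t)\ge t\cos(\theta/2)-\epsilon t$ gives a lower bound on the denominator, which points the wrong way, and nothing in your argument supplies the needed lower bound on the numerator: the CAT(0) condition in $Y$ only yields the comparison \emph{upper} bound $d(f(p),f(z_t))\le\bigl((Mt)^2-\tfrac14 d(f\alpha(t),f\beta(t))^2\bigr)^{1/2}$, and the only general lower bound on a median is the triangle inequality $Mt-\tfrac12 d(f\alpha(t),f\beta(t))$, which is attained in a tree and is far too weak. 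Your assertion that the near-flatness of the sector at $p$ in $X$ ``forces the comparison triangle in $Y$ to be nearly flat'' is precisely the point at issue: the image triangle is nearly flat only if $\rho([p,z_t])$ (equivalently $\rho([\alpha(t),\beta(t)])$) is already close to $M$, which is what you are trying to prove; making $\angle_p(\zeta,\eta)$ nearly realize the Tits angle gives no control from below on $\angle_{f(p)}(f(\zeta),f(\eta))$ (the Small Angles Lemma controls images of small angles, not large ones). All your inequalities remain consistent with $\rho(\nu)$ being small, so no contradiction and no bound $\rho(\nu)\ge M-o(1)$ comes out.

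The paper's proof reverses the roles of the two spaces. Normalize $\max\rho=1$, let $\zeta'=f(\zeta)$, $\eta'=f(\eta)$ (using Lemma~\ref{extendrho}(4) to extend $f$ to the boundary), set $\theta=d_{Tits}(\zeta,\eta)/2$ and $\theta'=d_{Tits}(\zeta',\eta')/2$, and apply Corollary~\ref{approximately flat sectors} in the \emph{target} $Y$ to choose $p'$ with $d(p',z'_t)\ge t\cos\theta'-\epsilon t$; this is exactly the lower bound on image distances that your version lacks. Pull back $p=f^{-1}(p')$ (no special choice in $X$ is needed): the CAT(0) comparison in $X$ gives $d(p,z_t)\le t\cos\bigl(\overline{\angle}_p(\alpha(t),\beta(t))/2\bigr)\le t\cos\theta+\epsilon t$ for $t$ large, by monotone convergence of comparison angles to the Tits angle. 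Hence $\rho([p,z_t])\ge(\cos\theta'-\epsilon)/(\cos\theta+\epsilon)$; since $z_t\to\nu$ and $\rho$ is continuous along converging geodesics, and since $\rho\le1$ forces $\theta'\le\theta$, letting $\epsilon\to0$ gives $\rho(\nu)\ge\cos\theta'/\cos\theta\ge1$. Your steps concerning $z_t\to\nu$ and continuity of $\rho$ are fine and coincide with the paper's; the missing idea is that the almost-flat-sector input must be produced in $Y$ and the comparison upper bound harvested in $X$, not the other way around.
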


\begin{proof}
Since $f$ is bijective, $\rho$ cannot attain zero.
So by rescaling the metric on $Y$ (if necessary), we may assume that the maximum
attained by $\rho$ is 1.  Let $\zeta'$ and $\eta'$ be the images of $\zeta$ and $\eta$
and fix $\epsilon>0$.
Set $\theta=d_{Tits}(\zeta,\eta)/2$, and $\theta'=d_{Tits}(\zeta',\eta')/2$.
By Corollary~\ref{approximately flat sectors} we may choose $p'$
such that the rays $\alpha'$ from $p'$ to $\zeta'$ and $\beta'$ from $p'$ to $\eta'$
satisfy the following: if $z_t'$ is the midpoint of $[\alpha'(t),\beta'(t)]$, then
$d(p',z_t')\ge t\cos(\theta')-t\epsilon$.

Now let $p$ be the preimage of $p'$ and $\alpha$ and $\beta$ be the preimages of $\alpha'$
and $\beta'$.  Note that $\alpha$ and $\beta$ determine the points $\zeta$ and $\eta$ at infinity.
Denote by $z_t$ the midpoint of $[\alpha(t),\beta(t)]$.  By construction, $f(z_t)=z_t'$.
By the Law of Cosines and comparison geometry and \ref{titsangles}, we may choose $t$ large enough
so that
\[
	d(p,z_t)
		\le
	t\cos\Bigl(\overline{\angle}_{p}\bigl(\alpha(t),\beta(t)\bigr)/2\Bigr)
		\le
	t\cos(\theta)+t\epsilon.
\]
%
Thus
\[
	\frac{d(p',z_t')}{d(p,z_t)}
		\ge
	\frac
		{ \cos(\theta')-\epsilon }
		{ \cos(\theta)+\epsilon }.
\]
As shown in the proof of \cite[Lemma~II.9.14]{Bridson1999},
$z_t$ converges to the midpoint $\nu$ of the Tits geodesic $[\zeta,\eta]$ as $t\to\infty$.
Letting $\epsilon\to 0$, we get $\rho(\nu)\ge\cos(\theta')/\cos(\theta)$.
Since we assumed $\rho\le 1$, we have $d(\alpha'(t),\beta'(t))\le d(\alpha(t),\beta(t))$.
So $\theta'\le\theta$, and $\cos\theta'\ge\cos\theta$.  Thus $\rho(\nu)=1$.
\end{proof}


\RalfSubsection{Self-Affine Maps}

Here we consider a self-affine map $f$ of a proper CAT(0) space $X$ admitting a geometric group action.
We will prove Corollary~\ref{thm-selfmaps} that $X$ is flat if $f$ is a strict contraction.
We first need to establish some technical lemmas.

\begin{lemma}
\label{le-wide_sequence}
Let $(X,d)$ be a complete, non-compact metric space.  Then there is a $\lambda>0$ and a sequence
$(x_n)\subset X$ such that for every $m\neq n$, $d(x_m,x_n)\ge\lambda$.
\end{lemma}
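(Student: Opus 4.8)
The plan is to exploit the classical characterization that a metric space is compact if and only if it is both complete and totally bounded. Since $(X,d)$ is assumed complete but not compact, it must fail to be totally bounded; concretely, there is some $\lambda>0$ for which no finite collection of open balls of radius $\lambda$ covers $X$. This $\lambda$ will serve as the constant in the statement.

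With such a $\lambda$ fixed, I would construct the sequence $(x_n)$ by induction. Choose $x_1\in X$ arbitrarily. Given $x_1,\dots,x_n$, the balls $B(x_1,\lambda),\dots,B(x_n,\lambda)$ do not cover $X$ by the choice of $\lambda$, so there is a point $x_{n+1}\in X\setminus\bigcup_{i=1}^{n}B(x_i,\lambda)$, and by construction $d(x_{n+1},x_i)\ge\lambda$ for every $i\le n$. Iterating yields a sequence with $d(x_m,x_n)\ge\lambda$ whenever $m\neq n$, as required.

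If one prefers to avoid invoking the compactness/total-boundedness theorem, the same conclusion can be reached contrapositively: assuming that for every $\lambda>0$ every $\lambda$-separated subset of $X$ is finite, one first shows that for each $n$ the space $X$ admits a finite cover by balls of radius $1/n$ (otherwise the inductive construction above produces an infinite $1/n$-separated set), and then a diagonal extraction shows that every sequence in $X$ has a Cauchy subsequence, which converges by completeness; hence $X$ is sequentially compact, contradicting non-compactness. Either way the argument is short.

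I do not expect a genuine obstacle here. The only point that needs care is the essential use of completeness: without it the statement is false, as witnessed by the open interval $(0,1)$, which is non-compact yet totally bounded, so that no $\lambda>0$ admits an infinite $\lambda$-separated sequence. Completeness is precisely what rules out such examples by forcing non-compactness to manifest as a failure of total boundedness.
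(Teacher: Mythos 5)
Your proof is correct, and it is cleaner than the one in the paper, though it runs along a genuinely different route. You reduce the statement to the classical theorem that a metric space is compact if and only if it is complete and totally bounded: completeness plus non-compactness forces a failure of total boundedness, which hands you a single $\lambda>0$ for which the greedy construction (pick $x_{n+1}$ outside $\bigcup_{i\le n}B(x_i,\lambda)$) immediately produces the $\lambda$-separated sequence. The paper instead argues from scratch: it starts from a sequence $(y_i)$ with no limit points (hence, by completeness, no Cauchy subsequence) and runs a rather involved inductive extraction, shrinking the separation constants $1/n_j$ and restricting to smaller balls when necessary, to show that if no uniform separation constant survives then a Cauchy subsequence of $(y_i)$ would emerge, a contradiction. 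Your second, contrapositive argument is essentially a streamlined version of that same idea, so you have in effect subsumed the paper's proof as a special case of your own. What your main argument buys is brevity and transparency, at the cost of invoking the compactness/total-boundedness characterization as a black box; what the paper's proof buys is self-containedness, constructing everything by hand from the bare definitions. Your closing remark about $(0,1)$ correctly pinpoints where completeness is used, which the paper leaves implicit in the step ``no limit points, thus no Cauchy subsequences.''
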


\begin{proof}

Since $X$ is not compact, there is a sequence $(y_i)$ with no limit points; thus it cannot have any Cauchy subsequences. As a set, $\{y_i\}$ must contain infinitely many points, since otherwise there would have to be infinitely many $y_i$ all equal to each other and this would give us a convergent subsequence. So we may assume that the $y_i$ are all distinct. Let $x_1 = y_1$. Let $n_1$ be the smallest natural number such that there is some $y_i$ such that $d(x_1, y_i) > 1/n_1$, and let $x_2$ be some such $y_i$. For each subsequent $j \ge 2$, suppose $x_j = y_l$. We will choose $n_j$ and $x_{j+1}$ as follows. Only consider the $y_i$ where $i$ is larger than $l$. If there exist $y_i$ (with $i > l$) such that $d(y_i, x_k) > 1/n_{j-1}$ for all $k \le j$, let $x_{j+1}$ be one such $y_i$, and let $n_{j} = n_{j-1}$. If no such $y_i$ exists, then there must be infinitely many $y_i$, with $i > l$, within the $(1/n_{j-1})$-ball of at least one $x_k$, $k \le j$. Let $z_j$ be one such $x_k$, and, for the rest of the construction, restrict to looking only at the $y_i$, with $i > l$, in this ball. Choose the next $n_{j}$ to be the smallest integer greater than $n_{j-1}$ satisfying $d(z_j, y_i) > 1/n_j$ for some $y_i$ in the $(1/n_{j-1})$-ball of $z_j$, and $x_{j+1}$ to be the $y_i$ of smallest index satisfying this inequality. Continue in this manner, creating an infinite sequence $(x_i)$. If the $n_j$ stabilize at some $N \in \N$, then we have our sequence with $\lambda = 1/N$. If not, then our procedure constructs a sequence $(z_m)$, a subsequence of $(x_i)$. For any $\epsilon > 0$, there is some $j$ such that $n_j > 2/\epsilon$. By construction, all $z_i$  with $i > j$ must be within $\epsilon/2$ of $z_j$, and thus any two are within $\epsilon$ of each other. Thus we have a Cauchy subsequence of $(y_i)$, which is a contradiction.

\end{proof}

Let $R\ge 0$ and $\lambda > 0$.  We say that a subset $\eS\subset\partial X$ is \textit{$(R,\lambda)$-wide} if there is an
$x\in X$ such that for every pair $\zeta,\eta\in\eS$, there is a $y\in B_R(x)$ for which
$\angle_{y}(\zeta,\eta)\ge\lambda$.
We will refer to $x$ as an \textit{$(R,\lambda)$-center} for $\eS$.  There is a bound on the cardinality of wide sets:

\begin{lemma}
Let $X$ be a cocompact proper CAT(0) space, $R \ge 0$, and $\lambda > 0$.  Then there is a bound on the
cardinality of $(R,\lambda)$-wide subsets of $\partial X$.
\end{lemma}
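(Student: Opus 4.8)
The plan is to bound the cardinality of an $(R,\lambda)$-wide set by a compactness/volume argument, reducing to a uniform bound that depends only on $R$, $\lambda$, and the geometry of the cocompact action.

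First I would reformulate the problem using the cocompactness of the action. Let $G$ act geometrically on $X$, with $D$ the diameter of a compact fundamental domain. Suppose $\eS = \{\zeta_1,\ldots,\zeta_N\}$ is $(R,\lambda)$-wide with center $x$. Translating by $G$, I may assume $x$ lies within distance $D$ of a fixed basepoint $x_0$; enlarging $R$ to $R+D$, I reduce to the case where every wide set has $x_0$ itself as an $(R',\lambda)$-center for $R' = R+D$. So it suffices to bound $N$ when for every pair $\zeta_i,\zeta_j$ there is $y \in B_{R'}(x_0)$ with $\angle_y(\zeta_i,\zeta_j)\ge\lambda$.

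Next I would pass from angles at various nearby points to a single quantitative separation at $x_0$. The key estimate: if $\angle_y(\zeta_i,\zeta_j)\ge\lambda$ for some $y\in B_{R'}(x_0)$, then the geodesic rays $\alpha_i,\alpha_j$ from $x_0$ to $\zeta_i,\zeta_j$ must diverge by a definite amount at a definite (bounded) time. Concretely, pick $T = T(R',\lambda)$ large enough; using comparison geometry and the monotonicity of comparison angles one shows $d(\alpha_i(T),\alpha_j(T)) \ge c$ for some constant $c = c(R',\lambda) > 0$: the point is that a positive angle at a point inside $B_{R'}(x_0)$ forces the two rays, as seen from $x_0$, to be non-asymptotic with a lower bound on their separation rate, quantified via the CAT(0) inequality applied to the comparison triangle with vertices $x_0$, $\alpha_i(T)$, $\alpha_j(T)$ (or via the fact that the Tits-angle seen from the far-away point $y$ forces $\overline{\angle}_{x_0}(\alpha_i(T),\alpha_j(T))$ to be bounded below once $T\gg R'$). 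Then the points $\alpha_1(T),\ldots,\alpha_N(T)$ all lie in the closed ball $\overline{B_T}(x_0)$, which is compact since $X$ is proper, and they are pairwise $c$-separated; hence $N$ is at most the maximal size of a $c$-separated subset of $\overline{B_T}(x_0)$, a finite number depending only on $R$, $\lambda$, and $X$. This proves the bound.

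The main obstacle I anticipate is the quantitative step: extracting, from "angle $\ge\lambda$ at some point within $B_{R'}(x_0)$," a uniform lower bound on $d(\alpha_i(T),\alpha_j(T))$ for a uniform finite time $T$. Naively one only knows the \emph{Tits} angle $\angle_{Tits}(\zeta_i,\zeta_j)\ge\lambda$, which controls $\lim_{t\to\infty}\overline{\angle}_{x_0}(\alpha_i(t),\alpha_j(t))$ but not its value at finite $t$. To circumvent this I would work directly at the witnessing point $y$: the rays from $y$ to $\zeta_i$ and $\zeta_j$ make angle $\ge\lambda$ at $y$, so by the law of cosines in comparison triangles their time-$s$ points are $\gtrsim \lambda s$ apart for moderate $s$; then I transfer this separation back to rays from $x_0$ using that $d(x_0,y)\le R'$ and the triangle inequality, choosing $s$ (hence the relevant $T$) large compared to $R'$ but fixed. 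Care is needed that the separation survives the basepoint change — this is where the choice $T = T(R',\lambda)$ must be made precise — but it is a routine comparison-geometry estimate once set up correctly, and Lemma~\ref{le-wide_sequence} (or rather its contrapositive) is not even needed here; the finiteness comes purely from properness plus the pairwise separation.
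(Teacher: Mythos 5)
Your proposal is correct and follows essentially the same route as the paper: the same key estimate (an angle $\ge\lambda$ at a witnessing point in the $R$-ball forces, via comparison triangles and the convexity/non-increase of the distance between asymptotic rays, a definite separation of the rays from the center at a definite radius $T(R,\lambda)$), followed by a finiteness-by-properness packing argument. The only difference is bookkeeping: you translate the center near a fixed basepoint at the outset and bound directly by packing a fixed compact ball, whereas the paper argues by contradiction with a sequence of ever-larger wide sets, translated centers, and a diagonal limit producing an infinite separated set in a sphere; both give the required uniform bound.
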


\begin{proof}
Let $\eS\subset\partial X$ be an $(R,\lambda)$-wide subset with $x\in X$ a center.
Let $\alpha,\beta$ be a pair of geodesic
rays emanating from $x$ going out to a pair of points in $\eS$ and set
\[
	K=\frac{2R+1}{\sqrt{2-2\cos \lambda}}.
\]
Let $y\in B_R(x)$ and $\alpha'$ and $\beta'$ be geodesic rays starting at $y$ asymptotic to $\alpha$ and $\beta$
such that $\angle_y(\alpha',\beta')\ge\lambda$.
By \cite[Proposition~II.1.7(5)]{Bridson1999}, $d(\alpha'(K),\beta'(K))$ is bounded below by the length $B$
of the base of an isosceles triangle with legs of length $K$ and apex angle $\lambda$.
Using the Law of Cosines, $B=2R+1$.
By convexity of metric, $\alpha'(K)$ and $\beta'(K)$ are a distance of at most $R$ away from $\alpha(K)$
and $\beta(K)$.  Thus $d(\alpha(K),\beta(K))\ge 1$.
Therefore there is a subset $\widehat{\eS}\subset S_K(x)$ with the same cardinality as $\eS$ such that the
distance between every pair of points is at least $1$.

Suppose now that there is a sequence of $(R,\lambda)$-wide subsets $\eS_n\subset\partial X$ such that
the cardinality of $\eS_n$ is at least $n$ with corresponding center $x_n\in X$.
Using cocompactness, and replacing $\widehat{\eS_n}$ by translates we may assume that $x_n\to x$.
Construct for each $\eS_n$ the corresponding set $\widehat{\eS_n}\subset S_K(x_n)$ as in the previous paragraph.
Choose $y_n^1\in\widehat{\eS_n}$, and let $\widehat{\eS_n^1}$ be the remaining $n-1$ points.
By properness of $X$, we may pass to a subsequence so that $y_n^1\to y^1\in S_K(x)$.
Next choose $y_n^2\in\widehat{\eS_n^1}$ for $n\ge 2$ and let $\widehat{\eS_n^2}$ be the remaining points.
Again, pass to a subsequence so that $y_n^2\to y^2\in S_K(x)$.
Note that $d(y^1,y^2)\ge 1$.  Continuing in this manner, for all $m$ we can find a $y_m\in S_K(x)$
such that $d(y^m,y^n)\ge 1$ for every $n\neq m$.  Thus we have found an infinite discrete subset
of $S_K(x)$, contradicting the assumption that $X$ is proper.
\end{proof}

Our strategy for the proof of Corollary~\ref{thm-selfmaps} will be to show that $\partial_{Tits}X$ is compact, and then apply the following theorem of Bosch\'{e}.

\begin{Thm}\cite[Propositions~3 and 7]{Bosche}
\label{bosche}
Let $X$ be a geodesically complete proper CAT(0) space admitting a geometric group action.  If the Tits
boundary $\partial_{Tits}X$ is compact, then $X$ is flat.
\end{Thm}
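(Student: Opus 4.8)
The plan is to show that the hypothesis forces $\partial_{Tits}X$ to be a single round sphere, and then to deduce flatness directly. Let $d$ be the geometric dimension of $\partial_{Tits}X$; this is finite by Kleiner's Theorem~C because $X$ admits a geometric action. Since the Tits topology on $\partial X$ is finer than the cone topology and $(\partial X,\textrm{cone})$ is compact Hausdorff ($X$ being proper), compactness of $\partial_{Tits}X$ makes the identity map $(\partial X,\textrm{Tits})\to(\partial X,\textrm{cone})$ a continuous bijection from a compact space to a Hausdorff space, hence a homeomorphism; so the two topologies on $\partial X$ agree. Combining this with Corollary~\ref{density of round spheres} — and recalling that the round sphere produced there via \cite[Theorem~C]{Kleiner1999} is top-dimensional and bounds a $(d+1)$-flat — we find that the $G$-orbit of a single top-dimensional round sphere $\Sigma_0=\partial F_0$ is dense in $\partial_{Tits}X$ for the Tits metric.

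Granting that this forces $\partial_{Tits}X=\Sigma_0$, here is how the argument concludes. As $\partial_{Tits}X$ is then a top-dimensional round sphere, Leeb's theorem \cite[Proposition~2.1]{Leeb2000} gives an isometrically embedded $(d+1)$-flat $F\subset X$ with $\partial F=\partial X$. Thus $F$ is a nonempty closed convex subset whose boundary is all of $\partial X$, and I claim $F=X$, so that $X\cong\E^{d+1}$ is flat. Suppose not, and pick $x\in X\setminus F$; set $c_0=\proj_F(x)\neq x$. Using geodesic completeness, extend the segment $[c_0,x]$ to a geodesic ray $\rho$ with $\rho(0)=c_0$. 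For every $c\in F$ the angle estimate for nearest-point projections gives $\angle_{c_0}(x,c)\ge\pi/2$, and since $\rho(t)$ lies on the ray from $c_0$ through $x$ we also get $\angle_{c_0}(\rho(t),c)\ge\pi/2$ for every $t>0$; hence $\proj_F(\rho(t))=c_0$, so $\rho(t)\notin F$ for $t>0$. On the other hand $\rho$ represents some $\eta\in\partial X=\partial F$, so there is a geodesic ray in $F$ from $c_0$ to $\eta$; geodesic rays from a fixed point to a fixed boundary point being unique in a CAT(0) space, that ray equals $\rho$, so $\rho\subset F$ — contradicting $\rho(1)\notin F$.

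The main obstacle is precisely the step just granted: upgrading ``the $G$-orbit of $\Sigma_0$ is Tits-dense in the compact space $\partial_{Tits}X$'' to ``$\partial_{Tits}X=\Sigma_0$''. Density of top-dimensional round spheres alone does not suffice — non-cocompact examples such as $\E^{d+1}\vee_{\mathrm{pt}}\E^{d+1}$ have a Tits-dense union of such spheres but a Tits-disconnected boundary — so the geometric action must be used again. First I would observe that $\partial_{Tits}X$ is Tits-connected: since the two topologies agree, any two cone-components of $\partial X$ lie at infinite Tits distance (a Tits geodesic between them would be a cone-path crossing clopen sets), so a disconnected $\partial X$ forces $X$ to be either two-ended, hence $\cong\E^1$ and we are done, or to have infinitely many ends, in which case one extracts an infinite Tits-discrete subset of $\partial_{Tits}X$, contradicting compactness. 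Given connectedness, I would apply the $\pi$-convergence theorem of Papasoglu--Swenson \cite{Papasoglu2009} to control the dynamics of $G$ on $\partial X$, and combine it with Kleiner's dimension theory (analyzing the link at a point lying on a top-dimensional round sphere) and the rigidity of round spheres — two distinct round $d$-spheres in a $d$-dimensional CAT(1) space cannot share an open set — to force all translates $g\Sigma_0$ to coincide, so that $\partial_{Tits}X$ is the single round sphere $\Sigma_0$. This last step is the part I expect to require the most work, and is presumably the content of Propositions~3 and~7 of \cite{Bosche}.
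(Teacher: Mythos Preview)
The paper does not prove this theorem; it is quoted as a black box from Bosch\'{e}~\cite{Bosche} (Propositions~3 and~7) and invoked without argument in the proof of Corollary~\ref{thm-selfmaps}. So there is no proof in the paper to compare your attempt against.

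As to the proposal on its own merits: the scaffolding is sound. The observation that compactness of $\partial_{Tits}X$ forces the Tits and cone topologies to coincide is correct; your argument that a flat $F$ with $\partial F=\partial X$ must equal all of $X$ (via the projection angle $\ge\pi/2$ and uniqueness of rays) is correct and standard; and your connectedness reduction through the $0$--$1$--$2$--$\infty$ end dichotomy for groups acting geometrically is fine. But you yourself identify the real content as the passage from ``the $G$-orbit of a top-dimensional round sphere is Tits-dense'' to ``$\partial_{Tits}X$ is a single round sphere,'' and you are candid that you have not carried this out --- you explicitly defer it to ``presumably the content of Propositions~3 and~7 of \cite{Bosche}.'' The heuristic you offer for that step ($\pi$-convergence plus link analysis plus a rigidity statement for overlapping top-dimensional round spheres in a CAT(1) space) is plausible in spirit but is not an argument. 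What you have written is a correct reduction that isolates the difficulty, not a proof; the substantive work is exactly what the paper outsources to Bosch\'{e}.
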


\begin{proof}[Proof of Corollary~\ref{thm-selfmaps}]
Suppose $\partial_{Tits}X$ is not compact. By Theorem~\ref{main}, we may pass to a factor of $X$, if necessary, so that $f$ is a dilation. By Lemma~\ref{le-wide_sequence} there is a sequence of points $(x_i)$ in $\partial X$ and a $\lambda > 0$ such that the Tits distance between any two points in the sequence is at least $\lambda$. Let $M$ be the maximal size of $(1,\lambda)$-wide sets of $\partial_{Tits} X$. Let $\eS$ be the first $M+1$ elements of $(x_i)$. Note that we may also require $\lambda < \pi$. In consequence, for any distinct points $\zeta, \nu \in \eS$, we have $\angle_{Tits}(\zeta,\nu) \ge \lambda$.

Form a finite subset $\eS'$ of $X$ by taking for every pair of distinct $\zeta,\nu \in\eS$ a point $y = y(\zeta, \nu) \in X$ such that $\angle_y(\zeta,\nu)\ge\lambda$.

Since $f$ is a dilation, it induces an isometry on $\partial_{Tits}X$.  In particular,
it preserves the distances between the points in $\eS$ while shrinking the diameter of $\eS'$.
Thus for large enough $k$, $f^k(\eS)$ is $(1,\lambda)$-wide, contradicting the assumption that $M$ was the maximal cardinality
of such sets.  Therefore $\partial_{Tits} X$ is compact and we may apply Theorem~\ref{bosche} to get the desired result.

\end{proof}

\begin{proof}[Proof of Corollary~\ref{thm-selfhomeo}]
By the Main Theorem, $X$  splits as a product $X=X_1\times ...\times X_n$
of irreducible factors  $X_i \neq \R$ such that the restrictions $f\mid _{X_i}: X_i \mapsto X$ are dilations.
Since $f$ is a homeomorphism, no rescaling constant can be zero and hence
no $f(X_i)$ is a point.  By uniqueness of the splitting and the Main Theorem, $f$ must interchange the factors.
If $f$ preserves the factors of $X$, then by Corollary~\ref{thm-selfmaps}, it must be an isometry.
This proves (1).  Statements (2) and (3) follow immediately from (1).  For (4), let $j$ be the induced homomorphism from the group of
affine homeomorphisms to the symmetric group of rank $n$.  Then $\ker j$ is a subgroup of isometries by (1).
\end{proof}
%

\begin{rem}
Observe that surjectivity is a necessary assumption since a tree may be dilated by constant $\alpha>1$ to a subtree.
\end{rem}
\bibliography{writeup-arxiv}{}
\bibliographystyle{siam}

\bigskip
\bigskip

\noindent Hanna Bennett\\
hbennett@math.utexas.edu

\bigskip

\noindent Christopher Mooney\\
christopher.mooney.math@gmail.com

\bigskip
\noindent Ralf Spatzier\\
spatzier@umich.edu

\end{document}